



\documentclass[3p]{elsarticle}
\oddsidemargin 0 cm
\evensidemargin 0 cm
\topmargin -0.5 cm
\textwidth 16.5 cm
\textheight 21.5 cm



\usepackage{amssymb}
 \usepackage{amsthm}
\usepackage{amsmath,bbm,dsfont, color}
\usepackage{srcltx}

\usepackage[OT2, T1]{fontenc}


\usepackage[cp1250]{inputenc}



\journal{arXiv}

\begin{document}

\newcommand*{\id}{\mathds{1}}
\newcommand*{\zahlen}{\mathbb{Z}}
\newcommand*{\en}{\mathbb{N}}
\newcommand*{\er}{\mathbb{R}}
\newcommand*{\FF}{\mathcal{F}}
\newcommand{\divv}{\mathop{\text{div}\,}}
\newcommand{\esssup}{\mathop{\text{ess\:sup}\,}}
\newcommand{\htimes}{\mathop{\text{\large$ďż˝$}}}
\newcommand{\hexists}{\mathop{\text{\LARGE$\exists$}}}
\newcommand{\hforall}{\mathop{\text{\LARGE$\forall$}}}
\newtheorem{lem}{Lemma}
\newtheorem{theo}{Theorem}
\newtheorem{cor}{Corollary}
\newtheorem{prop}{Proposition}
\newtheorem{mydef}{Definition}
\newtheorem{rem}{Remark}
\newtheorem{ass}{Assumption}

\begin{frontmatter}

 \title{Interior regularity of space derivatives to an evolutionary, symmetric $\varphi$-Laplacian}

\author{Jan Burczak} \address{Institute of Mathematics, Polish Academy of Sciences, \'Sniadeckich 8, 00-656 Warsaw, Poland.}

\author{Petr Kaplick\'y }  \address{Department of Mathematical Analysis, Faculty of Mathematics and Physics,
Charles University in Prague,
186 75 Praha 8,
Czech Republic
}



\begin{abstract}
We consider Orlicz-growth generalization to evolutionary $p$-Laplacian and to the evolutionary symmetric $p$-Laplacian. We derive the spatial second-order Caccioppoli estimate for a local weak solution to these systems. The result is new even for the $p$-case.
\end{abstract}

\begin{keyword}
evolutionary systems of PDEs, symmetric $p$-Laplacian, Orlicz growths, local (interior) regularity

\MSC[2010]  35K55, 35K59, 35K92, 35Q35, 35B65.



\end{keyword}




\end{frontmatter}






\newenvironment{rcases}
  {\left.\begin{aligned}}
  {\end{aligned}\right\rbrace}


%

%


\newcommand{\ep}{\varepsilon}
\newcommand{\eps} {\varepsilon}


\newcommand{\Div}       {{\rm div}_x}
\newcommand{\toc}       {{\stackrel{b}{\longrightarrow\,}}}

\newcommand{\A}     {{\mathcal A}}

\newcommand{\V}     {{\mathcal V}}
\newcommand{\DD}     {{\mathcal D}}

\newcommand{\D}     {{\rm D}}

\def\tens#1{\pmb{\mathsf{#1}}}
\def\vec#1{\boldsymbol{#1}}

\newcommand{\eN}	{{\mathcal{N}}} 
\newcommand{\M}	{{\mathcal{M}}} 
\newcommand{\eO}	{{\mathcal{O}}} 
\newcommand{\T}	{{\mathcal{T}}}
\newcommand{\W}	{\mathbb W} 
\newcommand{\ti}	{\tilde}
\newcommand{\wti}	{\widetilde} 
\newcommand{\vep}{\varepsilon}

\newcommand{\vdd}{\tens{\bar{v}}} 
\newcommand{\Ref} {\eqref}

\newcommand{\comentario}[1]{\textcolor{red}{\sc[*** #1 ***]}}





%

\pagestyle{headings} 

%


%

%

\newcommand{\Sym}{{\rm Sym}}

\newcommand{\vfi}{{\varphi}}
\newcommand{\Vfi}       {\bar \varphi}

 \def\Xint#1{\mathchoice
 {\XXint\displaystyle\textstyle{#1}}%
 {\XXint\textstyle\scriptstyle{#1}}%
 {\XXint\scriptstyle\scriptscriptstyle{#1}}%
 {\XXint\scriptscriptstyle\scriptscriptstyle{#1}}%
 \!\int}
 \def\XXint#1#2#3{{\setbox0=\hbox{$#1{#2#3}{\int}$}
 \vcenter{\hbox{$#2#3$}}\kern-.5\wd0}}
 \def\ddashint{\Xint=}
 \def\dashint{\Xint-}

\def\B{\mathbb{B}}
\def\test{\mathcal{D}}
\def\F{\mathbb{F}}
\def\N{\mathbb{N}}
\def\O{\Omega}
\def\R{\mathbb{R}}
\def\Kor{{\rm Kor}}
\def\pod#1{\mathop{#1}\limits}
\def\diagin{-\hskip-11.0truept\intop}
\def\diagint{{\raise-.1pt\hbox{--}\hskip-7.9pt\intop}}
\def\diagintop{\mathop{\mathchoice
{{\diagin}}%
{{\diagint}}%
{{\diagint}}%
{{\diagint}}%
}\limits}
 \def\ddashint{\Xint=}
 \def\dashint{\Xint-}

%
%
%
%

\section{Introduction} We derive interior (local) regularity of space derivatives to
\begin{equation}\label{eq:jb_structure:weak}
u,_{t}-\,\divv \A(\D u)=f, \tag{s$\A$L}
\end{equation}
where we have the symmetric gradient $\D u = \frac{1}{2} (\nabla u + \nabla^T u)$,
and to
\begin{equation}\label{eq:jb_structure:weakX}
u,_{t}-\,\divv \A(\nabla u)=f \tag{$\A$L}
\end{equation}
for tensors $\A$ having certain Orlicz-type growths that generalize $p$-growths. 
Our results are also new for the evolutionary $p$-case and our use of Orlicz growths is motivated primarily by elegance of this approach and by our need to derive results useful for planned interior $C^{1, \alpha}$ regularity in two-dimensional case. Let us explain the latter point. In order to use the method of Kaplick\'y, M\'alek, Star\'a \cite{KapMalSta02} for the intended $C^{1, \alpha}$ result, we need to derive regularity estimates for quadratic approximations of $p$-potentials uniformly in the approximation parameter, which is exactly where Orlicz-growth characteristics prove to be helpful. Beyond the pure analytic interest, the local estimates are important as building blocks for the non-linear Calder\'on-Zygmund theory. 
 
 In this paper we obtain local regularity results connected with the second-order energy estimates in space (i.e. testing with the localized Laplacian, roughly speaking).

\subsection*{Motivation and known results}
It would be crucial for the non-Newtonian hydrodynamics to repeat for the symmetric  $p$-Laplacian the $C^{1, \alpha}_{loc}$ regularity result, available for the full gradient $p$-Laplacian, since the works of Uhlenbeck \cite{Uhl77}, Tolksdorff \cite{Tol83} (stationary case) and DiBenedetto \cite{DiB93} (evolutionary case). Unfortunately, the pointwise structure of the symmetric  $p$-Laplacian seems to be resistant to  the methods used in the full $p$-Laplacian case\footnote{For more on this, see \cite{Burphd} and \cite{Bur14}} to get boundedness of gradients. 

Nevertheless, one can obtain certain regularity results for the symmetric  $p$-Laplacian. For the stationary case, Beir\~ao  da Veiga \cite{BdV} and  Beir\~ao  da Veiga \& Crispo \cite{daVCri13dcds} provide smoothness of a periodic-boundary value problem, provided $p$ is close to $2$. Results for generic boundary-value problems, developed for the full $p$-Navier-Stokes system, are of course available for the symmetric $p$-Laplacian. In particular, one has smoothness of solutions to basic initial-boundary value problems in $2$d case, see Kaplick\'y, M\'alek, Star\'a \cite{KapMalSta02} and  Kaplick\'y \cite{Kap05, Kap08} as well as existence of strong solutions for $3$d case, compare \cite{MalNecRuz01} by M\'alek, Ne\v cas and R\r u\v zi\v cka and also \cite{Vei09a, Vei09b} by Beir\~ao  da Veiga and \cite{VeiKapRuz11} by Beir\~ao  da Veiga,  Kaplick\'y and R\r u\v zi\v cka. Let us mention also here a recent regularity study for the symmetric $p$-Laplacian with homogenous Dirichlet boundary conditions by Frehse and Schwarzacher \cite{FreSchARXIV}. For small data regularity results, one may refer to Crispo \& Grisanti \cite{CriGri08}. 

As remarked, the regularity results cited above concern certain basic boundary-value problems. Local (interior) regularity results are much more scarce. In \cite{Bur14} the partial $C^{1, \alpha}_{loc}$ regularity theory has been developed. One should mention also \cite{FucSer} by Fuchs and Seregin 

On the other hand, the regularity results for the full-gradient case are abundant. For the evolutionary  $p$-case, let us restrict ourselves to referring to the classical monograph by DiBenedetto \cite{DiB93} and a simple proof of $C^{1, \alpha}_{loc}$ regularity by Gianazza, Surnachev \& Vespri \cite{GiaSurVes10}. For the stationary Orlicz case, see \cite{[DSV]} by Diening, Stroffollini \& Verde. 

\subsection*{Outline of the paper}
Next section involves needed preliminaries, including a short discussion of the used Orlicz-type spaces and notion of a weak solution. In the subsequent section we state our main results. The successive Section \ref{sec:aux} is devoted to certain auxiliary results. Section \ref{sec:prf} contains proof of our main results. Finally, we gather in the last Section \ref{jb_structure_orlicz} -- Appendix details of the used Orlicz growths. In its last subsection we recall the standard examples for Orlicz growths that are admissible in our main results.

\section{Preliminaries}

By $a \sim b$ we mean that $a$ and $b$ are equivalent up to a numerical constant, {\em i.e.} there exists $C> 0$ that $C^{-1} a \le b \le C a$. By $ Sym^{d \times d}$ we denote the set of symmetric ${d \times d}$ matrices. $\Omega$ is a bounded (space) domain in $\er^d$, since we deal with local solutions, any further description of $\Omega$ is immaterial. $B_r$ is a (space) ball of radius $r$. $I=[a, b]$ is a (time) interval. By  $b$ we will denote an interval of length $2r$. By $\Omega_I$  we mean $\Omega \times I$. A parabolic cylinder $Q_\rho$ is  $B_\rho \times I_{\rho^2}$. We introduce further notation when it is needed.
\subsection*{Orlicz growths}
For clarity, since we are still before stating our main results, let us introduce here only the essential definitions and assumptions. A more precise discussion of them can be found in Section \ref{jb_structure_orlicz} -- Appendix.

We use the standard definition of a $\eN$-function $\vfi$ and its conjugate  $\vfi^*$, see Subsection \ref{ssec:jb_structure_orlicz:N}.
Let us introduce
\begin{mydef}[good $\vfi'$ property] \label{def:jb_weak:goodphi'}
$\eN$-function $\vfi \in C^{2} ( (0, \infty) ) \cap C^1 ( [0, \infty))$ has { \emph{good $\vfi'$ property}} iff uniformly in $t$
\begin{equation}\label{eq:jb_weak:goodphi'}
 \varphi' (t)  \sim t \varphi'' (t).
\end{equation}
We denote the optimal constants in $\sim$ above by $G (\vfi')$.
\end{mydef}
An analogous class of $\eN$-functions  is considered in Diening \& Ettwein \cite{DieEtt08} or Diening \& Kaplick\'y \cite{DieKap13}, see Assumption 1 and Assumption 1.1 therein, respectively. Let us remark immediately that  \emph{good $\vfi'$ property} implies that he complementary function $\vfi^*$ also enjoys good $(\vfi^*)'$ property and that both $\vfi$ and $\vfi^*$ satisfy also $\Delta_2$ condition, see Proposition \ref{prop:jb_structure:'giveD}. From now on, we will work only with $\eN$-functions with \emph{good $\vfi'$ property}.

The connection between a $\eN$-function $\vfi $ and the tensor $\A$ is given by
\begin{ass}[Orlicz growth of  $\A$]\label{ass:jb_weak:growth_gen}
For an $\eN$-function $\vfi$ that has the good $\vfi'$ property, the tensor $\A$ satisfies for any $P, Q \in Sym^{d \times d}$
\begin{equation*}
\begin{aligned}
(\A(P) - \A(Q) ) \! : \! (P- Q) & \ge  c \vfi'' (|P| + |Q|) |P-Q|^2 \\
| \A(P) - \A(Q)|& \le C \vfi'' (|P| + |Q|) |P-Q|
\end{aligned}
\end{equation*}
with numerical constants $c, \, C$.
\end{ass}
The above assumption typically generalizes monotonicity and growth of tensor $\A$ from the polynomial to the Orlicz setting in the context of regularity theory, compare Diening \& Ettwein  \cite{DieEtt08}, Diening, Kaplick\'y \& Schwarzacher \cite{DieKapSch12}.

\subsection*{Orlicz-Lebesgue spaces}
In order to define  a weak solution, we will use the Orlicz-Lebesgue class $\Lambda^\vfi (\O)$. It includes all measurable functions $f\!: \, \O \to \er$ such that
\[
\int_\O \vfi (|f|) < \infty
\]
The Orlicz-Lebesgue space $L^\vfi (\O)$ consists of these measurable functions defined a.e. on $\O$ for which 
\begin{equation*}
|u|_\vfi := \sup_{\{ v \in \Lambda^{\vfi^*} (\O) | \,  \int_\O \vfi^* (|v|) \le 1 \}} \int_\O | u v|
\end{equation*}
is finite. Space  $L^\vfi (\O)$ is a Banach space with norm $| \cdot |_\vfi$ --- see Theorem 3.6.4 in \cite{KufJohFuc77}. One can define equivalent norm in $L^\vfi $ space without use of a complementary function, which is commonly referred to as the Luxemburg norm. 

Recall that if $\vfi$ satisfies $\Delta_2$ condition then $L^\vfi (\O)$ is separable and $\test (\O)$ is a dense set (in the norm topology). If also $\vfi^*$ satisfies $\Delta_2$ condition, then $L^\vfi (\O)$ is reflexive. Since we work here with growths that have good $\vfi'$ property, all these properties are valid for  $L^\vfi (\O)$ in our case via Proposition \ref{prop:jb_structure:'giveD} in Appendix.

\subsection*{Orlicz-Sobolev spaces}

For brevity, we will here use $S$ to denote either a domain being a subset of $\O$ or of $\O_I$, depending on the context. For all our domains we assume the segment property.
Let us define the Orlicz-Sobolev space as
\[
W^{1,\vfi}_x (S) = \{ u \in L^\vfi (S): \nabla u \in L^\vfi (S) \},
\]
the subscript $x$ above emphasizes that the space gradient $\nabla$ may be not the full gradient on $S$, which is the case for $S = \O_I$.
$W^{1,\vfi}_x (S)$ is a Banach space with the norm induced by the Orlicz-Lebesgue space $L^\vfi$, see Elmahi \& Meskine \cite{ElmMes05}, Section 2.5\footnote{Our $W^{1,\vfi}_x (Q)$ is $W^{1,x} L_\vfi (Q) = W^{1,x} E_\vfi (Q)$ of \cite{ElmMes05}, where the equality is valid since we work within $\Delta_2$ condition, see \cite{ElmMes05}, Section 2.5. Analogously, our $W^{1,\vfi}_x (\O)$ is theirs $W^{1} L_\vfi (\O) = W^{1} E_\vfi (\O)$.}.
We denote by $W^{1,\vfi}_{x, 0} (S)$ the norm closure of  $\DD (S)$ in $W^{1,\vfi}_x (S)$\footnote{Here, again, all topological ambiguities are disposed of by our assumptions. In particular, our $W^{1,\vfi}_{x,0} (S)$ coincides with  $W_0^{1,x} L_\vfi (S) = W_0^{1,x} E_\vfi (S)$ of \cite{ElmMes05}. The former is defined there as the weak closure and the latter as a norm closure of $\test (S)$. But for $S$ with the segment property the weak closure of  $\test (S)$ coincides with the closure in modular (see  \cite{ElmMes05}, Section 2.4 for $S = \Omega$ and Section 2.6 for $S = \Omega_I$), whereas the modular and norm convergence are equivalent for $\Delta_2$ growths.}. The dual space to  $W^{1,\vfi}_{x, 0} (S)$ is  denoted by $W^{-1,\vfi^*}_{x} (S)$ and its elements are representable with the distributional $L^2$-duality pairing
\[
\langle  f, \vfi \rangle_{W^{-1,\vfi^*},  W^{1,\vfi}_{0}}  = \int_Q f^\alpha \nabla^\alpha \vfi,
\] compare \cite{ElmMes05}, Sections 2.4, 2.6.

Finally, we denote by $W^{-1,\vfi^*}_{x} (Q) + L^2 (Q)$ the dual space to $W^{1,\vfi}_{x,0} (Q) \cap L^2 (Q)$. Its arbitrary element $f$ is representable as $ f = \sum_{ \,|\alpha| \le 1} f^\alpha + f^0$, where $ f^\alpha \in L^\vfi (Q)$ and $f^0 \in L^2 (Q)$,   under the distributional $L^2$-duality pairing, i.e. for any $\vfi \in W^{1,\vfi}_{x,0} (Q) \cap L^2 (Q)$ 
\begin{equation}\label{eq:modular}
\langle  f, \vfi \rangle_{W^{-1,\vfi^*} + L^2,  W^{1,\vfi}_{0} \cap L^2}  = \int_Q f^\alpha \nabla^\alpha \vfi + f_0 \vfi.
\end{equation}

\subsection*{Notion of a local weak solution}

We will use the following  notion of a (space-local) weak solution to  \eqref{eq:jb_structure:weak}.

\begin{mydef}\label{def:weak}
Take an interval $I=[a, b]$, a domain $\O \subset \er^d$ and a stress tensor $\A$ compatible with Assumption \ref{ass:jb_weak:growth_gen}. Let $f \in L^2 (\O_I)$. A function $u\in C(I;L^2(\Omega))$ with $\nabla u\in L^\varphi(\Omega_I)$ is called a local weak solution to the problem \eqref{eq:jb_structure:weak} iff  for any $t_1, t_2$ such that $(t_1, t_2) \subset I$  holds 
\begin{equation}\label{eq:jb_weak:e_ws}
\int_\O u (t ) \cdot w (t){ \Big|}^{t_2}_{t_1} + \int^{t_2}_{t_1} \int_\O -\, u \cdot w,_t + \, \A (\D u)  \D w =   \int^{t_2}_{t_1} \int_\O f w
\end{equation} 
for an arbitrary test function $w \; \in \; W^{1,1} (I; L^2 (\O)) \cap W^{1,\vfi}_{x,0} (\O_I) $.
\end{mydef}
Finiteness of the main part integral in \eqref{eq:jb_weak:e_ws} is assured by Assumption \ref{ass:jb_weak:growth_gen}, see Section \ref{jb_structure_orlicz}. The local weak solution for the full-gradient problem \eqref{eq:jb_structure:weakX} is defined analogously. 
\begin{rem}\label{rem1}
Observe that  $\O_I$ above may be smaller than the domain of existence of a weak solution to an initial-boundary value problem associated with the problem \eqref{eq:jb_structure:weak}.  The requirement that  $w$ has zero space-trace on $\partial \O$ gives us space-locality. Furthermore, in our main Theorem \ref{th1} we are interested in fact in space-time localization, hence there one may restrict to considering test functions vanishing on $\partial(\O_I)$. We allow for $w$ non vanishing on the  time-interval ends for the sake of  being able to choose $t_1 = a$ in \eqref{eq:jb_strong:2+_nabla'X}.
\end{rem}

\begin{rem}
 One can impose for $u$ merely $L^\infty_{loc}(I;L^2_{loc}(\Omega))$ regularity instead of the above  $C_{loc}(I;L^2_{loc}(\Omega))$ and then obtain continuity in time via the interpolation, see Lemma 3, p. 418 in \cite{ElmMes05}. 
\end{rem}

\subsection*{Existence of weak solutions to initial-boundary value problems}
For the existence of weak solutions to problem with full space gradient, we refer for instance to Elmahi \& Meskine \cite{ElmMes05}, Theorem 2\footnote{Their result concerns zero-Dirichlet boundary data and $f \in W^{-1,\vfi^*}_{x}$, but we will need more regularity of $f$ for our regularity results anyway.}. It gives additionally that $u_t$ is in the appropriate dual space and the energy equality. However, we decided to keep  less restrictive  notion of a weak solution, since it complies with existence results for very more general Orlicz and Musielak-Orlicz growths, in particular allowing for some anisotropy, compare \cite{SwG2014} by \'Swierczewska-Gwiazda and its references. These existence results can be straightforwardly rewritten for the symmetric-gradient case.


\section{Main results} Recall that a parabolic cylinder $Q_\rho$ is  $B_\rho \times I_{\rho^2}$. The $\V$ below is the \emph{square root tensor} given by Definition \ref{def:jb_weak:V}. We call a real function $g$ almost increasing iff there exists a number $C$ that for any $x \le y$ 
\[
g (x) \le C g(y).
\]
The result concerning \eqref{eq:jb_structure:weak} assumes additionally that $\vfi''$ is  almost increasing. For the full-gradient case this additional assumption is not necessary.

\subsection*{Result for the symmetric-gradient case  \eqref{eq:jb_structure:weak}}
\begin{theo}[spatial strong solutions for \eqref{eq:jb_structure:weak}]\label{lem:jb_strong:nabla_2+}\label{th1}
 If $\A$ satisfies Assumption  \ref{ass:jb_weak:growth_gen} with $\vfi''$ almost increasing, then a local weak solution $u$ to \eqref{eq:jb_structure:weak} on $\O_I$ enjoys
\[
\nabla u \in L_{loc}^\infty (I; L_{loc}^2 (\O)), \quad  \nabla \V (\D u) \in L_{loc}^2 (\O_I)
\]
with the following estimate 
  \begin{equation}\label{eq:jb_strong:2+_nabla}
\esssup_{\tau \in I_{r^2}} \int_{B_r} | \nabla u |^2 (\tau)+  \int_{Q_r}  | \nabla  \V( \D u) |^2 \le \frac{C ( G(\vfi'))(1+ \frac{1}{\vfi'' (\delta_0)})}{({R}-r)^2}   \int_{Q_{R}}   ( \vfi (|\nabla u|) + \vfi (\delta_0)  )+ C (R-r)^2 \int_{Q_R} | \nabla f|^2 
\end{equation}
for any $r < R$ and concentric parabolic cylinders $Q_r, Q_R \Subset \O_I$ and any $\delta_0 \ge 0$ such that $\vfi'' (\delta_0) > 0$ (there is always such $\delta_0$ available). 

We can also have a version of \eqref{eq:jb_strong:2+_nabla} without full gradient on the r.h.s., namely
  \begin{multline}\label{eq:jb_strong:2+_nablaK}
\esssup_{\tau \in I_{r^2}} \int_{B_r} | \nabla u |^2 (\tau)+  \int_{Q_r}  | \nabla  \V( \D u) |^2 \le \\
 \frac{C ( G(\vfi'))(1+ \frac{1}{\vfi'' (\delta_0)})}{({R}-r)^2}   \int_{Q_{R}}   ( \vfi (|\D u|) +  \vfi \left( \frac{|u - (u)|}{R} \right) + \vfi (\delta_0)  )+ C (R-r)^2 \int_{Q_R} | \nabla f|^2.
 \end{multline}
Moreover, in the case when $\vfi'' (0) > 0$, we have also
\[  \nabla^2 u \in L_{loc}^2 (\O_I) \]
and  can improve \eqref{eq:jb_strong:2+_nabla} and \eqref{eq:jb_strong:2+_nablaK}  to, respectively
  \begin{multline}\label{eq:jb_strong:2+_nabla'}
\esssup_{\tau \in I_{r^2}} \int_{B_r} | \nabla u |^2 (\tau)+  \int_{Q_r}  | \nabla  \V( \D u) |^2  + \vfi'' (0) \int_{Q_r}  | \nabla^2 u |^2 \le \\
  \left(1 + \frac{1}{\vfi'' (0)} \right)   \frac{C ( G(\vfi'))}{({R}-r)^2}   \int_{Q_{R}}    \vfi (|\nabla u|) + C (R-r)^2 \int_{Q_R} | \nabla f|^2  .
\end{multline}
  \begin{multline}\label{eq:jb_strong:2+_nabla'K}
\esssup_{\tau \in I_{r^2}} \int_{B_r} | \nabla u |^2 (\tau)+  \int_{Q_r}  | \nabla  \V( \D u) |^2  + \vfi'' (0) \int_{Q_r}  | \nabla^2 u |^2 \le \\
  \left(1 + \frac{1}{\vfi'' (0)} \right)   \frac{C ( G(\vfi'))}{({R}-r)^2}   \int_{Q_{R}}  \left(  \vfi (|\D u|) +  \vfi \left( \frac{|u - (u)|}{R} \right)  \right)+ C (R-r)^2 \int_{Q_R} | \nabla f|^2  .
\end{multline}
\end{theo}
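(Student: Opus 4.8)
The plan is to prove the estimate by testing the weak formulation with a localized second-difference quotient (or, where the regularity already permits, the localized Laplacian of $u$), so that the term $\divv \A(\D u)$ produces, after integration by parts, the coercive quantity controlling $\nabla \V(\D u)$. Concretely, I would fix concentric cylinders $Q_r \Subset Q_R \Subset \O_I$, choose a spatial cutoff $\eta \in \DD(B_R)$ with $\eta \equiv 1$ on $B_r$ and $|\nabla \eta| \lesssim (R-r)^{-1}$, together with a standard time cutoff, and use as test function (a mollified/Steklov-averaged version of) $w = -\Delta_{-h}(\eta^2 \Delta_h u)$, the difference quotient analogue of $\divv(\eta^2 \nabla u)$. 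This is legitimate because for the symmetric-gradient problem we only have $\nabla u \in L^\varphi$ a priori, so one must work with difference quotients in the space directions and pass to the limit $h \to 0$ at the end; the time-derivative term, after the integration by parts in time that the definition of weak solution permits (Remark \ref{rem1} lets us take $t_1 = a$), yields the term $\esssup_\tau \int_{B_r}|\nabla u|^2(\tau)$ plus a controllable remainder with $\eta,_t$.

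The key steps, in order: (1) insert the test function into \eqref{eq:jb_weak:e_ws} (in Steklov-averaged form) and rearrange so that the elliptic term becomes $\int \eta^2 (\A(\D u(x+h)) - \A(\D u(x))) : (\D u(x+h)-\D u(x)) \, dx \, dt$ plus cross terms carrying one factor of $\nabla \eta$; (2) apply the lower bound of Assumption \ref{ass:jb_weak:growth_gen} to the main term, which by the definition of the square-root tensor $\V$ (Definition \ref{def:jb_weak:V}) and the auxiliary results of Section \ref{sec:aux} controls $\int \eta^2 |\Delta_h \V(\D u)|^2$ from below, hence, after $h\to 0$, $\int_{Q_r}|\nabla \V(\D u)|^2$; (3) estimate the cross terms using the upper bound in Assumption \ref{ass:jb_weak:growth_gen}, Young's inequality in the Orlicz setting (using $G(\varphi')$ and the $\Delta_2$ property from Proposition \ref{prop:jb_structure:'giveD}), absorbing the $\eta^2$-weighted parts into the left side and leaving $\frac{C(G(\varphi'))}{(R-r)^2}\int_{Q_R}(\varphi(|\nabla u|)+\varphi(\delta_0))$; (4) handle the right-hand side $\int f w$ by integrating by parts once more to move a derivative onto $f$, producing $C(R-r)^2\int_{Q_R}|\nabla f|^2$ after Young; (5) deal with the time term as indicated to extract the $\esssup$; (6) deduce \eqref{eq:jb_strong:2+_nablaK} from \eqref{eq:jb_strong:2+_nabla} by the Orlicz Korn--Poincaré inequality (replacing $|\nabla u|$ on the right by $|\D u| + |u-(u)|/R$); and (7) in the nondegenerate case $\varphi''(0)>0$, observe that the coercivity constant is bounded below by $\varphi''(0)$ uniformly, so the same test function additionally controls $\varphi''(0)\int_{Q_r}|\nabla^2 u|^2$ — here for the symmetric-gradient case one needs a Korn-type inequality for second derivatives to pass from $\nabla \D u$ to $\nabla^2 u$, which is where the extra hypothesis that $\varphi''$ is almost increasing enters, letting one localize Korn's inequality against the Orlicz modulus.

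The main obstacle I expect is step (3)–(4) combined with the limit passage: one must estimate the Orlicz-weighted cross terms and the forcing term with constants that are \emph{uniform in the difference-quotient parameter $h$} and that depend on $\varphi$ only through $G(\varphi')$ and $1/\varphi''(\delta_0)$ (or $1/\varphi''(0)$), rather than through cruder quantities. This requires the "shift-change" lemmas for $\eN$-functions — controlling $\varphi''(|P|+|Q|)$ and the companion $\V$-quantities when $P,Q$ differ — presumably collected in Section \ref{sec:aux}, and careful bookkeeping so that the almost-monotonicity of $\varphi''$ is used only where genuinely needed (i.e. for the symmetric case and for the $\nabla^2 u$ bound). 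A secondary technical point is justifying that $w$ is an admissible test function in the class $W^{1,1}(I;L^2(\O)) \cap W^{1,\varphi}_{x,0}(\O_I)$: the difference quotient of $u$ has the right spatial integrability by Assumption \ref{ass:jb_weak:growth_gen}, but one should mollify in time (Steklov average) to get $W^{1,1}$ in time and only afterwards remove the mollification, which is routine but must be stated.
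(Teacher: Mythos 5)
Your overall strategy --- Steklov averaging, testing with a localized spatial difference, extracting coercivity via Assumption \ref{ass:jb_weak:growth_gen} and the square-root tensor $\V$, absorbing good parts into the left side, and using the Orlicz Korn inequality (Lemma \ref{lem:korn}) for \eqref{eq:jb_strong:2+_nablaK} --- is the same backbone the paper uses. But your step (3) glosses over the central technical obstacle, and your step (7) misattributes both the tool and the place where almost-increasingness of $\vfi''$ is actually used.

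The real difficulty in step (3) is that the cross term coming from $\nabla\eta$ cannot be estimated by a single Orlicz Young inequality and then absorbed. After writing $|\Delta^{le_i} u|$ as an averaged line integral of $|\nabla u\circ T_{\lambda e_i}|$ and applying the shifted Young inequality together with the shift-change lemma \eqref{eq:app:shifted:change}, the small-parameter part produces $\vfi_{|\D u\circ T_{\lambda e_i}|}(|\Delta^{\lambda e_i}\D u|)\sim |\Delta^{\lambda e_i}\V(\D u)|^2$ for \emph{intermediate} shifts $\lambda\in[0,l]$. These are not of the same form as the left-hand side $|\Delta^{le_i}\V(\D u)|^2$, so they cannot simply be ``absorbed.'' The paper disposes of them by a homogenization/averaging argument in the parameter $h$ (Step~3 of the proof, culminating in \eqref{eq:jb_strong:spatial:ini:gm?}--\eqref{eq:jb_strong:spatial:ini:gm?3'}): one averages the entire inequality over $l\in(0,h)$, uses Tonelli to bound the doubly averaged bad term by a singly averaged one, and then adds back the non-averaged inequality at $l=h$. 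Without this (or the longer Giaquinta--Modica alternative with growing supports), the estimate does not close. Relatedly, the paper uses cutoffs $\eta^k,\sigma^k$ with a \emph{large} integer power $k$ precisely so that, after Proposition~\ref{prop:jb_structure:polyGrowth} extracts $\eta^{(2k-1)q_1}\ge\eta^{2k}$ out of the shifted $\eN$-function, the supports on both sides match; with plain $\eta^2$ this step fails.

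Your step (7) is also off. There is no Korn inequality for second derivatives in play: the passage from $\nabla\D u$ to $\nabla^2 u$ is the pointwise algebraic bound $|\nabla^2 u|^2\le 3|\nabla\D u|^2$, used together with \eqref{eq:jb_structure:diffa3}, once $\vfi''(0)>0$. The almost-increasingness of $\vfi''$ enters in a different and earlier place: when estimating the shifted modular $\vfi_{|\D u\circ T_{\lambda e_i}|}(\,\cdot\,)$ one must enlarge the shift from $|\D u\circ T_{\lambda e_i}|$ to $|\nabla u\circ T_{\lambda e_i}|$ (so the final right-hand side reads $\vfi(|\nabla u|)$), and this uses Proposition~\ref{prop:jb_weak:shiftedO}, which is exactly where almost-increasingness is needed --- see the paper's footnote at that point. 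This explains why the assumption disappears for the full-gradient problem \eqref{eq:jb_structure:weakX}: there the shift is already $|\nabla u\circ T_{\lambda e_i}|$ and no change is required. Your formulation inverts that logic.

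Finally, a smaller point you should make explicit: after \eqref{eq:app:shifted:little2} one needs $|h|\le\rho_2-\rho_1$ to pull out the factor $|h|^2/(\rho_2-\rho_1)^2$, so the radii $\rho_1<\rho_2$ must be fixed before the limit $h\to0$ is taken, and the $\eps$-shrinkage of the reference cylinder has to be handled afterwards. This is a bookkeeping item, but it is why the paper proceeds in the particular order it does.
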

\subsection*{Result for the full-gradient case  \eqref{eq:jb_structure:weak}}
As already remarked, one can do better in  the full-gradient case and drop the assumption of almost-increasingness of $\vfi''$ in the full-gradient case  \eqref{eq:jb_structure:weakX}. Namely, we obtain
\begin{theo}[spatial strong solutions  for \eqref{eq:jb_structure:weakX}]\label{lem:jb_strong:nabla_2+}\label{th2}
 If $\A$ satisfies Assumption  \ref{ass:jb_weak:growth_gen}, then a local weak solution $u$ to \eqref{eq:jb_structure:weakX} on $\O_I$ satisfies the following estimate 
  \begin{equation}\label{eq:jb_strong:2+_nablaX}
\esssup_{\tau \in I_{r^2}} \int_{B_r} | \nabla u |^2 (\tau)+  \int_{Q_r}  | \nabla  \V( \nabla u) |^2 \le \frac{C ( G(\vfi')}{({R}-r)^2}   \int_{Q_{R}} (   \vfi (|\nabla u|) + |\nabla u|^2 )+ C |  I_{r^2} |  \int_{Q_R} | \nabla f|^2 
\end{equation}
for any $r < R$ and concentric parabolic cylinders $Q_r, Q_R \Subset \O_I$. The second term of r.h.s. of \eqref{eq:jb_strong:2+_nablaX}, if not controlled by the first (weak existence) term on r.h.s., can be estimated as follows 
  \begin{equation}\label{eq:jb_strong:2+_nabla'X}
\esssup_{\tau \in [t_1, t_2] } \int_{B_R} | \nabla u |^2 (\tau)+   \int_{t_1}^{t_2}  \int_{B_R}  | \nabla  \V( \nabla u) |^2 \le \int_{B_{R_0}} |\nabla u (t_0) |^2 + \frac{C ( G(\vfi')}{({R_0}-R)^2}  \int_{t_1}^{t_2}   \int_{B_{R_0}}  \vfi (|\nabla u|)  
\end{equation}
for any nonnegative  $t_1 \le t_2$ from $I$ (in particular,  $t_1 = a$, possibly initial datum of the associated initial-boundary value problem),   $R < R_0$ and concentric balls $B_R, B_{R_0} \Subset \O_I$. 

\end{theo}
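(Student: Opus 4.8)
The plan is to obtain \eqref{eq:jb_strong:2+_nablaX} by the classical \emph{difference-quotient} (or equivalently, localized-Laplacian) testing scheme, carried out in the Orlicz framework using Assumption \ref{ass:jb_weak:growth_gen}. Fix concentric cylinders $Q_r \Subset Q_R \Subset \O_I$ and a cutoff $\eta \in \DD(Q_R)$ with $\eta \equiv 1$ on $Q_r$, $|\nabla \eta| \le C/(R-r)$ and $|\eta_t| \le C/(R-r)^2$; let $\eta$ be independent of time near $\pm$. For a unit vector $e_s$ and small $h$, denote by $\tau_h$ the spatial shift and $\Delta_h v = h^{-1}(\tau_h v - v)$. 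I would test the weak formulation \eqref{eq:jb_weak:e_ws} (in its full-gradient analogue) with $w = -\Delta_{-h}(\eta^2 \Delta_h u)$, which is admissible after the usual time-mollification (Steklov averaging) to make $u_t$ legitimate; this is routine and I would only remark on it. The time term produces $\partial_t \|\eta \Delta_h u\|_{L^2(B_R)}^2$ up to a lower-order term involving $\eta \eta_t |\Delta_h u|^2$; the right-hand side $f$-term, after moving the difference quotient back onto $f$, yields $\int \nabla f \cdot (\eta^2 \ldots)$ controlled by $\epsilon \int \eta^2 |\nabla \Delta_h u|^2 + C_\epsilon (R-r)^2 \int |\nabla f|^2$ (the weight $|I_{r^2}|$ appears precisely from integrating the cutoff in time). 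The heart is the elliptic term $\int (\tau_h \A(\nabla u) - \A(\nabla u)) : \nabla(\eta^2 \Delta_h u)$.

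For that term I would split $\nabla(\eta^2 \Delta_h u) = \eta^2 \nabla \Delta_h u + 2\eta \nabla\eta \otimes \Delta_h u$. On the principal part $\eta^2 \nabla \Delta_h u = h^{-1}\eta^2(\tau_h \nabla u - \nabla u)$ I apply the lower bound in Assumption \ref{ass:jb_weak:growth_gen} with $P = \tau_h \nabla u(x)$, $Q = \nabla u(x)$: this gives a term bounded below by $c \int \eta^2 \vfi''(|\tau_h \nabla u| + |\nabla u|)\,|\Delta_h \nabla u|^2$, which is exactly $\sim \int \eta^2 |\Delta_h \V(\nabla u)|^2$ by the definition of the square-root tensor $\V$ (Definition \ref{def:jb_weak:V}) together with the good $\vfi'$ property — this is where $G(\vfi')$ enters. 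On the cross term $2\eta\nabla\eta\otimes\Delta_h u$ I use the upper bound $|\A(P)-\A(Q)| \le C\vfi''(|P|+|Q|)|P-Q|$, giving a factor $\vfi''(\cdots)|\Delta_h\nabla u|$ that I pair with $|\nabla\eta||\Delta_h u|$; by Young's inequality in the shifted-Orlicz pair (using that $t\mapsto \vfi''(t)t^2 \sim \vfi(t)$, a consequence of the good $\vfi'$ property shown in the Appendix) this is absorbed as $\epsilon\int\eta^2\vfi''(\cdots)|\Delta_h\nabla u|^2 + C_\epsilon\int|\nabla\eta|^2\vfi''(|\tau_h\nabla u|+|\nabla u|)|\Delta_h u|^2$, and the last quantity is controlled by $\int|\nabla\eta|^2(\vfi(|\nabla u|)+|\nabla u|^2)$ after recognizing $\vfi''(|\tau_h\nabla u|)|\Delta_h u|^2 \lesssim \vfi(\tau_h|\nabla u|) + |\tau_h \nabla u|^2$ pointwise (the $|\nabla u|^2$ summand is what forces the $|\nabla u|^2$ term on the right of \eqref{eq:jb_strong:2+_nablaX}; in the symmetric case one instead leans on $\vfi''$ being almost increasing to avoid it, but here we do not need to). Choosing $\epsilon$ small absorbs the good terms to the left.

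At this point I have, uniformly in $h$, a bound of the form $\esssup_\tau \int_{B_r}|\eta\Delta_h u|^2(\tau) + \int_{Q_r}\eta^2|\Delta_h\V(\nabla u)|^2 \le \frac{C(G(\vfi'))}{(R-r)^2}\int_{Q_R}(\vfi(|\nabla u|)+|\nabla u|^2) + C|I_{r^2}|\int_{Q_R}|\nabla f|^2$; letting $h\to 0$ and invoking the standard difference-quotient characterization (the right side being finite and $h$-independent forces $\Delta_h u \to \nabla u$ in $L^2_{loc}$ and $\Delta_h\V(\nabla u)\to\nabla\V(\nabla u)$ weakly in $L^2_{loc}$, with lower semicontinuity of the norms) yields \eqref{eq:jb_strong:2+_nablaX}. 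For \eqref{eq:jb_strong:2+_nabla'X} one runs the same computation but discards the $\eta_t$ term by taking $\eta$ independent of time on $[t_1,t_2]\times B_{R_0}$ and keeping the boundary contribution of the time term at the left endpoint, which is $\int_{B_{R_0}}|\nabla u(t_0)|^2$; since there is no parabolic scaling tying the time length to the radius, the $f$-term can simply be kept on the right or, as stated, one reports the estimate in the regime where it is not the dominant term. I expect the main obstacle to be the careful bookkeeping of the shifted-Orlicz Young inequality and the pointwise equivalences $\vfi''(t)t^2\sim\vfi(t)\sim\vfi''(|a|+|b|)|a-b|^2 + (\text{monotone remainder})$ — i.e., transferring the $p$-growth algebraic identities (of the type used by DiBenedetto or Uhlenbeck) to the Orlicz setting with constants depending only on $G(\vfi')$ — all of which is supplied by the Appendix but must be applied with care to keep every constant uniform; the PDE/testing part itself is standard once admissibility of the test function is justified.
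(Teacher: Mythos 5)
Your overall scheme (difference quotients, Steklov time mollification, Caccioppoli testing) agrees in spirit with the paper's. Where you diverge — and where the gap is — is in the treatment of the elliptic cross term $\int \Delta_h\A(\nabla u):2\eta\nabla\eta\otimes\Delta_h u$. You bound $|\Delta_h\A(\nabla u)|\lesssim\vfi''(|\tau_h\nabla u|+|\nabla u|)|\Delta_h\nabla u|$ and then apply \emph{weighted Cauchy--Schwarz with weight $\vfi''(\cdot)$}, producing the leftover
$$
C_\epsilon\int|\nabla\eta|^2\,\vfi''\bigl(|\tau_h\nabla u|+|\nabla u|\bigr)\,|\Delta_h u|^2,
$$
which you then claim is controlled pointwise by $\vfi(|\nabla u|)+|\nabla u|^2$. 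This step is false in general. The obstruction is that the argument of $\vfi''$ sits at the two endpoints of the segment, while $|\Delta_h u|$ averages $\nabla u$ over the \emph{whole} segment; so the estimate you need is of the form $\vfi''(c)\,t^2\lesssim\vfi(c)+\vfi(t)+t^2$ with $c$ and $t$ unrelated. For $\vfi(s)=s^p$ with $1<p<2$ (so $\vfi''(s)\sim s^{p-2}$ is unbounded near $0$), take $c\to0$, $t=1$: the left side blows up like $c^{p-2}$ while the right side stays bounded. The inequality $\vfi''(c)t^2\lesssim\vfi(c)+\vfi(t)$ holds only when $\vfi''$ is almost increasing — but the full-gradient Theorem~\ref{th2} is stated precisely without that hypothesis, and that removal is the whole point of the separate statement. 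So as written, your proof covers only the superquadratic regime and does not establish the claimed result.

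The paper avoids this trap by never converting to a weighted quadratic. Instead it encodes the nonlinearity via shifted $\eN$-functions: $|\Delta^s\A(\nabla u)|\lesssim\vfi'_{|\nabla u|}(|\Delta^s\nabla u|)$ (equation \eqref{eq:app:shifted:eq}), applies \emph{Young's inequality for shifted $\eN$-functions} (Lemma~\ref{lem:jb_weak:goodphi} via Corollary~\ref{cor:jb_weak:goodphi_shifted}), so the $C_\delta$-term is $\vfi_{|\nabla u|}(|\nabla\eta||\Delta^s u|)$ — a shifted modular, not a weighted square — which is then reduced to $\tfrac{|h|^2}{(\rho_2-\rho_1)^2}\vfi(|\nabla u|)$ by \eqref{eq:app:shifted:little2}. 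The $\delta$-term produced by the same Young's inequality is $\vfi_{|\nabla u|}(|\Delta^{\lambda e_i}\nabla u|)\sim|\Delta^{\lambda e_i}\V(\nabla u)|^2$ at the intermediate shifts $\lambda\in[0,l]$; these cannot be absorbed into the left-hand side directly because the shift does not match, which is why the paper needs the telescoping identity \eqref{eq:strong:spatial:eff} and the \emph{homogenization step} (averaging in $h$, Step~3) to close the estimate. Your proposal omits the intermediate-shift bookkeeping and the homogenization entirely, and also leans on shifting $|\D u|\to|\nabla u|$ in the argument of the shifted $\eN$-function — which is trivial in the full-gradient case but is the very reason the paper states Theorem~\ref{th2} separately. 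Finally, in the paper the $|\nabla u|^2$ on the right of \eqref{eq:jb_strong:2+_nablaX} does not come from the elliptic cross term at all; it comes from the $\eta\eta_t|\Delta_h u|^2$ term (the quantity $A$ in \eqref{eq:jb_strong:spatial:ini}, bounded in \eqref{eq:jb_strong:spatial:ev1}), and the cross term is controlled by $\vfi(|\nabla u|)$ alone.
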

\section{Auxiliary results}\label{sec:aux}
In this section we provide a starting point for the proof of Theorems \ref{th1}, \ref{th2} via difference quotients, namely the problem reformulation that contains distributional time derivatives of a weak solution. The technicalities here are taken from \cite{ElmMes05}.

For brevity, we will use $S$ to denote either a domain being a subset of $\O$ or of $\O_I$, depending on the context.

Let us introduce the following notation

\[S^\delta := \{ s \in S | \; {\rm dist}(s, \partial S) \ge \delta \}\]
Here and in what follows we will denote also $[a, b - h_0]$ by $I_0$. Moreover
\[T_s g (x,t) := g (s +x,t),\]
\[  \Delta_h g (x,t) :=  g (x, t+h) - g(x, t), \qquad \Delta^{s} g (x,t) :=  T_s g (x,t) - g (x, t).\] 

Now let us introduce Steklov averages. In this context, one can additionally consult introduction of \cite{DiB93}, Naumann,  Wolf \&   Wolff \cite{NauWolWol98} or Chapter II, \textsection 4 of \cite{LadSolUra68} by Ladyzhenskaya, Solonnikov \& Ural'tseva. 
\begin{mydef}
Fix interval $I = (a, b)$. For $g \in L^1 ( \Omega_I )$, $h \in (0, b)$ its {'Steklov average'} $f_h$ is
\begin{equation}
g_h (t,x):= \left\{\begin{aligned}
&\dashint_{t}^{t+h} g (\tau, x) d \tau &\text{ for } t \in (a, b-h),\\
&0 &\text{ otherwise. }  \end{aligned}\right.
 \end{equation}
\end{mydef}
The Steklov averages have good mollification properties, presented below.
\begin{lem}\label{lem:jb_weak:stekl_prop}
Take $v \in L^\vfi (\O_I)$. Then
\begin{equation}\label{eq:jb_weak:stekl_prop:int}
\int_{\O_I}  \vfi(|v_h|)  \le \int_{\O_I}  \vfi( |v|), \qquad v_h \stackrel{h \to 0^+}{\longrightarrow} v \quad \text{ in } L^\vfi ({\O_I} ).
\end{equation}
The restriction of  $v_h$ to $(a, b - h) $ has the weak time derivative, for which
\begin{equation}\label{eq:jb_weak:stekl_prop:der}
(v_h),_t (x,t) = h^{-1}  \Delta_h v (x,t)
 \end{equation}
a.e. in $(a, b - h) \times \O$.\newline
Take $v \in C (I; L^2 (\Omega) )$.  At any $t \in (a, b)$
 \begin{equation}\label{eq:jb_weak:stekl_prop:c}
v_h (t)  \stackrel{h \to 0^+}{\longrightarrow} v (t) \quad \text{ in } L^2 (\Omega).
 \end{equation}

\end{lem}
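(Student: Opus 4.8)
The plan is to establish the four assertions --- the modular contraction \eqref{eq:jb_weak:stekl_prop:int}, the $L^\vfi$-convergence in \eqref{eq:jb_weak:stekl_prop:int}, the formula \eqref{eq:jb_weak:stekl_prop:der} for the weak time derivative, and the pointwise $L^2$-continuity \eqref{eq:jb_weak:stekl_prop:c} --- separately; each is a classical property of Steklov averages, and the arguments follow \cite{ElmMes05} (see also the introduction of \cite{DiB93}).

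For the modular contraction I would fix $x$ and $t\in(a,b-h)$ and apply first the triangle inequality together with monotonicity of $\vfi$ to get $\vfi(|v_h(t,x)|)\le\vfi\big(\dashint_t^{t+h}|v(\tau,x)|\,d\tau\big)$, then Jensen's inequality for the convex function $\vfi$ against the normalized measure $h^{-1}d\tau$ on $(t,t+h)$, obtaining $\vfi(|v_h(t,x)|)\le\dashint_t^{t+h}\vfi(|v(\tau,x)|)\,d\tau$. Since $v_h\equiv0$ on $\O\times(b-h,b)$, integrating over $\O\times(a,b-h)$, interchanging the order of integration (Tonelli), and using that for each fixed $\tau$ the set $\{t\in(a,b-h):\tau\in(t,t+h)\}$ has length at most $h$, yields \eqref{eq:jb_weak:stekl_prop:int}.

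For the $L^\vfi$-convergence I would first recall that, because $\vfi$ has the good $\vfi'$ property, both $\vfi$ and $\vfi^*$ satisfy $\Delta_2$ (Proposition \ref{prop:jb_structure:'giveD}), so that modular and norm convergence in $L^\vfi(\O_I)$ are equivalent and $\test(\O_I)$ is dense; hence it suffices to show $\int_{\O_I}\vfi(|v-v_h|)\to0$. Given $\varepsilon>0$, I would choose $\psi\in\test(\O_I)$ with $\int_{\O_I}\vfi(|v-\psi|)<\varepsilon$, split on $\O\times(a,b-h)$ via $\Delta_2$ as $\vfi(|v-v_h|)\le K\big(\vfi(|v-\psi|)+\vfi(|\psi-\psi_h|)+\vfi(|(\psi-v)_h|)\big)$, bound the first and --- through the already proved \eqref{eq:jb_weak:stekl_prop:int} --- the third term by $K\varepsilon$, and note that the middle term tends to $0$ since $\psi$ is uniformly continuous and $|\O_I|<\infty$, so $\psi_h\to\psi$ uniformly. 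Finally, on the time-boundary layer $\O\times(b-h,b)$, where $v_h$ vanishes, $\int_{\O\times(b-h,b)}\vfi(|v-v_h|)=\int_{\O\times(b-h,b)}\vfi(|v|)\to0$ by absolute continuity of the Lebesgue integral, since $\vfi(|v|)\in L^1(\O_I)$; letting $h\to0^+$ and then $\varepsilon\to0$ concludes.

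For the weak time derivative, since $L^\vfi(\O_I)\hookrightarrow L^1(\O_I)$ on the finite-measure set $\O_I$, the function $h^{-1}\Delta_h v$ lies in $L^1((a,b-h)\times\O)$; testing $v_h$ against $\psi\in\test((a,b-h)\times\O)$, using Fubini in the time variables (after extending $\psi$ by zero in $t$) and the substitution $\tau\mapsto\tau+h$ identifies $\int v_h\,\psi_{,t}$ with $-\int h^{-1}\Delta_h v\,\psi$, which is \eqref{eq:jb_weak:stekl_prop:der}. For the pointwise $L^2$-continuity, for $v\in C(I;L^2(\O))$ and $t\in(a,b)$ fixed, choosing $h<b-t$ makes $v_h(t)=\dashint_t^{t+h}v(\tau)\,d\tau$ a Bochner integral in $L^2(\O)$, so that $\|v_h(t)-v(t)\|_{L^2(\O)}\le\dashint_t^{t+h}\|v(\tau)-v(t)\|_{L^2(\O)}\,d\tau\le\sup_{\tau\in[t,t+h]}\|v(\tau)-v(t)\|_{L^2(\O)}$, which tends to $0$ as $h\to0^+$ by continuity of $v$, giving \eqref{eq:jb_weak:stekl_prop:c}. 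The only step calling for genuine care is the $L^\vfi$-convergence, where the $\Delta_2$-based equivalence of modular and norm topologies, density of smooth functions, and the separate treatment of the time-boundary layer must be combined; everything else reduces to Jensen's inequality, Fubini's theorem and the elementary estimate for Bochner integrals.
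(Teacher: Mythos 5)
Your proposal is correct and takes essentially the same route as the paper: for \eqref{eq:jb_weak:stekl_prop:der} and \eqref{eq:jb_weak:stekl_prop:c} the paper refers to Lemma 3.2 of \cite{DiB93} and pp.~240--241 of \cite{NauWolWol98}, and for \eqref{eq:jb_weak:stekl_prop:int} it states (without detail) that the inequality follows from Jensen with Tonelli and the convergence from a $\Delta_2$-based density argument. Your write-up simply supplies the details the paper leaves to the references, including the small but necessary point of handling the time-boundary layer $\O\times(b-h,b)$ by absolute continuity of $\int\vfi(|v|)$.
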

\begin{proof}
This Lemma is a combination of  Lemma 3.2 in  \cite{DiB93}, Chapter I.3-(i) and results of pages 240 -- 241 of  \cite{NauWolWol98} with an exception of \eqref{eq:jb_weak:stekl_prop:int}. There, the inequality holds by the Jensen inequality with the~Tonelli theorem and the convergence by a density argument, available thanks to $\Delta_2$-growth of $\vfi$.
\end{proof}
Now we use Steklov averages to reformulate the notion of the local weak solution.
Choose in \eqref{eq:jb_weak:e_ws} $t_1 := t, t_2 := t+h$ and a test function $v$,  which is time-independent  on interval $(t, t+h)$, to obtain
\[
\int_\O \Delta_h u (t ) \cdot v + \int_\O  \int^{t+h}_{t}  \A (\D u)\!:\! \D v = \int_\O  \int^{t+h}_{t}  f  v
\]
Multiplication of the above formula by $h^{-1}$ and \eqref{eq:jb_weak:stekl_prop:der} provide us with the following identity for a local weak solution to \eqref{eq:jb_structure:weak}, that has a time derivative. Namely, for any $t, h$ such that $a \le t < t+h \le b$
\begin{equation}\label{eq:jb_weak:steklov_ws} 
\int_\O (u_h (t )),_t  \cdot \,v  + (\A (\D u))_h (t) : \D v  =  \int_\O  f_h  v.
\end{equation} 
Observe that terms in   \eqref{eq:jb_weak:steklov_ws} are meaningful  for any $v  \in   W^{1,\vfi}_{0}(\O) \cap L^2 (\O) $.
Integration in time yields 
\begin{equation}\label{eq:jb_weak:steklov_ws2} 
\int_{t_1}^{t_2} \int_{\O} (u_h ),_t  \cdot \,w  + (\A (\D u))_h  : \D w  = \int_{t_1}^{t_2} \int_\O  f_h  w
\end{equation} 
for any $w \in   W^{1,\vfi}_{x,0}(\O_I) \cap L^2 (\O_I) $ (via a density argument again) and any $t_1, t_2, h$ such that $a \le t_1 < t_2+h \le b$. This allowed us to widen the admissible class of test functions over these with no time derivatives. 

Recall that we denote $[a, b - h_0]$ by $I_0$. We are thus prepared to show

\begin{lem}\label{lem:distr} 
Let us now fix a small $h_0 >0 $. A local weak solution $u$ of \eqref{eq:jb_structure:weak} on $\O_I$  has a distributional time derivative  $u_t\in { W^{-1,\vfi^*}_{x} (\O_{I_0})}$ that enjoys
\[
 | u_t  |_{ (W^{-1,\vfi^*}_{x} + L^2)(\O_{I_0})} \le C (G (\vfi'))  \int_{\O_{I}} \left( 1 +\vfi(|\D u|) + |f|^2 \right).
 \]
and it satisfies on $\O_{I_0}$
\[
\langle  u_t, w \rangle_{{ (W^{-1,\vfi^*}_{x} + L^2)(\O_{I_0})} ,\,  W^{1,\vfi}_{x,0}(\O_{I_0}) \cap L^2 (\O_{I_0}) } +  \int_{\Omega_{I_0} }   \A ( \D u ) \D w  =   \int_{\Omega_{I_0} }  f w
\]
for any $w \in   W^{1,\vfi}_{x,0}(\O_{I_0}) \cap L^2 (\O_{I_0}) $.
\begin{proof} 
Let us consider next $h \le h_0$. 
Formula \eqref{eq:jb_weak:steklov_ws2} gives
\begin{multline}
\sup_{|w|_{W^{1,\vfi}_{x,0}(\O_{I_0}) \cap L^2 ((\O_{I_0})} \le 1} \left|  \int_{\O_{I_0}} \, (u_h),_t \cdot w \right|   \le \sup_{|w|_{W^{1,\vfi}_{x,0}(\O_{I_0}) \cap L^2 ((\O_{I_0})} \le 1} \left|    \int_{\O_{I_0}}  (\A (\D u))_h  \D w - f_h w \right| \le \\   C (G(\vfi'))  \int_{\O_I} \left(\vfi (|\D u| ) + 1 + |f|^2 \right).
\end{multline}
For the last inequality we used the Fubini theorem, Assumption \ref{ass:jb_weak:growth_gen} and Lemma \ref{lem:jb_weak:goodphi}. The above inequality and separability of $W^{1,\vfi}_{x,0}(\O_{I_0})$ (by definition, it is a  the norm closure of  $\DD (S)$) implies that there exists a $g \in 
{ W^{-1,\vfi^*}_{x} (\O_{I_0}) }$, being the sequential $*$-weak limit of a subsequence $(u_{h_n}),_t $. By l.w.s.c., it satisfies
\begin{equation}\label{eq:Ina}
 | g |_{ W^{-1,\vfi^*}_{x} (\O_{I_0}) } \le C (G (\vfi'))  \int_{\O_I} \left( 1 +\vfi(|\D u|)+ |f|^2   \right) .
\end{equation}
For $w \in \DD (\O_{I_0})$ we see via  \eqref{eq:jb_weak:steklov_ws2}  that
\begin{equation}\label{eq:Inb}
\langle (u_{h_n} ),_t , w \rangle_{W_x^{-1,\vfi^*} + L^2,\,  W_{x,0}^{1,\vfi} \cap L^2 } =  \int_{\O_{I_0}} (u_{h_n} ),_t  \cdot \,w  = \int_{\O_{I_0}} -\, (u)_{h_n}  \cdot w,_t ,
\end{equation}
where the l.h.s. converges to $\langle g, w \rangle_{W_x^{-1,\vfi^*},\,  W_{x,0}^{1,\vfi} } $, whereas r.h.s. goes to $  \int_{\O_{I_0}} -\,u  \cdot w,_t $. It means that by definition $g = u_t$ in 
$\DD' (\O_{I_0})$. Since by equation   $\int_{\O_{I_0}} -\,u  \cdot w,_t = - \int_{\O_{I_0} }   \A ( \D u ) \D w + f w$, we get from the limit of \eqref{eq:Inb}
\[
\langle u_t , w \rangle_{W_x^{-1,\vfi^*} + L^2 ,\,  W_{x,0}^{1,\vfi} \cap L^2} + \int_{\O_{I_0} }   \A ( \D u ) \D w = \int_{\O_{I_0} } f w
\]
for any $w \in   (W^{1,\vfi}_{x,0} \cap L^2) (\O_{I_0})$.
 \end{proof}
\end{lem}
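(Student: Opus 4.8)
The plan is to exploit the Steklov-averaged identity \eqref{eq:jb_weak:steklov_ws2} as a family of equations that are well-behaved in time (the Steklov averages $u_h$ possess an honest weak time derivative by \eqref{eq:jb_weak:stekl_prop:der}), derive a uniform-in-$h$ bound on $(u_h)_{,t}$ in the dual space $(W^{-1,\vfi^*}_x + L^2)(\O_{I_0})$, extract a weak-$*$ limit, and finally identify that limit with the distributional derivative $u_t$ by testing against smooth compactly supported functions and passing to the limit. The uniform bound is obtained by taking the supremum of \eqref{eq:jb_weak:steklov_ws2} over test functions $w$ with $|w|_{W^{1,\vfi}_{x,0}(\O_{I_0})\cap L^2(\O_{I_0})}\le 1$: moving $(u_h)_{,t}$ alone to one side, the right-hand side is $\int_{\O_{I_0}}(\A(\D u))_h\,\D w - f_h w$, which by Hölder's inequality in the Orlicz--Lebesgue duality and the $L^2$-duality is bounded by $C\big(|\A(\D u)|_{\vfi^*}\text{-type quantity} + |f|_{L^2}\big)$. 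Here one uses the growth half of Assumption \ref{ass:jb_weak:growth_gen} (with $Q=0$) to control $|\A(\D u)|$ by $\vfi''(|\D u|)|\D u|\sim\vfi'(|\D u|)$ via the good $\vfi'$ property, and Lemma \ref{lem:jb_weak:goodphi} (conjugation/Young-type estimates) to pass from $\vfi'(|\D u|)$ measured in $L^{\vfi^*}$ back to $\int\vfi(|\D u|)$; the contraction property \eqref{eq:jb_weak:stekl_prop:int} of Steklov averaging then removes the $h$-dependence, producing exactly the claimed bound $C(G(\vfi'))\int_{\O_I}(1+\vfi(|\D u|)+|f|^2)$.

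Next I would invoke separability of $W^{1,\vfi}_{x,0}(\O_{I_0})\cap L^2(\O_{I_0})$ — which holds because $\vfi$ (and $\vfi^*$) satisfy $\Delta_2$ by Proposition \ref{prop:jb_structure:'giveD}, so both $L^\vfi$ and $L^2$ are separable — to select a sequence $h_n\to 0^+$ along which $(u_{h_n})_{,t}$ converges weakly-$*$ in the dual space to some $g$; lower semicontinuity of the dual norm under weak-$*$ limits transfers the uniform bound to $g$. To identify $g$: for $w\in\DD(\O_{I_0})$ the pairing $\langle(u_{h_n})_{,t},w\rangle$ equals $-\int_{\O_{I_0}}(u)_{h_n}\cdot w_{,t}$ by integration by parts in $t$ (legitimate since $w$ vanishes near the temporal boundary of $I_0$ and $u_{h_n}$ has a weak $t$-derivative there), and $(u)_{h_n}\to u$ in $L^\vfi(\O_{I_0})$ by \eqref{eq:jb_weak:stekl_prop:int}, so the right side tends to $-\int_{\O_{I_0}}u\cdot w_{,t}$; hence $g=u_t$ in $\DD'(\O_{I_0})$, and by density of $\DD(\O_{I_0})$ the pairing representation extends to all admissible $w$. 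Finally, passing to the limit in \eqref{eq:jb_weak:steklov_ws2} — using $(\A(\D u))_{h_n}\to\A(\D u)$ in $L^{\vfi^*}(\O_{I_0})$ and $f_{h_n}\to f$ in $L^2(\O_{I_0})$, again via \eqref{eq:jb_weak:stekl_prop:int} applied to $\A(\D u)$ and $f$ respectively — yields the stated equation for $u_t$.

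The main obstacle I expect is the dual-space bookkeeping in the uniform estimate: one must be careful that the supremum is taken in the correct norm on the intersection space $W^{1,\vfi}_{x,0}\cap L^2$, that the representation \eqref{eq:modular} of elements of $(W^{-1,\vfi^*}_x+L^2)$ is being used consistently, and that the Orlicz-Hölder step genuinely reproduces $\int\vfi(|\D u|)$ rather than a Luxemburg-norm quantity — this is exactly where Lemma \ref{lem:jb_weak:goodphi} and the good $\vfi'$ property (through the constant $G(\vfi')$) are essential, since $\vfi^*(\vfi'(t))\sim\vfi(t)$ only up to constants depending on $G(\vfi')$. Everything else (the weak-$*$ compactness, the integration by parts in time, the density extension) is routine once the $\Delta_2$ framework from the Preliminaries is in place.
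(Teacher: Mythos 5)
Your proposal is correct and follows essentially the same route as the paper: a uniform-in-$h$ dual-norm bound on $(u_h),_t$ from \eqref{eq:jb_weak:steklov_ws2} via the growth assumption, the good-$\vfi'$ machinery of Lemma \ref{lem:jb_weak:goodphi} and the contraction property \eqref{eq:jb_weak:stekl_prop:int}, then weak-$*$ extraction by separability, lower semicontinuity for the bound, and identification of the limit with $u_t$ by testing against $\DD(\O_{I_0})$. The only cosmetic difference is in the last step, where you pass to the limit in the Steklov-averaged equation using $(\A(\D u))_{h_n}\to\A(\D u)$ in $L^{\vfi^*}$, while the paper instead substitutes the original weak formulation for $-\int u\cdot w,_t$ and extends by density; both are routine and equivalent under the $\Delta_2$ framework.
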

An analogous result holds for space-differences. Before stating it, let us explain how we understand $\Delta^{s} h$, when $h$ is merely a linear functional. This interpretation will be needed below for $(\Delta^{s} u)_t$, where
\[ u_t  \in { W^{-1,\vfi^*}_{x} (\O_{I}) }  \quad \text{ and } \quad u \in { W^{1,\vfi}_{x,0} (\O_{I}) } ,\]
 so let us focus on this case. Fix $\delta>0$. Let us take any $w \in  W^{1,\vfi}_{x,0} (\Omega^\delta_{I^\delta} )$ and extend it with zero to  $w_0 \in W^{1,\vfi}_{0} (\Omega^\frac{\delta}{2}_{I^\delta}  ) $. We  define for any  $s \in \er^d, |s| \le \frac{\delta }{2}$ 
\begin{equation}\label{eq:jb_weak:dual}
\langle  (\Delta^{s} u)_t, w \rangle_{(W^{-1,\vfi^*} + L^2) (\O^\delta_{I^\delta} ),  (W^{1,\vfi}_{0} \cap L^2) (\O^\delta_{I^\delta} ) }  := -  \langle  u_t, \Delta^{-s} w_0 \rangle_{(W^{-1,\vfi^*} + L^2) (\O^\frac{\delta}{2}_{I^\delta} ),  (W^{1,\vfi}_{0} \cap L^2) (\O^\frac{\delta}{2}_{I^\delta} )}
\end{equation}
Taking above smooth enough functions,  we see that  $(\Delta^s u)_t$ defined by \eqref{eq:jb_weak:dual} is truly the generalized time derivative of $\Delta^{s} u$.

Formula  \eqref{eq:jb_weak:dual} and the fact that time derivatives commute with space differences allow us to prove along lines of Lemma \ref{lem:distr} the following result.

\begin{lem}\label{cor:jb_strong:intro} Take a local weak solution $u$  of \eqref{eq:jb_structure:weak} on $\O_I$.
Fix small $\delta > 0$ such that $\O^\delta $ is nonempty. For almost any $s \in \er^d,\, |s| \le \frac{\delta }{2}$

\[
 |  (\Delta^{s} u)_t  |_{ (W^{-1,\vfi^*}_{x} + L^2) (\O^\delta_{I_0})} \le C (G (\vfi'))  \int_{\O_I} \left( 1 +\vfi(|\D u|)   + |f|^2 \right).
 \]
and
\[
\langle  (\Delta^{s} u)_t, w \rangle_{(W^{-1,\vfi^*} + L^2) (\O^\delta_{I_0})  ,  (W^{1,\vfi}_{0} \cap L^2) (\O^\delta_{I_0}) } + \int_{\O^\delta_{I_0}}  \Delta^{s} \! \left( \A ( \D u ) \right)    \D w  =   \int_{\O^\delta_{I_0}}  (\Delta^{s} f) w
\]
for any $w \in   (W^{1,\vfi}_{x,0} \cap L^2) (\O^\delta_{I_0})$
\end{lem}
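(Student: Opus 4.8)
The plan is to imitate verbatim the proof of Lemma~\ref{lem:distr}, the only new ingredient being that the space--difference operator $\Delta^{s}$ is applied to the Steklov--averaged identity \eqref{eq:jb_weak:steklov_ws2} \emph{before} passing to the limit $h\to 0^+$. Fix $h\le h_0$ and $s\in\er^d$ with $|s|\le\tfrac{\delta}{2}$. Given an admissible test function $w\in (W^{1,\vfi}_{x,0}\cap L^2)(\O^\delta_{I_0})$, extend it by zero to $w_0\in (W^{1,\vfi}_{0}\cap L^2)(\O^{\delta/2}_{I_0})$ as in the discussion preceding \eqref{eq:jb_weak:dual}, and insert $\Delta^{-s}w_0$ in place of $w$ in \eqref{eq:jb_weak:steklov_ws2}. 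Using the elementary summation--by--parts identity for finite differences (legitimate since $|s|\le\tfrac{\delta}{2}$ keeps every translated integrand inside $\O_I$), the commutation of $\Delta^{s}$ with both the time derivative and the Steklov average, and the definition \eqref{eq:jb_weak:dual} of $(\Delta^{s}u)_t$ paired against test functions, one is led to
\[
\int_{\O^\delta_{I_0}} (\Delta^{s} u_h),_t \cdot w + \bigl(\Delta^{s}(\A(\D u))\bigr)_h : \D w = \int_{\O^\delta_{I_0}} (\Delta^{s} f)_h\, w ,
\]
for every such $w$, where $(\Delta^{s} u_h),_t$ is a genuine $L^2$ function by \eqref{eq:jb_weak:stekl_prop:der}.

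Next I would derive the asserted a priori bound, uniformly in $h$ and $s$. For the principal part, \eqref{eq:jb_weak:stekl_prop:int} combined with convexity of $\vfi^*$, the $\Delta_2$ condition (available via Proposition~\ref{prop:jb_structure:'giveD}) and translation invariance of the Lebesgue integral on the larger domain give $\int \vfi^*\bigl(|(\Delta^{s}(\A(\D u)))_h|\bigr)\le C\int_{\O_I}\vfi^*(|\A(\D u)|)$; then Assumption~\ref{ass:jb_weak:growth_gen} with $Q=0$, the good $\vfi'$ property of Definition~\ref{def:jb_weak:goodphi'} and Lemma~\ref{lem:jb_weak:goodphi} bound $\vfi^*(|\A(\D u)|)\lesssim \vfi(|\D u|)+1$ pointwise, exactly as in Lemma~\ref{lem:distr}. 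For the forcing term one simply uses $\|(\Delta^{s}f)_h\|_{L^2}\le 2\|f\|_{L^2(\O_I)}$. Taking the supremum over $\|w\|_{(W^{1,\vfi}_{x,0}\cap L^2)(\O^\delta_{I_0})}\le 1$ yields
\[
\Bigl|\int_{\O^\delta_{I_0}} (\Delta^{s} u_h),_t \cdot w\Bigr| \le C(G(\vfi'))\int_{\O_I}\bigl(1+\vfi(|\D u|)+|f|^2\bigr),
\]
uniformly in $h$. Since $(W^{-1,\vfi^*}_{x}+L^2)(\O^\delta_{I_0})$ is the dual of the separable space $(W^{1,\vfi}_{x,0}\cap L^2)(\O^\delta_{I_0})$, a subsequence $(\Delta^{s}u_{h_n}),_t$ converges $*$-weakly to some $g$ which, by lower semicontinuity of the norm, satisfies the same estimate. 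Testing with $w\in\DD(\O^\delta_{I_0})$ and using the $L^\vfi$ convergence $u_{h_n}\to u$ from \eqref{eq:jb_weak:stekl_prop:int} (hence $\Delta^{s}u_{h_n}\to\Delta^{s}u$) identifies $\langle g,w\rangle = -\int \Delta^{s}u\cdot w,_t$, which by \eqref{eq:jb_weak:dual} and the remark following it is precisely $\langle (\Delta^{s}u)_t,w\rangle$; thus $g=(\Delta^{s}u)_t$, first in $\DD'(\O^\delta_{I_0})$ and then, via the uniform bound and density of $\DD$, in $(W^{-1,\vfi^*}_{x}+L^2)(\O^\delta_{I_0})$. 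Finally, letting $h_n\to0^+$ in the displayed Steklov identity --- the elliptic term converges because $(\Delta^{s}(\A(\D u)))_{h_n}\to\Delta^{s}(\A(\D u))$ in $L^{\vfi^*}$ while $\D w\in L^\vfi$, and the right--hand side converges in $L^2$ --- and then extending from $\DD(\O^\delta_{I_0})$ to all $w\in (W^{1,\vfi}_{x,0}\cap L^2)(\O^\delta_{I_0})$ by density, one obtains the claimed identity.

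I expect the only real difficulty to be bookkeeping rather than analysis: namely, making the duality pairing \eqref{eq:jb_weak:dual} interact correctly with the discrete integration by parts and with the Steklov regularization on the nested domains $\O^\delta\subset\O^{\delta/2}\subset\O$, and verifying that the translated integrands never leave $\O_I$ so that the estimate is genuinely uniform in $s$ (this is where the restriction $|s|\le\tfrac{\delta}{2}$ is used). Once this is set up, every step is a routine transcription of the corresponding step in the proof of Lemma~\ref{lem:distr}.
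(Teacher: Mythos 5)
Your argument is correct and essentially follows the route the paper indicates with its one-sentence hint (``along lines of Lemma~\ref{lem:distr}''): you re-run the Steklov-average machinery of Lemma~\ref{lem:distr} after applying the space difference, derive an $h$-uniform bound on $(\Delta^{s}u_h)_{,t}$, pass to the weak-$*$ limit, and identify the limit with $(\Delta^{s}u)_t$ via \eqref{eq:jb_weak:dual}; the bookkeeping on the nested domains $\O^\delta\subset\O^{\delta/2}\subset\O$ and the role of the restriction $|s|\le\delta/2$ are handled correctly.

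Two small remarks. First, a strictly shorter alternative --- which the paper's simultaneous appeal to formula \eqref{eq:jb_weak:dual} most likely has in mind --- is to skip the second pass through the Steklov regularization entirely: insert $\Delta^{-s}w_0$ directly into the \emph{conclusion} of Lemma~\ref{lem:distr}, apply discrete integration by parts to move $\Delta^{-s}$ onto $\A(\D u)$ and $f$, and read off both the identity and the norm bound from the already-established statement for $u_t$ together with $\|\Delta^{-s}w_0\|\le 2\|w\|$. This avoids re-deriving the a priori bound and the weak-$*$ compactness step. Second, note that for smooth functions the change of variables gives $\int g\,\Delta^{-s}h=\int(\Delta^{s}g)\,h$ with \emph{no} minus sign, so formula \eqref{eq:jb_weak:dual} as printed carries a spurious sign; your final displayed identity nevertheless agrees with the lemma statement, so you have implicitly used the correct sign --- just be aware that taking \eqref{eq:jb_weak:dual} literally would produce a contradiction at the identification step. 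Neither point affects the validity of your proof.
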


\section{Proofs of main results}\label{sec:prf}

In the first subsection, we present a formal argument for validity of our theorems \emph{i.e.} a priori testing with $\divv ((\nabla u) \psi^2)$. The subsequent sections contain its rigorization via the technique from \cite{DieEtt08} by Diening and Ettwein, which we modify for the evolutionary, symmetric-gradient case and improve. This improvement  lies in the fact that we do not need to resort to a covering argument and to Giaquita-Modica-type lemma for estimates with growing supports (but, at the same time, we are not interested in the Musielak-Orlicz growths $\varphi (x, | S |)$).

\subsection{Prologue. A priori estimates}
Assume that a solution $u$ to \eqref{eq:jb_structure:weak} or to  \eqref{eq:jb_structure:weakX} is smooth. We test it with  $\divv ((\nabla u) \psi^2)$, where $ \psi \in \test (I; C^\infty_0 (\O) )$ with values in $[0,1]$ will be precised in what follows. Hence in the case \eqref{eq:jb_structure:weak} we get
 \begin{multline}\label{eq:jb_strong:apriori_1}
\sup_{t \in [t_1, t_2]} \frac{1}{2} \int_{\O} | \nabla u \psi   |^{2} (t)+  \int_{t_1}^{t_2} \int_{\O}  \partial_{x_i} ( \A ( \D u))  \!:\! \D u,_{x_i} \psi^{2}\le \\
  \int_{t_1}^{t_2} \int_{\O} |  \nabla u |^2 |  \psi,_t \!|_\infty  \psi  + 2 \left|  \partial_{x_i} ( \A ( \D u)) \right|  |  u,_{x_i} \!|    |\nabla \psi |_\infty  \psi +| f_{x_i} u,_{x_i} | \psi^{2}  +  \frac{1}{2} \int_{\O} |  \nabla u \psi   |^{2} (t_1)
 \end{multline}
 and an analogous estimate with $\nabla u$ in place of $\D u$ holds for   \eqref{eq:jb_structure:weakX}.
Assumption \ref{ass:jb_weak:growth_gen} gives \eqref{eq:jb_structure:diffa1}, i.e.
\[
\begin{aligned}
\partial_s (\A(P)) : \partial_s P  & \ge  c\, \vfi'' (|P|) |\partial_s P|^2, \\
| \partial_s (\A(P)) |& \le C\, \vfi'' (|P|) |\partial_s P|,
\end{aligned}
\]
 which applied in the preceding estimate  gives  via the Schwarz inequality
 \begin{multline}\label{eq:jb_strong:apriori_3}
\sup_{t \in [t_1, t_2]} \frac{1}{2} \int_{\O} | \nabla u \psi   |^{2} (t)  +  \int_{t_1}^{t_2}  \int_{\O}    \vfi'' (|  \D u|) |  \nabla \D u |^2  \psi^2  \le \\
\int_{t_1}^{t_2} \int_{\O} |  \nabla u |^2    |  \psi,_t \!|_\infty     \psi 
+ C   |\nabla \psi |^2_\infty \int_{t_1}^{t_2} \int_{\O}  \vfi'' (|  \D u|)   | \nabla u |^2 +  \frac{1}{2} \int_{\O} |  \nabla u \psi   |^{2} (t_1)  + \eps \int_{t_1}^{t_2} \int_{\O} | \nabla u |^2   \psi^2 + \frac{1}{4 \eps} \int_{t_1}^{t_2} \int_{\O} | \nabla f |^2
 \end{multline}
again, an analogous estimate with $\nabla u$ in place of $\D u$ holds for   \eqref{eq:jb_structure:weakX}. Now we split considerations for \eqref{eq:jb_structure:weak} and for \eqref{eq:jb_structure:weakX}.
 \subsubsection*{Symmetric gradient case  \eqref{eq:jb_structure:weak}.}
In this case we assume that  $\vfi''$ is almost increasing. Consequently $\vfi'' (|  \D u|)   \le C \,  \vfi'' (|  \nabla u|)$. The good $\vfi'$ property and \eqref{eq:jb_weak:goodphi} give $ \vfi'' (|  \nabla u|)   | \nabla u |^2 \le C (G (\vfi') ) \, \vfi(| \nabla u |) $. Again,  almost increasingness of $\vfi''$ implies also
\begin{equation}\label{eq:jb_strong:apriori_pt}
|  \nabla u  |^2 \le (\delta + |  \nabla u  |)^2 \le  \frac{2C}{\vfi''(\delta+ |  \nabla u  |)} \vfi( \delta + | \nabla u |) \le  \frac{2C}{\vfi''(\delta)} \left( \vfi( \delta )+  \vfi(  | \nabla u |) \right),
\end{equation}
for any $\delta$ such that $\vfi''(\delta) \neq 0$. The second inequality above follows from $\Delta_2$ condition and convexity. 
This and choosing $\vfi$ to be a space-time cutoff  function that vanishes outside $B \times {[t_1, t_2]}$ and is $\equiv 1$ on smaller $B' \times  {[t'_1, t'_2]}$ in \eqref{eq:jb_strong:apriori_3} yields
 \begin{multline*}
\sup_{t \in [t'_1, t'_2]}  \int_{B'} |  \nabla u  |^2 (\tau)+  \int_{t'_1}^{t'_2} \! \int_{B'}  \vfi'' (|  \D u|) |  \nabla \D u |^2 \le \\
 C (G (\vfi')) \left( 1+ \frac{1}{\vfi''(\delta )} \right)   (|  \psi,_t \! |_\infty   + |\nabla \psi |^2_\infty)  \int_{B \times {[t_1, t_2]}}  \left( \vfi (|  \nabla u|) + \vfi(\delta ) + |f|^2 \right).
 \end{multline*}
Using on l.h.s. above  \eqref{eq:jb_structure:diffa15} with $ C \vfi'' (|  \D u|) \id_{\{|  \D u| \ge \delta\} } \ge \vfi'' (\delta)  \id_{ \{|  \D u| \ge \delta \}} \ge 0$ we arrive at
 \begin{multline*}
\sup_{t \in [t'_1, t'_2]}  \int_{B'} |  \nabla u  |^2 (\tau)+  \int_{t'_1}^{t'_2} \! \int_{B'}  \vfi'' (|  \D u|) |  \nabla \D u |^2 +  \vfi'' (\delta) |  \nabla^2 u |^2  \id_{\{|  \D u| \ge \delta \}}  \le \\
 C (G (\vfi')) \left( 1+ \frac{1}{\vfi''(\delta )} \right)   (|  \psi,_t \! |_\infty   + |\nabla \psi |^2_\infty)  \int_{B \times {[t_1, t_2]}}  \left( \vfi (|  \nabla u|) + \vfi(\delta ) + |\nabla f|^2 \right).
 \end{multline*}
For the case $\vfi'' (0) >0$ we may take there $\delta=0$, $\vfi(\delta) = 0$.
 \subsubsection*{Full gradient case  \eqref{eq:jb_structure:weakX}.}
 The above estimate holds also for the full-gradient case. Here, however, we in fact  do not need to assume any additional growth restrictions on $\vfi''$, like the almost-increasingness before. We start at \eqref{eq:jb_strong:apriori_3} with $\nabla u$ in place of $\D u$. In the first step, we choose there  $\psi$ to be a pure space-cutoff function, i.e. $\psi =   \psi_0 \equiv 1$ on $[t_1, t_2]$, then the r.h.s. part containing $  |  \psi,_t \!|_\infty   $ vanishes\footnote{Compare Remark \ref{rem1}. }. This and choice of $\eps = \frac{1}{4 (t_2 - t_1)}$ allows us to gain control over the quadratic term
\begin{multline}\label{eq:jb_strong:apriori_ptX}
\sup_{t \in [t_1, t_2]} \int_{B'} | \nabla u   |^{2} (t)    \le C   |\nabla \psi_0  |^2_\infty \int_{t_1}^{t_2} \int_{B}  \left( \vfi'' (|  \nabla u|)   | \nabla u |^2 + |\nabla f|^2   \right)+  C   \int_{B} |  \nabla u     |^{2} (t_1)  \le \\
 C   |\nabla \psi_0  |^2_\infty \int_{t_1}^{t_2} \int_{B}  \left( \vfi (|  \nabla u|)  + |\nabla f|^2 \right) + C   \int_{B} |  \nabla u     |^{2} (t_1) ,
\end{multline}
where the second inequality follows from the good $\vfi'$ property\footnote{The first inequality is available also for the  \eqref{eq:jb_structure:weak} case, with  $\vfi'' (| D u|)   | \nabla u |^2$ on r.h.s. However, the second inequality does not hold for general growths $\vfi$.}. Observe that, since it works for any $t_1 < t_2$, we can choose there $t_1 = a$ (possibly the initial value).

In the second step, we obtain from \eqref{eq:jb_strong:apriori_3}, similarly as in the symmetric-gradient case, that
\[
\sup_{t \in [t'_1, t'_2]}  \int_{B''} |  \nabla u  |^2 (\tau)+  \int_{t'_1}^{t'_2} \! \int_{B''}  \vfi'' (|  \D u|) |  \nabla \D u |^2 \le \\
 C (G (\vfi')) (|  \psi,_t \! |_\infty   + |\nabla \psi |^2_\infty)  \int_{B' \times {[t_1, t_2]}}  \left( \vfi (|  \nabla u|) + |  \nabla u|^2 + |\nabla f|^2 \right)
\]
and we can control the quadratic term of its r.h.s. by the l.h.s. of the previous inequality. Finally, on r.h.s. we can replace the full gradient with the symmetric one via Lemma \ref{lem:korn} (Korn's inequality).
 \subsubsection*{A remark on the missing case in the symmetric gradient case  \eqref{eq:jb_structure:weak}. }
One can try to close the symmetric-gradient version of  \eqref{eq:jb_strong:apriori_ptX}, i.e.
\[
\sup_{t \in [t_1, t_2]} \int_{B'} | \nabla u   |^{2} (t) +  \int_{t'_1}^{t'_2} \! \int_{B'}  \vfi'' (|  \D u|) |  \nabla \D u |^2    \le C   |\nabla \psi_0  |^2_\infty \int_{t_1}^{t_2} \int_{B}  \left( \vfi'' (|  \D u|)   | \nabla u |^2 + |\nabla f|^2   \right)+  C   \int_{B} |  \nabla u     |^{2} (t_1).
\]
For instance, for $\vfi'' $ almost-decreasing the r.h.s. can be controlled by the quadratic growths of $| \nabla u   |$. This and Giaquinta-Modica-type lemma for increasing supports allows to close the estimate, provided the r.h.s. controls the quadratic growths. To quantify this, one would need however to resort to Boyd indices. We feel that this approach may be too close to the polynomial case to be interesting. Another option is to use another estimate than \eqref{eq:jb_strong:apriori_1}, namely the one that involves $ \left|  ( \A ( \D u)) \right|  |   \partial^2_{x_i} u | $ instead of  $\left|  \partial_{x_i} ( \A ( \D u)) \right|  |  u,_{x_i} \!| $ on its r.h.s. Proceeding like this, in order to close estimates one needs to deal with derivatives and not with differences.

Now let us proceed with the rigorous proof. We provide it for the symmetric-gradient case \eqref{eq:jb_structure:weak} and finally we comment the full-gradient case \eqref{eq:jb_structure:weakX}.
\subsection{Proof of Theorem \ref{th1}}
\subsubsection{Step 1. Initial inequality for differences} 
In this step, we  obtain a starting point inequality for a first-order Caccioppoli estimate for space differences $\Delta^{s} u$. Since Definition \ref{def:weak} of a weak solution does not involve a local energy estimate and we are not allowed to immediately derive a $L^\infty (L^2)$ estimate from our notion of a weak solution, we will resort to certain approximations.
Recall that the interval $I = [a, b]$ and that $I_0 = [a, b - h_0]$. Let us choose any $\tau_0 \le \tau$ from $I_0$ and fix it. 

Fix small $\eps > 0$ such that $\O^\eps $ is nonempty. 
Using a  local weak solution $u$ of \eqref{eq:jb_structure:weak}, we write the formula for differences, where $s \in \er^d,\, |s| \le \frac{\eps }{2}$
\[
\int_{\O^\eps} (\Delta^{s} u) (t ) \cdot w (t){ \Big|}^{\tau}_{\tau_0 } + \int^{\tau}_{\tau_0 } \int_{\O^\eps} -\, (\Delta^{s} u) \cdot w,_t + \, \Delta^{s} (\A (\D u))  \D w =  \int^{\tau}_{\tau_0 } \int_{\O^\eps} (\Delta^{s} f) w.
\]
Now for $w = \tilde w \eta^k \sigma^{2k}$, with $\eta \equiv \eta (x)$ in $\test (\O^\delta)$, $\sigma \equiv \sigma (t)$  in $\test (I_0)$ and $k \in \en$ to be decided later, we have
\begin{equation}\label{eq:apprDa}
\int_{\O^\eps} (\Delta^{s} u) (\tau) \eta^k \cdot \tilde w (\tau)  \sigma^{2k} + \int^{\tau}_{\tau_0 } \int_{\O^\eps} -\, (\Delta^{s} u)  \eta^k \cdot  ( \tilde w \sigma^{2k}),_t + \, \Delta^{s} (\A (\D u))  \D ( \tilde w \eta^k) \sigma^{2k}=  \int^{\tau}_{\tau_0 } \int_{\O^\eps}  (\Delta^{s} f)   \tilde w \eta^k \sigma^{2k}
\end{equation}
Lemma  \ref{cor:jb_strong:intro} and existence class for $u$ imply that
\[
 (\Delta^{s} u)_t \in (W^{-1,\vfi^*}_{x} + L^2) (\O^\eps_{I_0}), \qquad  (\Delta^{s} u) \in (W^{1,\vfi}_{x} \cap L^2) (\O^\eps_{I_0}),
\]
hence
\[
 ((\Delta^{s} u) \eta^k )_t \in  (W^{-1,\vfi^*}_{x} + L^2) (\O^\eps_{I_0}), \qquad  (\Delta^{s} u)   \eta^{k}  \in (W^{1,\vfi}_{x, 0} \cap L^2) (\O^\eps_{I_0}).
\]
Therefore \cite{ElmMes05} provides us with a smooth approximations $\phi_n$ such that
\[ 
\begin{aligned}
(\phi_n),_t &\to (\Delta^{s} u)_t \eta^k \text{ in }  (W^{-1,\vfi^*}_{x} + L^2)((\O^\eps_{I_0}))   \\
 \phi_n &\to (\Delta^{s} u) \eta^k \text{ in } (W^{1,\vfi}_{x, 0}\cap L^2)((\O^\eps_{I_0})) \\
  \phi_n &\to (\Delta^{s} u)  \eta^k  \text{ in } C (I_0, L^2(\O^\eps))
\end{aligned} \]
where the convergence are in the sense of norms; the former of norms of  the representation \eqref{eq:modular}. The first two convergences are given as  Theorem 1 of \cite{ElmMes05},  since $\Delta_2$-condition gives us equivalence between the modular convergence and the strong one. The third convergence above follows from Lemma 3 in \cite{ElmMes05} (it is stated as a trace-type result, but the appropriate  convergence is given in the proof).

We put  $\tilde w = \phi_n$  in \eqref{eq:apprDa} and get
\begin{equation}\label{eq:apprD}
\int_{\O^\eps} (\Delta^{s} u) (\tau ) \eta^k \cdot  \phi_n (\tau)  \sigma^{2k} + \int^{\tau}_{\tau_0} \int_{\O^\eps} -\, (\Delta^{s} u)  \eta^k \cdot  (  \phi_n \sigma^{2k}),_t + \, \Delta^{s} (\A (\D u))  \D (  \phi_n \eta^k) \sigma^{2k}=  \int^{\tau}_{\tau_0 } \int_{\O^\eps}  (\Delta^{s} f)   \phi_n \eta^k \sigma^{2k}
\end{equation}
The first integral of l.h.s. of \eqref{eq:apprD} goes to 
\[
\int_{\O^\eps} |(\Delta^{s} u) (\tau)  \eta^k \sigma^{k} |^2
\]
as $n \to \infty$, in view of $C(L^2)$ convergence.
The second can be written as
\[
\int^{\tau}_{\tau_0 } \int_{\O^\eps} (-\, (\Delta^{s} u)  \eta^k + \phi_n) \cdot  (  \phi_n \sigma^{2k}),_t - \int^{\tau}_{\tau_0} \int_{\O^\eps} \phi_n \cdot  (  \phi_n \sigma^{2k}),_t  =
\]
\[
\int^{\tau}_{\tau_0 } \int_{\O^\eps} (-\, (\Delta^{s} u)  \eta^k + \phi_n) \cdot  (  \phi_n \sigma^{2k}),_t - \frac{1}{2} \int_{\O^\eps} |\phi_n (\tau) |^2 \sigma^{2k} (\tau)  - k \int^{\tau}_{\tau_0 } \int_{\O^\eps}   |\phi_n|^2 \sigma^{2k-1} \sigma,_t.
\]
Hence it tends to 
\[
- \frac{1}{2}  \int_{\O^\eps} |(\Delta^{s} u) (\tau)  \eta^k \sigma^{k} |^2 - \int^{\tau}_{\tau_0 } \int_{\O^\eps}  k |(\Delta^{s} u) \eta^k |^2 \sigma^{2k-1} \sigma,_t,
\]
due to $W^{-1,\vfi^*}_{x} + L^2$ boundedness and $W^{1,\vfi}_{x, 0}\cap L^2$ convergence of  $\phi_n$.
Finally, by the same token combined with Assumption \ref{ass:jb_weak:growth_gen} on growth,  Lemma \ref{lem:jb_weak:goodphi} and convexity of $\vfi$,  the third term of l.h.s. of \eqref{eq:apprD} is in  limit
\[
 \int_{\Omega^\eps_{I_0} }    \Delta^{s} \! \left(  \A ( \D u  ) \right):   \left( \Delta^{s} \D u \right)   \eta^{2k} \sigma^{2k}  +
 2k  \Delta^{s} \! \left(  \A ( \D u  ) \right):  \left(  \Delta^{s}  u \; \hat \otimes \; \nabla \eta \right) \eta^{2k-1} \sigma^{2k},
\]
where we also used the fact that differences and weak derivatives commute; $\hat \otimes$ denotes symmetrization of the outer product $\otimes$, i.e. $(a \hat \otimes b)_{ij} := \frac{1}{2} (a^i b^j + a^j b^i)$. 

Summing up, we arrive from \eqref{eq:apprD} at
 \begin{multline}\label{eq:jb_strong:intro2}
 \frac{1}{2}  \int_{\O^\eps} \left|  \left(\Delta^{s} u \right)  \eta^k \sigma^k \right|^2 (\tau) +\int_{a}^\tau \int_{\Omega^\eps }  \Delta^{s} \left(  \A ( \D u  ) \right):   \left( \Delta^{s} \D u \right)   \eta^{2k} \sigma^{2k}  \le \\
 2k \int_{\tau_0}^\tau  \int_{\Omega^\eps } \left|  \Delta^{s} \left(  \A ( \D u  ) \right) \right| \left|   \Delta^{s} u \right| |\nabla \eta | \eta^{2k-1} \sigma^{2k} +   k \int_{\tau_0}^\tau  \int_{\O^\eps}    |(\Delta^{s}  u) \eta^k|^2 \sigma^{2k-1} |\sigma,_t| \\
 + \eps \int_{\tau_0}^\tau  \int_{\Omega^\eps }  \left|  \left(\Delta^{s} u \right)  \eta^k \sigma^k \right|^2 + \frac{1}{4 \eps}  \int_{\tau_0}^\tau  \int_{\Omega^\eps }  \left|  \left(\Delta^{s} f \right)\right|^2.
\end{multline}
Now we use Assumption  \ref{ass:jb_weak:growth_gen}. It gives via \eqref{eq:app:shifted:eq} the pointwise estimate
\[ \left|  \Delta^{s} \left(  \A ( \D u  ) \right) \right|  \le C(G(\vfi')) \vfi'_{| \D u|} \left(| \Delta^{s} \D u | \right) \]
  for the r.h.s. of \eqref{eq:jb_strong:intro2}. 
In tandem with Proposition \ref{lem:jb_weak:mon_equiv}, Assumption \ref{ass:jb_weak:growth_gen} yields also the pointwise majorization 
\begin{equation}\label{eq:jb_strong:intro:n1}
\Delta^{s} \left(  \A ( \D u  ) \right)\!:\!  \left( \Delta^{s} \D u \right)    \ge \frac{1}{C(G(\vfi'))}  | \Delta^{s} \V ( \D u ) |^2 
\end{equation}
 for the l.h.s. of \eqref{eq:jb_strong:intro2}. 

Hence we have obtained that for almost any $s \in \er^d, \, |s| \le \frac{\eps }{2}$ holds 
  \begin{multline}\label{eq:jb_strong:intro:delta_id_x}
\frac{1}{2}  \int_{\O^\eps} \left|  \left(\Delta^{s} u \right)  \eta^k \sigma^k \right|^2 (\tau)+  \int_{\tau_0}^\tau \int_{\O^{\delta}}  \left| \left( \Delta^{s} \V ( \D u ) \right) \eta^k \sigma^k \right|^2  
\le 
\\
  C (G (\vfi')) k \int_{\tau_0}^\tau  \int_{\O^{\eps}}  \left( |\Delta^{s}u |^2   |\sigma,_t |_\infty + \vfi'_{|\D u (t)| } \left(|  \Delta^{s} \D u| \right) | \Delta^{s} u| | \nabla \eta |_\infty \right) \eta^{2k-1} \sigma^{2k-1}  + \\
   \eps \int_{\tau_0}^\tau  \int_{\Omega^\eps }  \left|  \left(\Delta^{s} u \right)  \eta^k \sigma^k \right|^2 + \frac{1}{4 \eps}  \int_{\tau_0}^\tau  \int_{\Omega^\eps }  \left|  \left(\Delta^{s} f \right)\right|^2
\end{multline}
 for any $\tau \in I$.

\subsubsection{Step 2. Estimate for differences without a growing support.} 

In this step, choice of large $k$ in cutoff functions $\eta^k, \sigma^k$ will allow us to obtain the same supports on the l.h.s. and r.h.s. of relevant estimates. Alternatively, one can use cutoff functions $\eta, \sigma$ and to deal with growing supports by a Giaquinta-Modica device. The latter approach is longer.

Fix $\eps > 0$ so small  that the \"ubercylinder $Q_{R+ \eps} \Subset \O_I$. All the following work happens in $Q_{R+ \eps}$. Next, take any concentric $Q_{\rho_1} \Subset Q_{\rho_2}$, such that $Q_{\rho_2} \subset Q_R$. We write estimate \eqref{eq:jb_strong:intro:delta_id_x} with $ s := l e_i$, with real $|l| \le \frac{\eps}{2}$, where $e_i$ denotes i-th canonical vector in $\er^d$. Let $\eta$, $\sigma$ cut off between $Q_{\rho_1}$ and $Q_{\rho_2}$, i.e. $\eta \sigma \equiv 1$  on $Q_{\rho_1}$ and vanishes outside $Q_{\rho_2}$. We choose these cutoff functions so that $|\nabla\eta|\leq C/(\rho_2-\rho_1)$ and $|\sigma_t|\leq C/(\rho_2-\rho_1)^2$. Hence
  \begin{multline}\label{eq:jb_strong:spatial:ini}
\frac{1}{2}  \int_{B_{\rho_2}} \left| \Delta^{le_i} u \sigma^k \eta^k \right|^2  (\tau)+  \int_{\tau_0}^\tau\int_{B_{\rho_2}}  \left| \Delta^{le_i} \V(\D u)\sigma^k\eta^k \right|^2 
\le 
\\
  C (G (\vfi')) k
\int_{\tau_0}^\tau\int_{B_{\rho_2}} \left| \Delta^{le_i} u \eta^k\right|^2\sigma^{2k-1}|\sigma_t|  +   C (G (\vfi')) k
\int_{\tau_0}^\tau\int_{B_{\rho_2}}  \vfi'_{|\D u| } \left(|\Delta^{le_i} \D u| \right) | \Delta^{le_i} u|  \eta^{2k-1}|\nabla\eta|\sigma^{2k}  +  \\
\eps_0 \int_{\tau_0}^\tau  \int_{\Omega^\eps }  \left|  \left(\Delta^{le_i} u \right)  \eta^k \sigma^k \right|^2 + \frac{1}{4 \eps_0}  \int_{\tau_0}^\tau  \int_{\Omega^\eps }  \left|  \left(\Delta^{le_i} f \right)\right|^2 :=   C (G (\vfi')) \int_{\tau_0}^\tau ( A + B + \eps C + F).
\end{multline}
Observe that  the differences in the formula \eqref{eq:jb_strong:spatial:ini} remain within the \"ubercylinder $Q_{R+ \eps} $. Recall that $T_s g (x) := g (s +x)$.
In order to deal with the the r.h.s. of \eqref{eq:jb_strong:spatial:ini}, let us observe first that absolute continuity along lines of a Sobolev function $u(t)$ implies that for almost every $z \in Q_{\rho_2}$, real $l, \;|l| \le \frac{\eps}{2}$ we have
\begin{equation}\label{eq:jb_strong:spatial:acl}
|\Delta^{l e_i} u (z)|^\alpha \le \left| \int_0^l u,_{x_i}  \! \circ T_{\lambda e_i} (z) d \lambda \right|^\alpha \le \dashint_0^l |l|^\alpha | \nabla u \circ T_{\lambda e_i} (z)|^\alpha d \lambda
\end{equation}
for $\alpha \ge 1$. By $\dashint_0^l $ we understand here and in the following $\frac{1}{|l|} \int_0^l $ for $ l \ge 0$ and   $\frac{1}{|l|} \int_{-l}^0 $ otherwise. We introduce $\frac{\eps}{2} \ge h \ge l$, which will be useful later.  

\paragraph{Dealing with $B$} 

Now let us consider quantity $B$ of \eqref{eq:jb_strong:spatial:ini}.  First we estimate it using \eqref{eq:jb_strong:spatial:acl} with $\alpha = 1$, to get
\begin{equation}\label{eq:jb_strong:spatial:ini_2-pk}
B  \le  \sigma^{2k}  \int_{B_{\rho_2}} \dashint_0^l \frac{|l|}{|h|} \underbrace{\vfi'_{|\D u| } \left(|\Delta^{le_i} \D u| \right)  \frac{|h|}{{\rho_2}-{\rho_1}}\eta^{2k-1} | \nabla u \circ T_{\lambda e_i} | }_{:= J} d \lambda
\end{equation}
Let us focus on $J$. We change shift with help of  \eqref{eq:app:shifted:change} of Lemma \ref{lem:app:shifted} to get
\[
J \le C (G(\vfi'))  \left[ \vfi'_{|\D u \circ T_{\lambda e_i} | } \left(|\Delta^{(\lambda e_i, 0)} \D u| \right) + \vfi'_{|\D u \circ T_{\lambda e_i} | } \left(|\Delta^{(l -\lambda) e_i} \D u \circ T_{\lambda e_i} | \right) \right]\eta^{2k-1}  \frac{|h|}{{\rho_2}-{\rho_1}} | \nabla u \circ T_{\lambda e_i} | 
\]
with $\lambda \in [0, l]$. Recall Corollary \ref{cor:jb_weak:goodphi_shifted}. It allows to use Young's inequality \eqref{eq:jb_weak:Y} for shifted $\eN$-functions and property \eqref{eq:jb_weak:phistar}, i.e. $\varphi_{a}^*\circ \varphi_{a}'   \le C (G (\vfi_{a}')) \varphi_{a} $, with a common bound $C (G (\vfi_{a}')) $ on constants. 
Here we also combine Proposition~\ref{prop:jb_structure:polyGrowth} with Lemma~\ref{lem:app:shifted} to extract $\eta^{2k-1}$ out of $\eN$-functions $\vfi_a^*$. Hence
\begin{multline}\label{eq:jb_strong:spatial:m1}
J \le \frac{\delta}{ 12 C_1} \eta^{(2k-1)q_1} \left[ \vfi_{|\D u \circ T_{\lambda e_i}  | } \left(|\Delta^{\lambda e_i}  \D u| \right) + \vfi_{|\D u \circ T_{\lambda e_i}  | } \left(|\Delta^{(l -\lambda) e_i}  \D u \circ T_{\lambda e_i}  | \right) \right] + \\
C (\delta, C_1, G(\vfi')) \vfi_{|\D u \circ T_{\lambda e_i}  | } \left( \frac{|h|}{{\rho_2}-{\rho_1}} | \nabla u \circ T_{\lambda e_i}  | \right)
\end{multline}
for a yet unspecified $C_1$. To obtain the inequality above we used also convexity of an $\eN$-function. Now we set $k\in \en$ so large that $(2k-1)q_1>2k$. By  $\vfi_{|P|} ( |P-Q|) \sim | \V(P) - \V(Q) |^2$ of Proposition \ref{lem:jb_weak:mon_equiv}, we majorize the $\delta$-part of the r.h.s. of \eqref{eq:jb_strong:spatial:m1} by
\begin{equation}\label{eq:jb_strong:spatial:m2}
\eta^{2k}\frac{\delta C(G(\vfi'))}{  C_1}  \left[  |\Delta^{\lambda e_i}  \V (\D u)|^2 +  |\Delta^{(l -\lambda) e_i}  \V \left(\D u \circ T_{\lambda e_i}  \right) |^2\right]
\le 3\eta^{2k} \delta |\Delta^{\lambda e_i}  \V ( \D u) |^2   + 2 \eta^{2k}\delta  |\Delta^{l e_i}  \V ( \D u) |^2
\end{equation}
where the second inequality follows from the choice $C_1:= C(G(\vfi'))$ and from the identity
\begin{equation}\label{eq:strong:spatial:eff}
\Delta^{(l-\lambda) e_i} \V (\D u \circ  T_{\lambda e_i} ) = \Delta^{l e_i}  \V (\D u)  - \Delta^{\lambda e_i}  \V (\D u).
\end{equation}
Concerning the last summand of r.h.s.  of \eqref{eq:jb_strong:spatial:m1}, we increase it by changing its shift from ${|\D u \circ T_{\lambda e_i}  | }$ to larger ${|\nabla u \circ T_{\lambda e_i}  | }$. It is admissible thanks to Proposition \ref{prop:jb_weak:shiftedO} that gives $ |a| \le |b| \implies \vfi_a (t) \le \vfi_b (t) $. This\footnote{Here is the only place where we essentially use the almost increasingness of $\vfi''$. It is an interesting question if one can similarly deal with the subquadratic case, i.e. for $\vfi''$ almost decreasing.}\label{ft1}  and \eqref{eq:jb_strong:spatial:m2} used in  \eqref{eq:jb_strong:spatial:m1} gives
\begin{equation}\label{eq:jb_strong:spatial:m3}
J \le 3\eta^{2k} \delta |\Delta^{\lambda e_i}  \V ( \D u) |^2   + 2\eta^{2k} \delta  |\Delta^{l e_i}  \V ( \D u) |^2 + 
C (\delta, G(\vfi')) \vfi_{|\nabla u \circ T_{\lambda e_i}  | } \left( \frac{|h|}{{\rho_2}-{\rho_1}} | \nabla u \circ T_{\lambda e_i}  | \right)
\end{equation}
We would like now to extract $\frac{|h|}{{\rho_2}-{\rho_1}} $ from the argument of the shifted $\vfi$ above, most desirably as $\frac{|h|^2}{|{\rho_2}-{\rho_1}|^2} $. Unluckily, to this end one needs to impose an additional relation between $|h|$ and ${\rho_2}-{\rho_1}$. Namely only for $|h| \le {\rho_2}-{\rho_1}$ we have $\vfi_{|a|} (\frac{|h|}{{\rho_2}-{\rho_1}} |a|) \le C (G (\vfi'))
\frac{|h|^2}{|{\rho_2}-{\rho_1}|^2} \vfi (|a|) $, see \eqref{eq:app:shifted:little2} of Lemma \ref{lem:app:shifted}. Using this information in \eqref{eq:jb_strong:spatial:m3} and next plugging the obtained estimate for $J$ into \eqref{eq:jb_strong:spatial:ini_2-pk} we end up with

\begin{multline}\label{eq:jb_strong:spatial:ini_3}
B \le 
4 \delta   \int_{B_{\rho_2}}  |\Delta^{l e_i}  \V ( \D u) |^2\eta^{2k} \sigma^{2k}  + 6\delta   \int_{B_{\rho_2}}  \eta^{2k}  \sigma^{2k} \dashint_0^l \frac{|l|}{|h|} |\Delta^{\lambda e_i}  \V ( \D u) |^2  d \lambda  \\
+  
C (\delta, G(\vfi'))  \frac{|h|^2}{({\rho_2}-{\rho_1})^2} \vfi \left( | \nabla u \circ T_{\lambda e_i}  | \right)
\end{multline}

\paragraph{Dealing with $A$} 
Now we estimate the first term on the right hand side of \eqref{eq:jb_strong:spatial:ini}, \emph{i.e.} $A$. Using \eqref{eq:jb_strong:spatial:acl} with $\alpha = 2$, we
have for almost every $|l| \le \frac{\eps}{2}$ 

\begin{equation}\label{eq:jb_strong:spatial:ev1}
 A   \le  \frac{C}{({\rho_2}-{\rho_1})^2}  \intop_{Q_{\rho_2} } \left| \Delta^{le_i} u \right|^2 \le  \frac{C}{({\rho_2}-{\rho_1})^2}  \intop_{Q_{\rho_2} }  \dashint_0^l |l|^2 | \nabla u \circ T_{\lambda e_i} |^2 d \lambda \le \frac{C |h|^2}{({\rho_2}-{\rho_1})^2}  \intop_{Q_{\rho_2} }  \dashint_0^l  | \nabla u \circ T_{\lambda e_i} |^2 d \lambda.
\end{equation}
In the last inequality above we have increased $l $ to $h$; recall that  $\frac{\eps}{2} \ge h \ge l$.
 
\paragraph{Putting together estimates for  $A$ and $B$} 
Plugging \eqref{eq:jb_strong:spatial:ini_3} for $B$ and \eqref{eq:jb_strong:spatial:ev1} for $A$ into \eqref{eq:jb_strong:spatial:ini} we have
  \begin{multline}\label{eq:jb_strong:spatial:ini:fini}
\frac{1}{2}  \int_{B_{\rho_2}} \left| \Delta^{le_i} u \sigma^k \right|^2  (\tau)\eta^{2k}+  \int_{\tau_0}^\tau \int_{B_{\rho_2}}  \left| \Delta^{le_i} \V(\D u)\sigma^k\eta^k \right|^2  \le \\
 4 \delta  \int_{\tau_0}^\tau \int_{B_{\rho_2}}  |\Delta^{l e_i}  \V ( \D u) \sigma^k\eta^k |^2 +  6 \delta  \int_{\tau_0}^\tau   \dashint_0^l \frac{|l|}{|h|}   \int_{B_{\rho_2}}   |\Delta^{\lambda e_i}  \V ( \D u)  \; \sigma^k\eta^k  |^2 d \lambda  \\
   +  
C (\delta, G(\vfi'))  \frac{|h|^2}{({\rho_2}-{\rho_1})^2}   \dashint_0^l  \int_{Q_{\rho_2}} {\text {\Large [}} \vfi \left(  | \nabla u \circ T_{\lambda e_i}  | \right) + | \nabla u \circ T_{\lambda e_i} |^2 {\text {\Large ]}}  d \lambda +\\ \eps_0 \int_{\tau_0}^\tau  \int_{B_{\rho_2} }  \left|  \left(\Delta^{le_i} u \right)  \eta^k \sigma^k \right|^2 + \frac{ |h|^2}{4 \eps_0}  \int_{\tau_0}^\tau  \int_{B_{\rho_2} }  \dashint_0^l  | \nabla f \circ T_{\lambda e_i} |^2 d \lambda.
\end{multline}

For the term in square brackets above we use the pointwise majorization \eqref{eq:jb_strong:apriori_pt} with $\delta = \delta_0$.  
Hence  we can estimate the last integral of  \eqref{eq:jb_strong:spatial:ini:fini} by

\[
 \left(1 + \frac{2C}{\vfi'' (\delta_0)} \right)   \dashint_0^l     \int_{Q_{\rho_2}}    \vfi \left(  | \nabla u \circ T_{\lambda e_i} | \right) +  \vfi( \delta_0 ) d \lambda \le   \left(1 + \frac{2C}{\vfi'' (\delta_0)} \right)    \int_{Q_{\rho_2+ \eps}} \left(   \vfi \left(  | \nabla u  | \right) +  \vfi( \delta_0 )  \right),
\]
where to obtain the  inequality we increase the domain of inner integration, as $\lambda $ varies from $0$ to $l \le h \le \frac{\eps}{2} $.
 This estimate  in  \eqref{eq:jb_strong:spatial:ini:fini}  written for the parabolic cylinder $Q_{\rho_2}$, together with  choice $\delta \le \frac{1}{8}, \eps_0 = \frac{1}{4( \rho_2 - \rho_1)^2} $, give
  \begin{multline}\label{eq:jb_strong:spatial:ini:fini'}
\sup_{t \in I_{\rho^2_2}} \int_{B_{\rho_2}} \left| \Delta^{le_i} u \sigma^k \eta^k \right|^2  (t) +   \int_{Q_{\rho_2}}  \left| \Delta^{le_i} \V(\D u)\sigma^k\eta^k \right|^2  \le \\
  24 \delta     \dashint_0^l \frac{|l|}{|h|}   \int_{Q_{\rho_2}}   |\Delta^{\lambda e_i}  \V ( \D u)  \; \sigma^k\eta^k  |^2 d \lambda 
       +  
C (\delta, G(\vfi')) \left(1 + \frac{1}{\vfi'' (\delta_0)} \right) \frac{|h|^2}{({\rho_2}-{\rho_1})^2}    \int_{Q_{\rho_2+ \eps}}    \vfi \left(  | \nabla u  | \right) +  \vfi( \delta_0 ) +\\
 |h|^2 ({\rho_2}-{\rho_1})^2    \int_{Q_{\rho_2+ \eps}} | \nabla f|^2.
\end{multline}

\subsubsection{Step 3. The homogenization of the bad part.} 
In order to get rid of the $\delta$ part of  r.h.s of \eqref{eq:jb_strong:spatial:ini:fini'}, we homogenize it as follows. Let us drop  all but the middle term of the  l.h.s. of  \eqref{eq:jb_strong:spatial:ini:fini'}  and apply to both sides of  the resulting inequality $\dashint_0^h d l$ to get
  \begin{multline}\label{eq:jb_strong:spatial:ini:gm?}
 \dashint_0^h   \int_{Q_{\rho_2}}  \left| \Delta^{le_i} \V(\D u)  \sigma^k \eta^k \right|^2  dl  \le 
 24 \delta \dashint_0^h  \frac{|l|}{|h|}  \dashint_0^l    \int_{Q_{\rho_2}}   |\Delta^{\lambda e_i}  \V ( \D u)  \sigma^k \eta^k |^2 d \lambda dl \\
   +  
C (\delta, G(\vfi')) \left(1 + \frac{1}{\vfi'' (\delta_0)} \right) \frac{|h|^2}{({\rho_2}-{\rho_1})^2}    \int_{Q_{\rho_2+ \eps}}    \vfi \left(  | \nabla u  | \right)    +  \vfi( \delta_0 ) +  |h|^2  ({\rho_2}-{\rho_1})^2     \int_{Q_{\rho_2+ \eps}} | \nabla f|^2
\end{multline}
To estimate r.h.s. above we use the following inequality for a nonnegative $g$, valid in view of Tonelli's theorem
\[
 \dashint_0^h  \frac{|l|}{|h|}  \dashint_0^l g (\lambda) d \lambda dl =   \frac{1}{|h|^2} \int_0^h   g (\lambda) \left( \int_0^h \id_{ \{| \lambda | \le | l | \}}  dl \right) d \lambda \le    \frac{1}{|h|^2} \int_0^h   g (\lambda)  |h| d \lambda =  \dashint_0^h g (\lambda) d \lambda.
\]
Hence, using the above inequality to estimate the middle summand of r.h.s. of  \eqref{eq:jb_strong:spatial:ini:gm?}, we obtain
  \begin{multline}\label{eq:jb_strong:spatial:ini:gm?2}
 \dashint_0^h   \int_{Q_{\rho_2}}  \left| \Delta^{\lambda e_i} \V(\D u)  \sigma^k \eta^k \right|^2  d \lambda  \le \\
24 \delta \dashint_0^h  \int_{Q_{\rho_2}}  |\Delta^{\lambda e_i}  \V ( \D u)  \sigma^k \eta^k |^2 d \lambda    +  
 \left(1 + \frac{1}{\vfi'' (\delta_0)} \right) \frac{ C (\delta, G(\vfi'))  |h|^2}{({\rho_2}-{\rho_1})^2}  \!  \int_{Q_{\rho_2+ \eps}}    \vfi \left(  | \nabla u  | \right)     +  \vfi( \delta_0 ) + \\
  |h|^2 ({\rho_2}-{\rho_1})^2     \int_{Q_{\rho_2+ \eps}} | \nabla f|^2.
\end{multline}
We add \eqref{eq:jb_strong:spatial:ini:gm?2} to  \eqref{eq:jb_strong:spatial:ini:fini'},  where we choose  $l := h$, obtaining
  \begin{multline}\label{eq:jb_strong:spatial:ini:gm?3}
 \sup_{\tau \in I_{\rho^2_2}}  \int_{B_{\rho_2}} \left| \Delta^{he_i} u \;  \sigma^k \eta^k \right|^2  (\tau)+  \int_{Q_{\rho_2}}  \left| \Delta^{he_i} \V(\D u)  \sigma^k \eta^k \right|^2 +   \dashint_0^h   \int_{Q_{\rho_2}}  \left| \Delta^{\lambda e_i} \V(\D u) \sigma^k \eta^k  \right|^2   d \lambda \le \\  24 \delta \left(   \int_{Q_{\rho_2}}  |\Delta^{h e_i}  \V ( \D u) \sigma^k \eta^k  |^2 +  \dashint_0^h     \int_{Q_{\rho_2}}   |\Delta^{\lambda e_i}  \V ( \D u) \sigma^k \eta^k  |^2 d \lambda \right).
   +  \\
 \left(1 + \frac{1}{\vfi'' (\delta_0)} \right) \frac{C (\delta, G(\vfi')) |h|^2}{({\rho_2}-{\rho_1})^2}    \int_{Q_{\rho_2+ \eps}} \left(    \vfi \left(  | \nabla u  | \right)      +  \vfi( \delta_0 )  \right)+  |h|^2  ({\rho_2}-{\rho_1})^2   \int_{Q_{\rho_2+ \eps}} | \nabla f|^2.
\end{multline}
For $\delta \le \frac{1}{48}$ it produces
  \begin{multline}\label{eq:jb_strong:spatial:ini:gm?3'}
 \sup_{\tau \in I_{\rho^2_1}}  \int_{B_{\rho_1}} \left| \Delta^{he_i} u \;  \sigma^k \eta^k \right|^2  (\tau)+ \frac{1}{2} \int_{Q_{\rho_1}}  \left| \Delta^{he_i} \V(\D u)  \sigma^k \eta^k \right|^2  \\ \le |h|^2  \left(1 + \frac{1}{\vfi'' (\delta_0)} \right) \frac{C (G(\vfi')) }{({\rho_2}-{\rho_1})^2}    \int_{Q_{\rho_2+ \eps}} \left(   \vfi \left(  | \nabla u  | \right) +  \vfi( \delta_0 ) \right) +  |h|^2  ({\rho_2}-{\rho_1})^2   \int_{Q_{\rho_2+ \eps}} | \nabla f|^2,
\end{multline}
where we have used that $ \sigma^k \eta^k \equiv 1$ on $Q_{\rho_1}$.
\subsubsection{Step 4. Obtaining the thesis for the symmetric case \eqref{eq:jb_structure:weak}} 
We have reached the inequality that  implies an estimate for directional difference quotients $ h^{-1} \Delta^{(he_i, 0)} f$. However to finish this proof, we need to take care of the following technicality: recall that to obtain \eqref{eq:jb_strong:spatial:ini_3} from \eqref{eq:jb_strong:spatial:m3} we needed to take out $\frac{|h|}{{\rho_2}-{\rho_1}} $ from the argument of the shifted $\vfi$, for which we imposed $|h| \le {\rho_2}-{\rho_1}$. Therefore let us first fix ${\rho_2} > {\rho_1}$. Now \eqref{eq:jb_strong:spatial:ini:gm?3'} is 
an uniform estimate for any  $h>0, |h| \le {\rho_2}-{\rho_1}$ on directional difference quotients. This implies the existence of a weak derivative with the same bound (by compactness of balls in $L^2 (B_r)$, formula for discrete integration by parts of a difference quotient and l.w.s.c. of  $L^2 $ norms). Hence \eqref{eq:jb_strong:spatial:ini:gm?3'} gives
\begin{multline}
 \esssup_{\tau \in I_{\rho_1^2}}  \int_{B_{\rho_1}} \left|\nabla  u   \right|^2  (\tau)+  \int_{Q_{\rho_1}}  \left| \nabla \V(\D u) \right|^2 \le \\
  \left(1 + \frac{1}{\vfi'' (\delta_0)} \right) \frac{C (G(\vfi')) }{({\rho_2}-{\rho_1})^2}    \int_{Q_{\rho_2+ \eps}}  \left(   \vfi ( | \nabla u  |)+  \vfi( \delta_0 ) \right)  +  ({\rho_2}-{\rho_1})^2  \int_{Q_{\rho_2+ \eps}}    | \nabla f|^2,
\end{multline}
for any $\rho < R$. We use above  $3 |\nabla \D u|^2 \ge |\nabla^2 u|^2$ to write full second-order gradient in the last term of l.h.s.  \\
In the above estimate there are no more difference quotients. Particularly, $\eps> 0$ becomes a free parameter. Sending $\eps \to 0$, 
we have the first estimate \eqref{eq:jb_strong:2+_nabla} of the thesis. Via Lemma \ref{lem:korn} (Korn's inequality), we get the symmetric-gradient version  \eqref{eq:jb_strong:2+_nablaK}.
The assumption for the last estimate allows us to take $\delta_0 = 0$. This and \eqref{eq:jb_structure:diffa3} allows us to write the latter inequality \eqref{eq:jb_strong:2+_nabla'} of the thesis. Theorem \ref{th1} is proved.
\qed

\subsection{Proof of Theorem \ref{th2}} 
Observe that here we can arrive to the analogue of \eqref{eq:jb_strong:spatial:ini:fini}, without the assumption that $\vfi''$ is almost increasing. It was needed up to \eqref{eq:jb_strong:spatial:ini:fini} only to change the shift in \eqref{eq:jb_strong:spatial:m1}
\[
\text{from } \quad \vfi_{|\D u \circ T_{\lambda e_i}  | } \left( \frac{|h|}{{\rho_2}-{\rho_1}} | \nabla u \circ T_{\lambda e_i}  | \right) \quad \text{ to } \quad  \vfi_{|\nabla u \circ T_{\lambda e_i}  | } \left( \frac{|h|}{{\rho_2}-{\rho_1}} | \nabla u \circ T_{\lambda e_i}  | \right) 
\]
now we have full gradients, so no shift change is needed. Therefore we get the analogue of  \eqref{eq:jb_strong:spatial:ini:fini}, \emph{i.e.}
\begin{multline}\label{eq:jb_strong:spatial:ini:finiX}
\frac{1}{2}  \int_{B_{\rho_2}} \left| \Delta^{le_i} u \sigma^k \right|^2  (\tau)\eta^{2k}+  \int_{\tau_0}^\tau \int_{B_{\rho_2}}  \left| \Delta^{le_i} \V(\nabla u)\sigma^k\eta^k \right|^2  \le \\
 4 \delta  \int_{\tau_0}^\tau \int_{B_{\rho_2}}  |\Delta^{l e_i}  \V ( \nabla u) \sigma^k\eta^k |^2 +  6 \delta  \int_{\tau_0}^\tau   \dashint_0^l \frac{|l|}{|h|}   \int_{B_{\rho_2}}   |\Delta^{\lambda e_i}  \V ( \nabla u)  \; \sigma^k\eta^k  |^2 d \lambda  \\
   +  
C (\delta, G(\vfi'))  \frac{|h|^2}{({\rho_2}-{\rho_1})^2}   \dashint_0^l  \int_{Q_{\rho_2}} {\text {\Large [}} \vfi \left(  | \nabla u \circ T_{\lambda e_i}  | \right) + | \nabla u \circ T_{\lambda e_i} |^2 {\text {\Large ]}}  d \lambda + \\
\eps_0 \int_{\tau_0}^\tau  \int_{B_{\rho_2} }  \left|  \left(\Delta^{le_i} u \right)  \eta^k \sigma^k \right|^2 + \frac{ |h|^2}{4 \eps_0}  \int_{\tau_0}^\tau  \int_{B_{\rho_2} }  \dashint_0^l  | \nabla f \circ T_{\lambda e_i} |^2 d \lambda
\end{multline}
This gives the first estimate  \eqref{eq:jb_strong:2+_nablaX}, along the remainder of the proof of Theorem \ref{th1},  as long as $\int_{Q_{\rho_2 + \eps}} | \nabla u|^2$ is meaningful. 

Now, let us take in the full-gradient-analogue of  \eqref{eq:jb_strong:spatial:ini} a time-independent (on $[t_1, t_2]$) cutoff function $\equiv \eta (x)$. It yields for any $\tau \in [t_1, t_2]$ 
\begin{multline*}
\frac{1}{2}  \int_{B_{\rho_2}} \left| \Delta^{le_i} u  \eta^k \right|^2  (\tau)+  \int_{t_1}^{t_2} \int_{B_{\rho_2}}  \left| \Delta^{le_i} \V(\nabla u) \eta^k \right|^2 
\le 
\\ \frac{1}{2}  \int_{B_{\rho_2}} \left| \Delta^{le_i} u  \eta^k \right|^2  (t_1)+
  C (G (\vfi')) k
 \int_{t_1}^{t_2} \int_{B_{\rho_2}} 0+ \vfi'_{|\nabla u| } \left(|\Delta^{le_i} \nabla u| \right) | \Delta^{le_i} u|  \eta^{2k-1}|\nabla\eta|\sigma^{2k} \\
 + \eps_0  \int_{t_1}^{t_2} \int_{\Omega^\eps }  \left|  \left(\Delta^{le_i} u \right)  \eta^k \right|^2 + \frac{1}{4 \eps_0}   \int_{t_1}^{t_2}  \int_{\Omega^\eps }  \left| \Delta^{le_i} f \right|
 =: \\    \frac{1}{2}  \int_{B_{\rho_2}} \left| \Delta^{le_i} u  \eta^k \right|^2  (t_1) + C (G (\vfi'))  \int_{t_1}^{t_2} \int_{B_{\rho_2}}  ( 0 + B + \eps_0 C + F).
\end{multline*}
This implies the following analogue of \eqref{eq:jb_strong:spatial:ini:finiX}
\begin{multline*}
\frac{1}{2}  \int_{B_{\rho_2}} \left| \Delta^{le_i} u  \right|^2  (\tau)\eta^{2k}+   \int_{t_1}^{t_2} \int_{B_{\rho_2}}  \left| \Delta^{le_i} \V(\nabla u) \eta^k \right|^2  \le \\ \frac{1}{2}  \int_{B_{\rho_2}} \left| \Delta^{le_i} u \right|^2  (t_1)\eta^{2k}+
 4 \delta  \int_{t_1}^{t_2} \int_{B_{\rho_2}}  |\Delta^{l e_i}  \V ( \nabla u)  \eta^k |^2 +  6 \delta  \int_{t_1}^{t_2}   \dashint_0^l \frac{|l|}{|h|}   \int_{B_{\rho_2}}   |\Delta^{\lambda e_i}  \V ( \nabla u)  \;  \eta^k  |^2 d \lambda  \\
   +  
C (\delta, G(\vfi'))  \frac{|h|^2}{({\rho_2}-{\rho_1})^2}   \dashint_0^l  \int_{Q_{\rho_2}} \vfi \left(  | \nabla u \circ T_{\lambda e_i}  | \right)  d \lambda + \\
\eps_0  \int_{t_1}^{t_2} \int_{B_{\rho_2} }  \left|  \left(\Delta^{le_i} u \right)  \eta^k \right|^2 + \frac{ |h|^2}{4 \eps_0}  \int_{t_1}^{t_2} \int_{B_{\rho_2} }  \dashint_0^l  | \nabla f \circ T_{\lambda e_i} |^2 d \lambda,
\end{multline*}
which implies \eqref{eq:jb_strong:2+_nabla'X} along the respective lines of the proof of Theorem \ref{th1} (after renaming $\rho_2$ to $R_0$ and taking $\eta$ cutting off between $B_R$ and $B_{R_0}$). Theorem \ref{th2} is proved.
\qed

\section{Concluding remarks}\label{sec:crems:space}
As already mentioned at the beginning of the proof, thanks to the use of the equality \eqref{eq:strong:spatial:eff} and uniformization in $h$ (see the step 3 of the above proof), we avoided resorting to an additional covering argument as in \cite{DieEtt08}.

The estimate  \eqref{eq:jb_strong:2+_nabla}, among other advantages, involves explicit dependance on $G(\vfi')$. Hence it is uniform for growth functions that have a common  $G(\vfi')$, for instance for approximative quadratic potential $\vfi_{\lambda} $, presented in subsection \ref{subs:disc}. This in turn will prove very useful for the planned interior two-dimensional smoothness result.

\section{Appendix - Orlicz growths}\label{jb_structure_orlicz}
Here we gather some details on Orlicz structure. In principle, these are known facts, put together for reader's convenience. The exceptions may be\footnote{In the sense that we could not find the references, so it seems that these facts were never stated, but seem quite straightforward}: Proposition \ref{prop:jb_structure:'giveD} (saying that the property $\vfi' (t) \sim t \vfi''(t)$ implies $\delta_2$ for $\vfi$ and $\vfi^*$) and our version of Korn's inequality (Lemma \cite{lem:korn})  By the end of Appendix, we recall the standard examples for Orlicz growths that are admissible in our main results.
 The standard source for theory of Orlicz spaces are monographs of Musielak \cite{Mus83} and of Rao and Ren \cite{RaoRen91}, \cite{RaoRen02}. For our purposes it is more straightforward to follow monograph \cite{MNRR} by M\'alek, Ne\v{c}as, Rokyta \& R\r{u}\v{z}i\v{c}ka and paper of  Diening \& Ettwein \cite{DieEtt08}, because they are PDEs-oriented.  We begin with
 \subsection{$\eN$-functions.}\label{ssec:jb_structure_orlicz:N} The following definition agrees with that of a Young function in Section 1.2.5 of \cite{MNRR} or with Definition 2.1, \cite{DieKap13}.
\begin{mydef}\label{def:app_orlicz:nf}
A real function $\vfi: \er_+ \to \er_+$ is an \emph{$\eN$-function} iff there exists $\vfi'\!: \er_+ \to \er_+$ 
\begin{itemize}
\item that is right-continuous, non-decreasing\footnote{We use here the nomenclature >>non-decreasing<< and >>increasing<<.},
\item that satisfies $\vfi' (0) =0$,  $ \vfi' (t) >0$ for $t>0$ and $\vfi' (+ \infty^-) =+ \infty$ 
\end{itemize}
such that
\begin{equation}\label{def:eq:N}
\vfi (t) = \int_0^t \vfi' (s) \, ds 
\end{equation}
\end{mydef}
Formula \eqref{def:eq:N} is admissible, because  $\vfi'$ is nonnegative and measurable (as monotonous), thus Lebesgue-integrable. On the level on the above definition, $\vfi'$ is just a name for a function.  However, any $\eN$-function $\vfi$, being defined as an integral, is continuous (absolutely continuous on an interval) and a.e. differentiable, with its derivative a.e. equal to $\vfi'$. Moreover, $\vfi$ is convex, because $\vfi'$ is non-decreasing. In what follows, we will assume in fact more smoothness of admissible $\eN$-functions.\\ 
The concept of the $\eN$-function\footnote{Some claim that $\eN$ stands for >>nice<<.} generalizes the power function $\vfi (t) = \frac{1}{p} t^p$. Now, let us generalize its H\"older-conjugate $ \frac{1}{p'} t^{p'}, \; p' = \frac{p}{p-1}$. To this end, for a non-decreasing real function $g$ let us denote with $g^{-1}$ its generalized right-continuous inverse, given by $ g^{-1} (t):= \sup \{ {s \in \er_+}|\; g(s)  \le t \}$.  Now we  introduce
\begin{mydef}\label{def:app_orlicz:*} A \emph{complementary function} to an  $\eN$-function $\vfi $ is 
\[
\vfi^* (t) := \int_0^t (\vfi')^{-1} (s) \, ds 
\]
\end{mydef}
A complementary function is called also the conjugate one. It is also a $\eN$-function, in view of the above formula. It can be alternatively defined without using the generalized inverse by $\vfi^* (t) := \sup_{s \ge 0} (ts - \vfi (s)) $. Then, however, one does  not see immediately that $\vfi^*$ is a $\eN$-function. The equivalence of these two definitions is contained in Theorem 13.6 of \cite{Mus83}. Observe that $(\vfi^*)^* = \vfi$.
 \subsection{Regularity assumptions on $\eN$-functions} 
 The assumption widely used for studying regularity for systems with Orlicz growths, see p. 31 of \cite{MNRR}, is given by
\begin{mydef}
$\eN$-function $\vfi$ satisfies \emph{$\Delta_2$  condition} iff there exists a numerical constant $C$ for which
\begin{equation}\label{eq:jb_weak:delta2}
\vfi (t)  \sim \vfi (2 t)
\end{equation}
$t$-uniformly
In such case we denote the optimal numerical constant of \eqref{eq:jb_weak:delta2} by $\Delta_2 (\vfi)$. $\Delta_2 (\Phi)$ stands for the supremum of such constants over a family $\Phi$ of $\eN$-functions.
\end{mydef}
Any $\eN$-function is increasing, so in fact $ \Delta_2$  condition is equivalent to $ \vfi (2t)  \le \Delta_2 (\vfi) \, \vfi (t)$.
Let us state the basic connection of an $\eN$-function and its conjugate, where  $\Delta_2$  condition is used. It can be found as formulas (2.1), (2.3) of \cite{DieEtt08}

\begin{lem}[good $\vfi$ growth and Young's inequality] \label{lem:jb_weak:goodphi}
Assume that a family $\Psi$ of $\eN$-functions and their conjugates  $(\Psi = \Phi \cup  \Phi^* )$ satisfies common $\Delta_2$ condition. Then it holds  for any $\varphi \in \Psi$ and any $t, a, b \in \er_+$
\begin{equation}\label{eq:jb_weak:goodphi}
t \varphi' (t) \sim \varphi (t)
\end{equation}
\begin{equation}\label{eq:jb_weak:phistar}
\varphi^*( \varphi' (t) ) \sim \varphi (t)
\end{equation}
where all the constants depend only on the magnitude of $\Delta_2 (\Psi)$, thus are uniform for the family $\Psi$.\\
Moreover for any $\delta >0$
\begin{equation}\label{eq:jb_weak:Y}
ab \le \delta \vfi (a) + C(\delta) \vfi^* (b)
\end{equation}
where $C(\delta) $  depends only on $\delta$ an the magnitude of $\Delta_2 (\Psi)$, thus is uniform for the family $\Psi$.
\end{lem}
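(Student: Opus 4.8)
The plan is to obtain the three assertions from four elementary ingredients: the convexity of an $\eN$-function together with the representation $\vfi(t)=\int_0^t\vfi'$, the $\Delta_2$ condition \eqref{eq:jb_weak:delta2} for $\vfi$, the $\Delta_2$ condition for $\vfi^*$, and the pointwise Young inequality $ab\le\vfi(a)+\vfi^*(b)$ (immediate from $\vfi^*(b)=\sup_{s\ge0}(sb-\vfi(s))$). The $\Delta_2$ bounds are available for every member of $\Psi$ precisely because $\Psi=\Phi\cup\Phi^*$ carries a common $\Delta_2$ bound, and I would track every constant as an explicit function of $\Delta_2(\Psi)$, so that all resulting equivalences are uniform over the family.

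First I would prove \eqref{eq:jb_weak:goodphi}, which is the engine for the rest. The lower bound $\vfi(t)=\int_0^t\vfi'(s)\,ds\le t\,\vfi'(t)$ is immediate since $\vfi'$ is non-decreasing. For the upper bound, monotonicity of $\vfi'$ gives $t\,\vfi'(t)\le\int_t^{2t}\vfi'(s)\,ds\le\vfi(2t)$, and then \eqref{eq:jb_weak:delta2} yields $\vfi(2t)\le\Delta_2(\vfi)\,\vfi(t)$; the composite constant depends only on $\Delta_2(\Psi)$. This step uses only the $\Delta_2$ property of $\vfi$ itself, but since $\vfi^*\in\Psi$ the equivalence holds for $\vfi^*$ as well.

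For \eqref{eq:jb_weak:phistar} I would apply \eqref{eq:jb_weak:goodphi}, already proved, to $\vfi^*\in\Psi$, obtaining $s\,(\vfi^*)'(s)\sim\vfi^*(s)$. Since by Definition \ref{def:app_orlicz:*} the derivative of $\vfi^*$ is $(\vfi')^{-1}$, the substitution $s=\vfi'(t)$ (for which $(\vfi^*)'(\vfi'(t))=t$) turns this into $t\,\vfi'(t)\sim\vfi^*(\vfi'(t))$; chaining with \eqref{eq:jb_weak:goodphi} for $\vfi$ gives $\vfi(t)\sim t\,\vfi'(t)\sim\vfi^*(\vfi'(t))$. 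Alternatively one may start from the Young--Fenchel equality $\vfi(t)+\vfi^*(\vfi'(t))=t\,\vfi'(t)$ and apply \eqref{eq:jb_weak:goodphi} to both $\vfi$ and $\vfi^*$; either route keeps the constants controlled by $\Delta_2(\Psi)$.

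Finally, for \eqref{eq:jb_weak:Y} with $\delta\in(0,1)$ (the case $\delta\ge1$ being already contained in the plain Young inequality), I would apply $ab\le\vfi(a)+\vfi^*(b)$ to the rescaled pair $(\delta a,\,b/\delta)$, so $ab\le\vfi(\delta a)+\vfi^*(b/\delta)$. Convexity of $\vfi$ with $\vfi(0)=0$ bounds $\vfi(\delta a)\le\delta\,\vfi(a)$, and, since $1/\delta\ge1$, iterating \eqref{eq:jb_weak:delta2} for $\vfi^*$ a number $N=\lceil\log_2(1/\delta)\rceil$ of times bounds $\vfi^*(b/\delta)\le\Delta_2(\vfi^*)^{N}\vfi^*(b)$, so one may take $C(\delta)=\Delta_2(\Psi)^{\lceil\log_2(1/\delta)\rceil}$. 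The argument is routine; the only points deserving attention are the bookkeeping that keeps every constant a function of $\Delta_2(\Psi)$ alone (needed for the uniformity claim) and the mild care required by the identity $(\vfi^*)'=(\vfi')^{-1}$ and the Young equality when $\vfi'$ is merely right-continuous --- a point that is moot here since our standing assumptions give $\vfi\in C^2((0,\infty))$ with $\vfi''>0$, so $\vfi'$ is a strictly increasing bijection of $[0,\infty)$.
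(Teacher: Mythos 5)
Your argument is correct. Note, however, that the paper does not prove this lemma at all: the sentence immediately preceding the statement says that it ``can be found as formulas (2.1), (2.3) of \cite{DieEtt08}'', so the result is simply imported. Your derivation is a self-contained version of the same standard computation that appears in that reference: \eqref{eq:jb_weak:goodphi} from $\vfi(t)\le t\vfi'(t)\le\vfi(2t)\le\Delta_2(\vfi)\,\vfi(t)$; \eqref{eq:jb_weak:phistar} from applying \eqref{eq:jb_weak:goodphi} to $\vfi^*$ at $s=\vfi'(t)$ together with $(\vfi^*)'=(\vfi')^{-1}$; and \eqref{eq:jb_weak:Y} from the scaled Young inequality, convexity at $0$, and iterated $\Delta_2$ for $\vfi^*$.

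One point deserves a slightly sharper treatment than the dismissal at the end of your write-up. The identity $(\vfi^*)'(\vfi'(t))=t$ needs $\vfi'$ to be continuous and strictly increasing. You justify this by invoking the paper's $C^2$ assumption, but that comes from the good $\vfi'$ property, which is \emph{not} among the hypotheses of this particular lemma (only the common $\Delta_2$ condition on $\Psi=\Phi\cup\Phi^*$ is assumed). In the generalized-inverse setting this step is not entirely moot: one only has $(\vfi')^{-1}(\vfi'(t))\ge t$, which together with \eqref{eq:jb_weak:goodphi} for $\vfi^*$ still yields the lower estimate $\vfi^*(\vfi'(t))\ge c\,\vfi(t)$, while the upper estimate should be taken from the Young--Fenchel equality $\vfi(t)+\vfi^*(\vfi'(t))=t\vfi'(t)$, giving $\vfi^*(\vfi'(t))\le t\vfi'(t)\le\Delta_2(\vfi)\,\vfi(t)$. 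Combining these two one-sided arguments makes \eqref{eq:jb_weak:phistar} valid under exactly the stated hypotheses; otherwise the argument is fine as it stands when the good $\vfi'$ property is in force, which is the only regime in which the paper actually invokes the lemma.
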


It is common to further restrict the set of admissible $\eN$-functions in order to mimic even closer the $p$-growth. This is done by imposing the good $\vfi'$ property, compare Definition \ref{def:jb_weak:goodphi'}.

 The $\Delta_2$ condition for $\vfi$ does not imply the  $\Delta_2$ for $\vfi^*$. So it is a little surprise that one has
\begin{prop}[Good $\vfi'$ property implies other properties]\label{prop:jb_structure:'giveD}
Take $\eN$-function $\vfi$ that has good $\vfi'$ property. Then $\vfi$ satisfies also $\Delta_2$ condition and it holds $\Delta_2 (\vfi) \le C \left(G (\vfi') \right)$. Moreover, the complementary function $\vfi^*$ also enjoys good $(\vfi^*)'$ property and $G ((\vfi^*)') \le C( G (\vfi'))$. Hence also $\Delta_2 (\vfi^*) \le C \left(G (\vfi') \right)$.
\end{prop}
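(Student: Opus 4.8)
The plan is to exploit the good $\vfi'$ property $\vfi'(t) \sim t\vfi''(t)$ in the form $c_1 \le t\vfi''(t)/\vfi'(t) \le c_2$ with $c_1, c_2$ the optimal constants bounding $G(\vfi')$, and to turn this logarithmic-derivative estimate into polynomial two-sided bounds on $\vfi'$, hence $\Delta_2$ for $\vfi$. First I would observe that for $t>0$ the good $\vfi'$ property can be written as $c_1 \le \frac{d}{dt}\bigl(\log \vfi'(t)\bigr) \big/ \frac{d}{dt}\bigl(\log t\bigr) \le c_2$, so integrating from $t$ to $2t$ gives $c_1 \log 2 \le \log \vfi'(2t) - \log \vfi'(t) \le c_2 \log 2$, i.e. $2^{c_1} \vfi'(t) \le \vfi'(2t) \le 2^{c_2}\vfi'(t)$. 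Integrating this in the defining formula $\vfi(t)=\int_0^t \vfi'(s)\,ds$ — after the change of variables $s \mapsto 2s$ — yields $\vfi(2t) \le 2^{c_2+1}\vfi(t)$, which is $\Delta_2$ for $\vfi$ with $\Delta_2(\vfi) \le C(G(\vfi'))$. (One should note $\vfi \in C^2((0,\infty))$ makes all these manipulations legitimate, and the one-sided lower bound $2^{c_1+1}\vfi(t)\le\vfi(2t)$ comes for free, giving also that $\vfi$ does not degenerate.)

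Next I would transfer the property to $\vfi^*$. The cleanest route is via the identity $(\vfi^*)'= (\vfi')^{-1}$ (valid since $\vfi'$ is, under our smoothness, a strictly increasing $C^1$ bijection of $[0,\infty)$, so the generalized inverse is the genuine inverse). Differentiating $\vfi'\bigl((\vfi^*)'(t)\bigr)=t$ gives $\vfi''\bigl((\vfi^*)'(t)\bigr)\,(\vfi^*)''(t)=1$, hence $(\vfi^*)''(t) = 1/\vfi''\bigl((\vfi^*)'(t)\bigr)$. Writing $s := (\vfi^*)'(t)$, so that $t = \vfi'(s)$, the quantity to control is
\[
\frac{t\,(\vfi^*)''(t)}{(\vfi^*)'(t)} = \frac{\vfi'(s)}{s\,\vfi''(s)},
\]
which by the good $\vfi'$ property lies between $1/c_2$ and $1/c_1$. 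This is exactly good $(\vfi^*)'$ property with $G((\vfi^*)') \le C(G(\vfi'))$. Applying the already proved first part to $\vfi^*$ then yields $\Delta_2(\vfi^*) \le C(G((\vfi^*)')) \le C(G(\vfi'))$, and one may also record $(\vfi^*)^* = \vfi$ for bookkeeping.

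The main obstacle is not any single estimate but making sure the manipulations involving $\vfi'$, its inverse, and the second derivatives are justified under the stated regularity $\vfi \in C^2((0,\infty)) \cap C^1([0,\infty))$ together with the $\eN$-function normalizations ($\vfi'(0)=0$, $\vfi'>0$ on $(0,\infty)$, $\vfi'(+\infty)=+\infty$): these guarantee $\vfi'$ is a $C^1$-diffeomorphism of $(0,\infty)$ onto itself so that $(\vfi^*)'=(\vfi')^{-1}$ is $C^1$ and the chain-rule computation of $(\vfi^*)''$ is valid, and that $\vfi''>0$ on $(0,\infty)$ (forced by $\vfi'(t)\sim t\vfi''(t)$ with $\vfi'(t)>0$). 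Behaviour at $t=0$ and $t\to\infty$ must be handled by the monotone convergence already built into Definition \ref{def:app_orlicz:nf} rather than by pointwise values of $\vfi''$. Once these regularity points are pinned down, everything else is a short computation.
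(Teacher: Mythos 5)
Your proof is correct and follows essentially the same route as the paper: integrate the logarithmic-derivative bound coming from $\vfi'(t)\sim t\vfi''(t)$ to get $\vfi'(2t)\sim\vfi'(t)$ and hence $\Delta_2$, then differentiate the identity $\vfi'\bigl((\vfi^*)'(t)\bigr)=t$ to show $(\vfi^*)''(t)=1/\vfi''\bigl((\vfi^*)'(t)\bigr)$ and read off good $(\vfi^*)'$ by substituting $s=(\vfi^*)'(t)$. The only differences are cosmetic: you track both sides of the equivalence explicitly with $c_1,c_2$, whereas the paper keeps just the bound needed for the nontrivial half of $\Delta_2$, and you are a bit more explicit about the regularity of $\vfi'$ that justifies the inverse-function-theorem computation.
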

\begin{proof}
First we show that $\Delta_2 (\vfi) \le C \left(G (\vfi') \right)$. Formula \eqref{eq:jb_weak:goodphi'} and Definition \ref{def:app_orlicz:nf} give for $s> 0$
\[
\frac{ \varphi'' (s) }{\varphi' (s)} \le \left(G (\vfi')s \right) ^{-1}
\]
so we have after integration over $(\tau, 2\tau)$
\[
\ln \left( \varphi' (2\tau) \right) \le \ln \left( \varphi' (\tau) \right) + \ln 2^\frac{1}{G (\vfi') } = \ln \left( \varphi' (\tau) 2^\frac{1}{G (\vfi') } \right).
\]
This by monotonicity of $\ln$ gives
\[
\varphi' (2 \tau) \le  \varphi' ( \tau) 2^\frac{1}{G (\vfi') } 
\]
which after integration over $(0, t)$ gives  the non-trivial part of $ \Delta_2 $ for $\vfi$. Next we focus on the complementary function $\vfi^*$. It is an  $\eN$-function and by its definition, see Definition \ref{def:app_orlicz:*},  $(\vfi^*(t))' =  (\vfi')^{-1} (t) $. The good $\vfi'$ property formula \eqref{eq:jb_weak:goodphi'} shows that $\vfi''$ is positive for positive arguments. This, together with fact that $(\vfi')^{-1} (t) =0$ only for $t=0$, gives for $t > 0$
\[
(\vfi^*(t))'' =  \left( (\vfi')^{-1} (t)\right)' =   \frac{1}{\vfi'' \left( (\vfi')^{-1} (t) \right)} :=I
\]
Next we use good $\vfi'$ property at $s =   (\vfi')^{-1} (t)$ to get
\[
I \sim    \frac{ (\vfi')^{-1} (t)}{\vfi' \left(  (\vfi')^{-1} (t) \right)} =     \frac{ (\vfi^*(t))' }{t} 
\]
where for the equality we again use  $(\vfi^*(t))' =  (\vfi')^{-1} (t) $. We put together both expressions above that contain $I$ and obtain good $(\vfi^*)'$ property and $G ((\vfi^*)') \le C( G (\vfi'))$. Consequently, we can use first part of this proposition for $\vfi^*$  to write  $\Delta_2 (\vfi^*) \le C \left(G ((\vfi^*)')\right) \le C( G (\vfi')) $. The second inequality follows from the last-but-one sentence.
\end{proof}
\begin{rem}
Proposition \ref{prop:jb_structure:'giveD} implies that in Lemma \ref{lem:jb_weak:goodphi}, used for $\eN$-functions with good $\vfi'$ property, we have 
\[\Delta_2 (\Psi) \le G (\{ {\varphi'} | \; \varphi \in \Phi \})\] 
Hence we control in  Lemma \ref{lem:jb_weak:goodphi} the constants that depend on $\Delta_2 (\Psi)$, which involves $\eN$-function and their conjugates, with $C \left( G (\{ {\varphi'} | \; \varphi \in \Phi \}) \right)$, where the conjugates are not present.
\end{rem}
We will use the above remark as an inherent part of Lemma \ref{lem:jb_weak:goodphi}, when we deal with $\eN$-functions with the good $\vfi'$ property, without referring to it directly.\\
Having assumed good $\vfi'$ property, we are in fact not far away from the $p$-case. More precisely it holds (compare (2.3), (2.4) in \cite{DieKapSch12}) 
\begin{prop}[Boyd indices]\label{prop:jb_structure:polyGrowth}
For an $\eN$-function $\vfi$ that has good $\vfi'$ property, there exist such $q_1 \le  q_2$ from $(1, \infty)$ that for any $t, a \in \er_+$
\[{C^{-1}(G (\vfi'))} (a^{q_1} \! \wedge a^{q_2}) \, \vfi (t) \le  \vfi (at) \le C (G (\vfi')) (a^{q_1} \! \vee a^{q_2}) \, \vfi (t)\]
\end{prop}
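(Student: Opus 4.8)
The plan is to integrate the good $\vfi'$ property twice: once to pass from $\vfi''$ to $\vfi'$, and once more to pass from $\vfi'$ to $\vfi$, exploiting that the relation $\vfi'(t)\sim t\vfi''(t)$ is nothing but a two-sided bound on the logarithmic derivative of $\vfi'$. This is the same device as in (2.3)--(2.4) of \cite{DieKapSch12}.

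First I would rewrite Definition \ref{def:jb_weak:goodphi'} in the form that there is $G=G(\vfi')\in[1,\infty)$ with
\[
\frac{1}{G\,s}\le\frac{\vfi''(s)}{\vfi'(s)}\le\frac{G}{s}\qquad\text{for all }s>0,
\]
which is licit since $\vfi\in C^{2}((0,\infty))$ and $\vfi'(s)>0$ for $s>0$. Fixing $a\ge1$ and $\tau>0$ and integrating $\frac{d}{ds}\ln\vfi'(s)=\vfi''(s)/\vfi'(s)$ over $[\tau,a\tau]$ gives $\frac1G\ln a\le\ln\bigl(\vfi'(a\tau)/\vfi'(\tau)\bigr)\le G\ln a$, i.e.
\[
a^{1/G}\,\vfi'(\tau)\le\vfi'(a\tau)\le a^{G}\,\vfi'(\tau).
\]
Integrating this in $\tau$ over $[0,t]$ and using $\int_0^t\vfi'(a\tau)\,d\tau=\tfrac1a\vfi(at)$ together with $\int_0^t\vfi'(\tau)\,d\tau=\vfi(t)$ (both instances of \eqref{def:eq:N}), one obtains
\[
a^{1+1/G}\,\vfi(t)\le\vfi(at)\le a^{1+G}\,\vfi(t)\qquad(a\ge1,\ t\ge0).
\]
This dictates the choice $q_1:=1+\tfrac1G$ and $q_2:=1+G$, which lie in $(1,\infty)$ and satisfy $q_1\le q_2$ since $G\ge1$.

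Next I would handle $0<a<1$ by applying the last display with $b:=a^{-1}\ge1$ at base point $at$, which yields $a^{-q_1}\vfi(at)\le\vfi(t)\le a^{-q_2}\vfi(at)$, i.e. $a^{q_2}\vfi(t)\le\vfi(at)\le a^{q_1}\vfi(t)$. Combining both regimes (and the trivial cases $a=0$ or $t=0$) gives in every case $(a^{q_1}\wedge a^{q_2})\,\vfi(t)\le\vfi(at)\le(a^{q_1}\vee a^{q_2})\,\vfi(t)$, so the asserted inequality holds, in fact with $C(G(\vfi'))=1$ (any larger value being of course also admissible).

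There is no genuine obstacle here; the argument is a routine logarithmic integration. The only points requiring a little care are that $\vfi''$ may be unbounded near $0$ (harmless, since only the quotient $\vfi''/\vfi'=O(1/s)$ is ever integrated), that the good $\vfi'$ property is assumed only on $(0,\infty)$, so the outer integration in $\tau$ must be justified through absolute continuity of $\vfi'$ on $[0,t]$ (guaranteed by $\vfi\in C^{1}([0,\infty))$ and the monotone integral representation of $\vfi'$ in Definition \ref{def:app_orlicz:nf}), and that the strict bound $q_1>1$ is automatic because $G<\infty$.
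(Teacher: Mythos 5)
Your proof is correct, and the paper itself states this proposition without a proof, merely pointing to formulas (2.3), (2.4) of \cite{DieKapSch12}. The logarithmic-integration device you use is precisely the technique the paper employs one step earlier, in the proof of Proposition \ref{prop:jb_structure:'giveD}, to extract the $\Delta_2$ bound from the good $\vfi'$ property; you simply run it on a generic dilation factor $a$ rather than $a=2$ and then integrate once more in the outer variable, after which the two regimes $a\ge 1$ and $a<1$ combine to give the stated inequality with $C(G(\vfi'))=1$, $q_1=1+1/G$, $q_2=1+G$. The exponents land in $(1,\infty)$ since $G(\vfi')\ge 1$ by the self-consistency of $\sim$. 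The only technicality is the one you flag: the pointwise bound $a^{1/G}\vfi'(\tau)\le\vfi'(a\tau)\le a^{G}\vfi'(\tau)$ must be integrated against $d\tau$ over $[0,t]$, which is licit because $\vfi'$ is nonnegative and integrable by Definition \ref{def:app_orlicz:nf} (with $\vfi'(0)=0$ handling the endpoint), so no further absolute-continuity argument is actually needed. In short: correct, elementary, and in the same spirit as the surrounding appendix.
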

Numbers $q_1, q_2$ are sometimes referred to as the Boyd indices. For more on the proximity to the polynomial case and for examples of $\vfi$ functions that enjoy the good $\vfi'$ property and are not of $p$-type, see subsection \ref{subs:disc}.
\subsection{ >>Square root<< of an $\eN$-function and tensor  $\V$.}
In order to gain quadratic structure in estimates we introduce the following >>square root<< of an $\eN$-function $\vfi$ and the associated tensor $\V : Sym^{d \times d} \to Sym^{d \times d}$.

\begin{mydef}[$\Vfi$ and $\V$] \label{def:jb_weak:V}
Let $\vfi$ be an $\eN$-function. We define 
\[\Vfi' := \sqrt{ t \vfi' (t)}, \qquad \V := \partial_Q \Vfi ( |Q|)\]
\end{mydef}
It holds
\begin{lem}\label{lem:jb_weak_sqrtvfi}
For an $\eN$-function $\vfi$, its >>square root<< $\Vfi$ is also an $\eN$-function. $\Vfi$ inherits $\Delta_2$ condition and good $\vfi' $ property from $\vfi$, with $\Delta_2( \Vfi) \le C \left(\Delta_2( \vfi) \right), \, G( \Vfi') \le C  \left( G ( \vfi') \right)$. Moreover, if  $\vfi$ enjoys the good $\vfi'$ property, then uniformly in $t > 0$ holds
\[
\Vfi'' (t) \sim  \sqrt{ \vfi'' (t)} 
\]
with constants in $\sim$ depending only on $G ( \vfi') $.
\end{lem}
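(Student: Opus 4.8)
The plan is to argue directly from the defining relation $\Vfi'(t)=\sqrt{t\,\vfi'(t)}$, that is $\Vfi(t)=\int_0^t\sqrt{s\,\vfi'(s)}\,ds$, and to check the axioms of Definition~\ref{def:app_orlicz:nf} for $\Vfi'$. First I would note that $t\mapsto t\,\vfi'(t)$ is a product of two non-negative, non-decreasing, right-continuous functions, hence is itself non-negative, non-decreasing and right-continuous; as $\sqrt{\,\cdot\,}$ is continuous and increasing, so is $\Vfi'$. Moreover $\Vfi'(0)=0$, $\Vfi'(t)=\sqrt{t\,\vfi'(t)}>0$ for $t>0$ (because $\vfi'(t)>0$ there), and $\Vfi'(t)\to\infty$ as $t\to\infty$ since $\vfi'(t)\to\infty$; thus $\Vfi$ is an $\eN$-function, its convexity being automatic from the monotonicity of $\Vfi'$. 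When in addition $\vfi\in C^2((0,\infty))\cap C^1([0,\infty))$, the map $t\,\vfi'(t)$ is $C^1$ and strictly positive on $(0,\infty)$, so $\Vfi'$ is $C^1$ there, and $\Vfi'(t)\to 0=\Vfi'(0)$ as $t\to 0^+$; hence $\Vfi\in C^2((0,\infty))\cap C^1([0,\infty))$, which is the regularity needed to speak of the good $\Vfi'$ property.

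The heart of the matter is the elementary differentiation
\[
\Vfi''(t)=\frac{d}{dt}\sqrt{t\,\vfi'(t)}=\frac{\vfi'(t)+t\,\vfi''(t)}{2\sqrt{t\,\vfi'(t)}},
\qquad\text{hence}\qquad
t\,\Vfi''(t)=\frac{t\,\vfi'(t)+t\cdot\bigl(t\,\vfi''(t)\bigr)}{2\sqrt{t\,\vfi'(t)}}.
\]
Writing the optimal constants of the good $\vfi'$ property as $c_1\,\vfi'(t)\le t\,\vfi''(t)\le c_2\,\vfi'(t)$ (these being what $G(\vfi')$ encodes), the numerator lies between $(1+c_1)\,t\,\vfi'(t)$ and $(1+c_2)\,t\,\vfi'(t)$, so $\tfrac{1+c_1}{2}\Vfi'(t)\le t\,\Vfi''(t)\le\tfrac{1+c_2}{2}\Vfi'(t)$; this is exactly the good $\Vfi'$ property with $G(\Vfi')\le C(G(\vfi'))$. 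For the asymptotics $\Vfi''(t)\sim\sqrt{\vfi''(t)}$ I would reuse the same identity: the numerator $\vfi'(t)+t\,\vfi''(t)$ is comparable to $t\,\vfi''(t)$, while $\sqrt{t\,\vfi'(t)}$ is comparable to $\sqrt{t\cdot t\,\vfi''(t)}=t\sqrt{\vfi''(t)}$, both by the good $\vfi'$ property; dividing, $\Vfi''(t)\sim \frac{t\,\vfi''(t)}{t\sqrt{\vfi''(t)}}=\sqrt{\vfi''(t)}$, with constants depending only on $G(\vfi')$.

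Finally there is the $\Delta_2$ inheritance. Under the good $\vfi'$ hypothesis this comes for free: the good $\Vfi'$ property just established feeds into Proposition~\ref{prop:jb_structure:'giveD} to give $\Delta_2(\Vfi)\le C(G(\Vfi'))\le C(G(\vfi'))$. To get $\Delta_2(\Vfi)\le C(\Delta_2(\vfi))$ under the mere $\Delta_2$ assumption on $\vfi$, I would argue by hand with right derivatives: convexity yields $\Vfi(t)\le t\,\Vfi'(t)$ and, splitting the integral, $\Vfi(t)\ge\int_{t/2}^{t}\Vfi'(s)\,ds\ge\tfrac{t}{2}\Vfi'(t/2)$, so $\frac{\Vfi(2t)}{\Vfi(t)}\le 4\,\frac{\Vfi'(2t)}{\Vfi'(t/2)}=8\sqrt{\frac{\vfi'(2t)}{\vfi'(t/2)}}$; then the standard consequences $\vfi'(2t)\le\vfi(4t)/(2t)$, $\vfi'(t/2)\ge 2\vfi(t/2)/t$ together with $\vfi(4t)\le\Delta_2(\vfi)^3\vfi(t/2)$ bound the remaining ratio by a constant depending only on $\Delta_2(\vfi)$. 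I expect this last point --- keeping the bookkeeping clean when $\vfi$ is assumed only $\Delta_2$, hence possibly not differentiable --- to be the sole mildly delicate step; everything else is a direct computation.
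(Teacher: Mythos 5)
Your proof is correct. The paper itself gives no proof of this lemma, only the one-line remark ``It follows from a computation, one can consult also Lemma 25 of \cite{DieEtt08}''; your direct differentiation of $\Vfi'=\sqrt{t\vfi'(t)}$, the ensuing estimate $t\Vfi''(t)\sim\Vfi'(t)$ and $\Vfi''\sim\sqrt{\vfi''}$, and the hands-on $\Delta_2$ bookkeeping via $\Vfi(2t)/\Vfi(t)\le 8\sqrt{\vfi'(2t)/\vfi'(t/2)}$ are exactly the computation being alluded to. One small point worth flagging as a merit: you correctly derive $\Delta_2(\Vfi)\le C(\Delta_2(\vfi))$ using only the $\Delta_2$ hypothesis on $\vfi$ (not the good $\vfi'$ property), which is what the statement actually claims and which a shortcut through Proposition~\ref{prop:jb_structure:'giveD} would not deliver.
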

It follows from a computation, one can consult also Lemma 25 of  \cite{DieEtt08}.
\subsection{Connection between tensor $\A$ and $\eN$-function $\vfi$} Assumption \ref{ass:jb_weak:growth_gen} provides connection between the thoroughly described Orlicz-growth functions and the tensor $\A$. \\
Observe that for  $P : \er \to  Sym^{d \times d}$ and $\A \circ P : \er \to  Sym^{d \times d}$  differentiable at $s$, Assumption \ref{ass:jb_weak:growth_gen} gives
\begin{equation}\label{eq:jb_structure:diffa1}
\begin{aligned}
\partial_s (\A(P)) : \partial_s P  & \ge  c \vfi'' (|P|) |\partial_s P|^2 \\
| \partial_s (\A(P)) |& \le C \vfi'' (|P|) |\partial_s P|
\end{aligned}
\end{equation}
How does one find for a certain tensor $\A$ the appropriate $\vfi$ of Assumption \ref{ass:jb_weak:growth_gen}? The easiest way is to use the potential of $\A$, if it exists. Therefore it is common to consider the following 
\begin{ass}[Orlicz growth of  $\A$ - potential case]\label{ass:jb_weak:growth_pot}
For a certain  $\eN$-function $\vfi$ that enjoys the good $\vfi' $ property, tensor $\A$ is given by the formula
\begin{equation*}
\A(Q) : = \partial_Q \vfi ( |Q|),
\end{equation*}
where $Q \in Sym^{d \times d}$.
\end{ass}
We have in view of Lemma 21 of \cite{DieEtt08}
\begin{prop}\label{prop:jb_structure:mon_potT}
For an $\eN$-function $\vfi$ that enjoys the good $\vfi' $ property, the tensor $\T$  given by \[\T (Q) :=  \partial_Q \vfi ( |Q|), \qquad Q \in Sym^{d \times d}\] satisfies
$ \T (0) = 0 $ and otherwise  $\T (Q) = \vfi' ( |Q|) \frac{Q}{|Q|}$. Moreover
\begin{equation*}
\begin{aligned}
(\T(P) - \T(Q) ) : (P- Q) & \ge  c(G (\vfi')) \vfi'' (|P| + |Q|) |P-Q|^2 \\
| \T(P) - \T(Q)|& \le C(G (\vfi')) \vfi'' (|P| + |Q|) |P-Q|
\end{aligned}
\end{equation*}
for any $P, Q \in Sym^{d \times d}$.
\end{prop}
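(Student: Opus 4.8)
The plan is to read the explicit form of $\T$ off the chain rule, and then to prove the two inequalities by representing $\T(P)-\T(Q)$ as a line integral of the Fr\'echet derivative of $\T$, which the good $\vfi'$ property controls pointwise by $\vfi''$.

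\emph{Explicit form of $\T$.} For $Q\neq 0$ the chain rule gives $\partial_{Q_{ij}}\vfi(|Q|)=\vfi'(|Q|)\,Q_{ij}/|Q|$, i.e. $\T(Q)=\vfi'(|Q|)\,Q/|Q|$. At $Q=0$ one uses $\vfi'(0)=0$ from Definition~\ref{def:app_orlicz:nf}: then $\vfi(|Q|)/|Q|\to 0$ as $Q\to 0$, so $Q\mapsto\vfi(|Q|)$ is differentiable at the origin with vanishing gradient, hence $\T(0)=0$. Since moreover $|\T(R)|=\vfi'(|R|)\to 0$ as $R\to 0$, the map $\T$ is continuous on all of $Sym^{d\times d}$ and of class $C^1$ away from the origin.

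\emph{Pointwise control of the derivative.} For $R\neq 0$ and $H\in Sym^{d\times d}$, differentiating $s\mapsto\T(R+sH)$ at $s=0$ gives the fourth–order tensor
\[ \mathrm{D}\T(R)[H]=\vfi''(|R|)\,\frac{R:H}{|R|}\,\frac{R}{|R|}+\frac{\vfi'(|R|)}{|R|}\Big(H-\frac{(R:H)\,R}{|R|^2}\Big), \]
so that, with $\mu:=(R:H)^2/(|R|^2|H|^2)\in[0,1]$ by Cauchy--Schwarz,
\[ \mathrm{D}\T(R)[H]:H=|H|^2\Big(\mu\,\vfi''(|R|)+(1-\mu)\,\frac{\vfi'(|R|)}{|R|}\Big) \]
is a convex combination of $\vfi''(|R|)|H|^2$ and $\frac{\vfi'(|R|)}{|R|}|H|^2$. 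By the good $\vfi'$ property (Definition~\ref{def:jb_weak:goodphi'}) one has $\frac1{G(\vfi')}\vfi''(|R|)\le\frac{\vfi'(|R|)}{|R|}\le G(\vfi')\,\vfi''(|R|)$, whence
\[ \tfrac1{G(\vfi')}\vfi''(|R|)\,|H|^2\le\mathrm{D}\T(R)[H]:H\le G(\vfi')\,\vfi''(|R|)\,|H|^2,\qquad |\mathrm{D}\T(R)[H]|\le\big(1+2G(\vfi')\big)\vfi''(|R|)\,|H|. \]

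\emph{Line integral and segment average.} Put $Q_\theta:=Q+\theta(P-Q)$. Since $\T$ is $C^1$ off the origin, continuous at the origin, and $\int_0^1\vfi''(|Q_\theta|)\,d\theta<\infty$ (the only possible singularity is a single transversal zero of $\theta\mapsto|Q_\theta|$, near which $|Q_\theta|\sim|\theta-\theta_0|$, and $\vfi''$ is integrable near $0$ because $\int_0^1\vfi''=\vfi'(1)<\infty$), the fundamental theorem of calculus yields $\T(P)-\T(Q)=\int_0^1\mathrm{D}\T(Q_\theta)[P-Q]\,d\theta$. Contracting with $P-Q$ and inserting the bounds above, both claimed inequalities reduce to the segment–average equivalence $\int_0^1\vfi''(|Q_\theta|)\,d\theta\sim\vfi''(|P|+|Q|)$ with constants depending only on $G(\vfi')$. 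For the lower bound one notes that $\max(|Q|,|P|)\ge\tfrac12(|P|+|Q|)$, say $=|P|$, so that $|Q_\theta|\ge|P|-(1-\theta)|P-Q|\ge\tfrac14(|P|+|Q|)$ for $\theta$ in an interval of length $\tfrac14$, and, since $\vfi''(t)\sim\vfi'(t)/t$ with $\vfi'$ non-decreasing and satisfying $\Delta_2$ (Proposition~\ref{prop:jb_structure:'giveD}), $\vfi''(|Q_\theta|)\gtrsim\vfi''(|P|+|Q|)$ there; for the upper bound $|Q_\theta|\le|P|+|Q|$ throughout, and the polynomial two-sided bounds of Proposition~\ref{prop:jb_structure:polyGrowth} with lower Boyd index $q_1>1$ make $\vfi''$ integrable along the whole segment and comparable to its endpoint value. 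Equivalently, this step is Lemma~21 of \cite{DieEtt08}; the only addition beyond loc.\ cit.\ is that its constants, there phrased through $\Delta_2(\vfi)$ and $\Delta_2(\vfi^*)$, collapse to $C(G(\vfi'))$ by Proposition~\ref{prop:jb_structure:'giveD}. The genuinely delicate point is the upper bound in this equivalence: when $q_1<2$ the second derivative $\vfi''$ blows up at the origin, so a segment $\theta\mapsto Q_\theta$ passing through or near $0$ must be handled, and it is precisely the lower Boyd index $q_1>1$ that keeps $\int_0^1\vfi''(|Q_\theta|)\,d\theta$ finite and comparable to $\vfi''(|P|+|Q|)$; everything else is the chain rule together with bookkeeping of constants via the good $\vfi'$ property.
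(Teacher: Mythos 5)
Your proposal is correct, and it fills in the argument that the paper only cites: the paper simply attributes Proposition~\ref{prop:jb_structure:mon_potT} to Lemma~21 of \cite{DieEtt08} without reproducing a proof, whereas you reconstruct exactly that argument — chain rule for the explicit form of $\T$, line-integral representation $\T(P)-\T(Q)=\int_0^1\mathrm{D}\T(Q_\theta)[P-Q]\,d\theta$, pointwise bounds $\mathrm{D}\T(R)[H]:H\sim\vfi''(|R|)|H|^2$ and $|\mathrm{D}\T(R)[H]|\lesssim\vfi''(|R|)|H|$ from the good $\vfi'$ property, and reduction to the segment-average equivalence $\int_0^1\vfi''(|Q_\theta|)\,d\theta\sim\vfi''(|P|+|Q|)$. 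This is the same route as the cited source, so there is no conceptual divergence from the paper; your only addition, which you correctly flag, is the bookkeeping that collapses the $\Delta_2(\vfi)$- and $\Delta_2(\vfi^*)$-dependent constants of loc.\ cit.\ to $C(G(\vfi'))$ via Proposition~\ref{prop:jb_structure:'giveD}. One cosmetic remark: in the lower bound for the segment average you implicitly use that $\vfi''(s)\sim\vfi''(t)$ when $t/4\le s\le t$, which does follow from $\vfi''(t)\sim\vfi'(t)/t$ plus the doubling of $\vfi'$ established in Proposition~\ref{prop:jb_structure:'giveD}, but this comparability of $\vfi''$ under bounded dilations is worth stating explicitly since $\vfi''$ itself is not assumed monotone; and for the upper bound, when the segment passes near the origin you should record that this forces $|P-Q|\sim|P|+|Q|$, which is what makes the $|\theta-\theta_0|^{q_1-2}$ integrability argument homogeneous. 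Neither point is a gap — both are routine — but spelling them out would make the final estimate watertight.
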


Taking in the above proposition $\T := \A$, we have
\begin{cor}\label{prop:jb_structure:mon_pot}
Assumption \ref{ass:jb_weak:growth_pot} implies  Assumption \ref{ass:jb_weak:growth_gen}.
\end{cor}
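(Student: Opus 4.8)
The plan is to obtain the Corollary as an immediate specialization of Proposition \ref{prop:jb_structure:mon_potT}. Under Assumption \ref{ass:jb_weak:growth_pot} the tensor $\A$ is, by definition, $\A(Q) = \partial_Q \vfi(|Q|)$ for an $\eN$-function $\vfi$ enjoying the good $\vfi'$ property; this is exactly the hypothesis of Proposition \ref{prop:jb_structure:mon_potT} applied with $\T := \A$. First I would invoke that proposition to record, for all $P, Q \in Sym^{d\times d}$,
\[
(\A(P)-\A(Q)):(P-Q) \ge c(G(\vfi'))\,\vfi''(|P|+|Q|)\,|P-Q|^2, \qquad |\A(P)-\A(Q)| \le C(G(\vfi'))\,\vfi''(|P|+|Q|)\,|P-Q|.
\]
Then I would simply set $c := c(G(\vfi'))$ and $C := C(G(\vfi'))$ in Assumption \ref{ass:jb_weak:growth_gen}: these are constants depending only on the structural quantity $G(\vfi')$ and not on $P$ or $Q$, so the two inequalities above are precisely what Assumption \ref{ass:jb_weak:growth_gen} demands. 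That concludes the argument.

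I expect no genuine obstacle here: the entire content sits in Proposition \ref{prop:jb_structure:mon_potT} (which is Lemma 21 of \cite{DieEtt08}). If a self-contained proof were wanted, the one nontrivial point would be the passage from the variable argument $|Q + t(P-Q)|$, $t \in [0,1]$, arising after writing $\A(P)-\A(Q) = \int_0^1 \frac{d}{dt}\A(Q+t(P-Q))\,dt$ and differentiating the potential $\partial_Q \vfi(|Q|) = \vfi'(|Q|)\,Q/|Q|$, to the fixed argument $|P|+|Q|$; this uses $|Q+t(P-Q)| \le |P|+|Q|$ together with the comparability $\vfi'(s)/s \sim \vfi''(s)$ (the good $\vfi'$ property) to control both the second-derivative tensor of $\vfi(|\cdot|)$ and its monotone behaviour in the argument. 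But since Proposition \ref{prop:jb_structure:mon_potT} is already established, the proof reduces to the single step ``take $\T := \A$''.
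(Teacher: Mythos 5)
Your proposal is correct and coincides with the paper's own argument: the Corollary is obtained precisely by specializing Proposition \ref{prop:jb_structure:mon_potT} with $\T := \A$, which is exactly what the paper does. The additional sketch of how one would prove the Proposition itself is accurate but not needed, since that result is taken from Lemma 21 of \cite{DieEtt08}.
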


We have stated Proposition \ref{prop:jb_structure:mon_potT} for an unspecified $\T$ instead of $\A$ because we will need also the following observation. Even if merely Assumption \ref{ass:jb_weak:growth_gen} is valid (and not Assumption \ref{ass:jb_weak:growth_pot}), the tensor $\V$ of Definition \ref{def:jb_weak:V} is itself derived from the potential $\Vfi$. Therefore we can use Proposition \ref{prop:jb_structure:mon_potT} to formulate
\begin{cor}\label{prop:jb_structure:mon_potV}
Assume that an $\eN$-function $\vfi$ has the good $\vfi' $ property. Then for the >>square root<< tensor $\V$ of Definition \ref{def:jb_weak:V} holds
\begin{equation*}
\begin{aligned}
(\V(P) - \V(Q) ) \!: \! (P- Q) & \ge  c\, (G (\vfi')) \, \Vfi'' (|P| + |Q|) |P-Q|^2 \\
| \V(P) - \V(Q)|& \le C(G (\vfi')) \Vfi'' (|P| + |Q|) |P-Q|
\end{aligned}
\end{equation*}
for any $P, Q \in Sym^{d \times d}$.
\end{cor}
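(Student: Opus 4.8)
The plan is to deduce the statement directly from Proposition \ref{prop:jb_structure:mon_potT}, specializing the abstract potential tensor $\T$ there to $\V$ and the underlying $\eN$-function to the \emph{square root} $\Vfi$. First I would recall that, by Definition \ref{def:jb_weak:V}, $\V = \partial_Q \Vfi(|Q|)$, so $\V$ is \emph{exactly} the potential tensor generated by $\Vfi$. Hence the only hypothesis of Proposition \ref{prop:jb_structure:mon_potT} that needs checking is that $\Vfi$ is an $\eN$-function possessing the good $\vfi'$ property. This is precisely the content of Lemma \ref{lem:jb_weak_sqrtvfi}, which additionally supplies the quantitative bound $G(\Vfi') \le C(G(\vfi'))$ linking the characteristics of $\Vfi$ to those of $\vfi$.

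Feeding $\T := \V$ and $\vfi \rightsquigarrow \Vfi$ into Proposition \ref{prop:jb_structure:mon_potT} then yields, for all $P, Q \in Sym^{d \times d}$,
\begin{align*}
(\V(P) - \V(Q)) : (P - Q) &\ge c\bigl(G(\Vfi')\bigr)\, \Vfi''(|P| + |Q|)\, |P - Q|^2, \\
|\V(P) - \V(Q)| &\le C\bigl(G(\Vfi')\bigr)\, \Vfi''(|P| + |Q|)\, |P - Q|.
\end{align*}
Finally, invoking $G(\Vfi') \le C(G(\vfi'))$ from Lemma \ref{lem:jb_weak_sqrtvfi} together with the monotone dependence of the constants in Proposition \ref{prop:jb_structure:mon_potT} on the good-derivative characteristic, I would replace $c(G(\Vfi'))$ and $C(G(\Vfi'))$ by expressions depending only on $G(\vfi')$, which is the asserted form of the inequalities.

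There is essentially no obstacle in this argument; the only point deserving a remark is that Proposition \ref{prop:jb_structure:mon_potT} was stated for an unspecified potential tensor $\T$ precisely so that it could be reapplied in this way, and that all constants there depend monotonically on the underlying $\Delta_2$/good-$\vfi'$ characteristics, so routing everything through $G(\vfi')$ is legitimate. (Should a self-contained computation be preferred instead, one would differentiate $\Vfi(|Q|)$ directly, use $\Vfi'(t) = \sqrt{t\vfi'(t)}$ and $\Vfi''(t) \sim \sqrt{\vfi''(t)}$ from Lemma \ref{lem:jb_weak_sqrtvfi}, and repeat the monotonicity-and-growth estimate from the proof of Proposition \ref{prop:jb_structure:mon_potT}; but the direct invocation above is cleaner.)
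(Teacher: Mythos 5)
Your proposal is correct and coincides with the paper's own argument: the paper likewise verifies via Lemma \ref{lem:jb_weak_sqrtvfi} that $\Vfi$ is an $\eN$-function with the good $\Vfi'$ property and $G(\Vfi')\le C(G(\vfi'))$, and then applies Proposition \ref{prop:jb_structure:mon_potT} with $\T:=\V$. Nothing further is needed.
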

\begin{proof}
Lemma \ref{lem:jb_weak_sqrtvfi} implies that for an $\eN$-function $\vfi$ that has the good $\vfi' $ property, the potential $\Vfi$ for $\V$ is also an $\eN$-function enjoying the good $\Vfi' $ property, and that $G (\Vfi') \le C(G (\vfi'))$. Therefore we can use Proposition \ref{prop:jb_structure:mon_potT} with $\T := \V$.
\end{proof}

\subsection{Expressing flexibly the monotonicity. Shifted $\eN$-functions}
The results of this subsection gives a few ways of encoding the monotonicity conditions for $\A$ and show connection between $\A$ and $\V$. To state it, we first present the following ingenious concept of a shifted $\eN$-function. It was supposedly stated in an explicit form first by Diening and Ettwein in \cite{DieEtt08}.
\begin{mydef}\label{def:jb_weak:shifted}
For an $\eN$-function $\vfi$ and $a \ge 0$ the 'shifted $\eN$-function' is given as
\begin{equation}
\vfi_a (t) := \int_0^t \vfi' (a+s ) \frac{s}{a+s} ds
\end{equation}
\end{mydef}
The following lemma gathers important features of
shifted $\eN$-functions.
\begin{lem}\label{lem:app:shifted}
The shifted $\eN$-function $\vfi_a$ is indeed an $\eN$-function. It inherits from $\vfi$  the $\Delta_2 $ condition uniformly with respect to its shift, i.e. for a numerical constant C holds
\begin{equation*}
\Delta_2 ( \{ \vfi_a | a \ge 0 \} ) \le C \Delta_2 (  \vfi  )
\end{equation*}
The same holds for the good $\vfi'$ property with 
\begin{equation*}
\sup_{a \in \er_+} G  ( \vfi'_a  ) \le C G  ( \vfi' )
\end{equation*}
Moreover, if $\vfi$ has the good $\vfi'$ property, then $\vfi''_a (t) \sim \vfi'' (a+t)$ and uniformly in $S, \, T \in Sym^{d \times d}$ one has for $|S| + |T| > 0$ 
\begin{equation}\label{eq:app:shifted:eq}
\begin{aligned}
\vfi'' (|S| + |T|)  |S-T| & \sim   \vfi'_{|S|} (|S - T|) \\
\vfi'' (|S| + |T|) |S-T|^2 & \sim   \vfi_{|S|} (|S - T|) 
\end{aligned}
\end{equation}
and constants in $\sim$ depend only on $G (  \vfi'  )$. One can also change shift as follows
\begin{equation}\label{eq:app:shifted:change}
 \vfi'_{|S|} (|S - T|) \le C (G(\vfi')) \left( \vfi'_{|C|} (|S - C|) + \vfi'_{|C|} (|T - C|) \right)
\end{equation}
where  $C$ is chosen arbitrarily from  $ Sym^{d \times d}$. Moreover, for $\lambda \in [0,1]$ it holds
\begin{equation}\label{eq:app:shifted:little2}
 \vfi_{|S|} (\lambda |S|) \le \lambda^2 C (G(\vfi')) \vfi(|S|)
\end{equation}
\end{lem}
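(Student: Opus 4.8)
The plan is to carry everything out by explicit manipulation of the formula $\vfi_a'(t)=\vfi'(a+t)\,\tfrac{t}{a+t}$, in the spirit of Diening \& Ettwein \cite{DieEtt08}. That $\vfi_a$ is an $\eN$-function is immediate: $\vfi_a'$ is a product of the two nonnegative, non-decreasing, right-continuous functions $t\mapsto\vfi'(a+t)$ and $t\mapsto\tfrac{t}{a+t}=1-\tfrac{a}{a+t}$, it vanishes at $0$, is positive for $t>0$, and tends to $+\infty$ because $\vfi'(a+t)\to+\infty$ while $\tfrac{t}{a+t}\to 1$. For the uniform $\Delta_2$-inheritance one argues directly: $\Delta_2(\vfi)$ forces $\vfi'(2t)\le C(\Delta_2(\vfi))\,\vfi'(t)$ (from $2t\,\vfi'(2t)\le\int_{2t}^{4t}\vfi'\le\vfi(4t)\le\Delta_2(\vfi)^2\vfi(t)\le\Delta_2(\vfi)^2\,t\,\vfi'(t)$), and then substituting $s=2u$ in $\vfi_a(2t)=\int_0^{2t}\vfi'(a+s)\tfrac{s}{a+s}\,ds$ and using $a+2u\le 2(a+u)$ together with $\tfrac{u}{a+2u}\le\tfrac{u}{a+u}$ gives $\vfi_a(2t)\le C(\Delta_2(\vfi))\,\vfi_a(t)$.

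Assuming the good $\vfi'$ property, $\vfi$ is $C^2$ on $(0,\infty)$, hence so is $\vfi_a$, and
\[
\vfi_a''(t)=\vfi''(a+t)\frac{t}{a+t}+\vfi'(a+t)\frac{a}{(a+t)^2}\sim\vfi''(a+t)\Bigl(\frac{t}{a+t}+\frac{a}{a+t}\Bigr)=\vfi''(a+t),
\]
where the $\sim$ (with constants controlled by $G(\vfi')$) uses $\vfi'(a+t)\sim(a+t)\vfi''(a+t)$; this is the claimed $\vfi_a''(t)\sim\vfi''(a+t)$. Combined with $\vfi_a'(t)=\vfi'(a+t)\tfrac{t}{a+t}\sim(a+t)\vfi''(a+t)\tfrac{t}{a+t}=t\,\vfi''(a+t)\sim t\,\vfi_a''(t)$ it gives the good $\vfi_a'$ property with $G(\vfi_a')\le C\,G(\vfi')$ uniformly in $a$, and then Proposition~\ref{prop:jb_structure:'giveD} applied to $\vfi_a$ re-yields the uniform $\Delta_2$ in the present setting. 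As a by-product I would record that, since $\vfi''(r)\sim\vfi'(r)/r$ with $\vfi'$ non-decreasing and satisfying $\Delta_2$ (Proposition~\ref{prop:jb_structure:'giveD}), the quantity $\vfi''$ is stable under bounded rescalings, $\vfi''(cr)\sim_c\vfi''(r)$ for fixed $c>0$, and $r\mapsto r\,\vfi''(r)\sim\vfi'(r)$ is almost non-decreasing; these are used below.

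For \eqref{eq:app:shifted:eq}, the triangle inequality gives $|S|+|S-T|\sim|S|+|T|$, so $\vfi''(|S|+|S-T|)\sim\vfi''(|S|+|T|)$ by rescaling stability, and hence
\[
\vfi_{|S|}'(|S-T|)=\vfi'\bigl(|S|+|S-T|\bigr)\frac{|S-T|}{|S|+|S-T|}\sim\vfi''\bigl(|S|+|S-T|\bigr)\,|S-T|\sim\vfi''\bigl(|S|+|T|\bigr)\,|S-T|,
\]
using $\vfi'(r)\sim r\,\vfi''(r)$; this is the first line, and the second follows by multiplying by $|S-T|$ and using $t\,\vfi_{|S|}'(t)\sim\vfi_{|S|}(t)$. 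For \eqref{eq:app:shifted:little2} with $S\neq0$ I would apply \eqref{eq:app:shifted:eq} to the pair $S$, $(1+\lambda)S$ (so $|S-(1+\lambda)S|=\lambda|S|$ and $|S|+|(1+\lambda)S|=(2+\lambda)|S|\sim|S|$), getting $\vfi_{|S|}(\lambda|S|)\sim\vfi''\bigl((2+\lambda)|S|\bigr)\,\lambda^2|S|^2\sim\lambda^2\,\vfi''(|S|)\,|S|^2\sim\lambda^2\,\vfi(|S|)$, since $\vfi(t)\sim t\,\vfi'(t)\sim t^2\,\vfi''(t)$; the case $S=0$ is trivial.

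Finally, for the change-of-shift inequality \eqref{eq:app:shifted:change} --- the step I expect to require the most care --- I would apply the first line of \eqref{eq:app:shifted:eq} to both sides, reducing it to
\[
\vfi''\bigl(|S|+|T|\bigr)\,|S-T|\ \lesssim\ \vfi''\bigl(|C|+|S|\bigr)\,|S-C|+\vfi''\bigl(|C|+|T|\bigr)\,|T-C|,
\]
then estimate $|S-T|\le|S-C|+|T-C|$ and split into the cases $|S-C|\ge|T-C|$ and $|S-C|<|T-C|$, which are interchanged by swapping $S$ and $T$ on the right-hand side. In each case I would bound the argument $|S|+|T|$ on the left in terms of the matching argument on the right via the triangle inequality, and absorb the resulting mismatch using the rescaling stability of $\vfi''$, the $\Delta_2$ of $\vfi'$, and the near-monotonicity of $r\mapsto r\,\vfi''(r)$; the analogous case analysis is carried out in \cite{DieEtt08}. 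Beyond this bookkeeping, the only real point is to keep every implied constant dependent on $G(\vfi')$ alone; everything else is routine manipulation of the defining formulas.
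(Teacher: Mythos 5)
Your proposal is correct in substance and takes a genuinely more self-contained route than the paper. The paper's own proof is essentially a citation: the $\eN$-function fact and uniform $\Delta_2$-inheritance are referred to Lemma~23 of \cite{DieEtt08}, and \eqref{eq:app:shifted:eq}, \eqref{eq:app:shifted:change}, \eqref{eq:app:shifted:little2} are referred to Lemmas~24, 29 and 30 there; the only item the paper computes is the good-$\vfi'$ identity $\vfi_a''(t)\sim\vfi'(a+t)/(a+t)$, which you compute in exactly the same way. Beyond that, you supply arguments the paper does not: the direct verification that $\vfi_a'$ is an admissible derivative; the explicit $\Delta_2$-inheritance via the substitution $s=2u$ together with $\vfi'(2t)\lesssim\vfi'(t)$ extracted from $\Delta_2(\vfi)$; the derivation of \eqref{eq:app:shifted:eq} from $|S|+|S-T|\sim|S|+|T|$ and the $\Delta_2$-based rescaling stability $\vfi''(cr)\sim_c\vfi''(r)$; and the neat trick of obtaining \eqref{eq:app:shifted:little2} by applying \eqref{eq:app:shifted:eq} to the auxiliary pair $(S,(1+\lambda)S)$ (which sidesteps Diening--Ettwein's separate proof of their Lemma~30, and produces uniform constants because $2+\lambda\in[2,3]$). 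These are all correct as written. For \eqref{eq:app:shifted:change} you, like the paper, ultimately lean on the case analysis in \cite{DieEtt08}; a small caution is that the naive version of your split by whether $|S-C|\ge|T-C|$ does not close if one only uses $|S-T|\le 2\max(|S-C|,|T-C|)$ and then compares $\vfi''$ values, since $\vfi''$ need not be monotone --- as you yourself flag, one really does need the almost-monotonicity of $r\mapsto r\,\vfi''(r)\sim\vfi'(r)$ rather than the rescaling stability alone, so it is worth stating which tool closes which subcase if you flesh this out. Net, your version is more instructive than the paper's but not shorter; each buys what one would expect, the paper brevity and your version self-containment.
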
\begin{proof}
The fact that $\vfi_a$ is an $\eN$-function and the uniform estimate for $\Delta_2$ follow from the proof of Lemma 23, \cite{DieEtt08}. To show its good behavior with respect to the good $\vfi'$ property take $t>0$ and compute
\[
\vfi''_a (t) = \vfi'' (a+t ) \frac{t}{a+t} +  \vfi' (a+t ) \frac{a}{(a+t)^2} \sim  \vfi' (a+t ) \frac{t}{(a+t)^2} +  \vfi' (a+t ) \frac{a}{(a+t)^2}  = \vfi' (a+t ) \frac{1}{a+t}
\]
Multiplying the above equivalence with $t$ and using Definition \ref{def:jb_weak:shifted} we obtain good $\vfi'$ property for $\vfi_a$ with $G  ( \vfi'_a  ) \le C G  ( \vfi' )$ (in fact, with  $G  ( \vfi'_a  ) = (1 \vee G  ( \vfi' )) =  G  ( \vfi' )$, but we keep $C$, as it anyway appears in the uniform estimate for $\Delta_2$). The formulas \eqref{eq:app:shifted:eq}, \eqref{eq:app:shifted:change}, \eqref{eq:app:shifted:little2}  are given by the proofs of Lemma 24 of \cite{DieEtt08}, Lemma 29 and Lemma 30 therein, respectively.
\end{proof}
Thanks to the above result we have
\begin{cor}\label{cor:jb_weak:goodphi_shifted}
Lemma \ref{lem:jb_weak:goodphi} holds  for  $\{ \vfi_a | a \ge 0 \}$ with the constants majorized $a$-uniformly  by $ C ( G (\vfi')) $.
\end{cor}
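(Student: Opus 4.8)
The statement will follow by applying Lemma~\ref{lem:jb_weak:goodphi} to the family
\[
\Psi := \{ \vfi_a \mid a \ge 0 \} \cup \{ (\vfi_a)^* \mid a \ge 0 \},
\]
so the whole task reduces to verifying that $\Psi$ satisfies a \emph{common} $\Delta_2$ condition whose optimal constant is bounded by $C(G(\vfi'))$, uniformly in the shift $a$.

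First I would recall, from Lemma~\ref{lem:app:shifted}, that for an $\eN$-function $\vfi$ with the good $\vfi'$ property each shifted $\vfi_a$ is again an $\eN$-function which inherits the good $\vfi'$ property with $\sup_{a \ge 0} G(\vfi'_a) \le C\, G(\vfi')$. Next, applying Proposition~\ref{prop:jb_structure:'giveD} to each $\vfi_a$ separately (each being an $\eN$-function with the good $\vfi'$ property), I obtain that both $\vfi_a$ and its conjugate $(\vfi_a)^*$ satisfy $\Delta_2$, with
\[
\Delta_2(\vfi_a) \le C\big(G(\vfi'_a)\big), \qquad \Delta_2\big((\vfi_a)^*\big) \le C\big(G((\vfi_a^*)')\big) \le C\big(G(\vfi'_a)\big).
\]
Combining these bounds with $\sup_{a} G(\vfi'_a) \le C\, G(\vfi')$ and the monotone dependence of the estimating constant on its argument yields $\Delta_2(\Psi) \le C(G(\vfi'))$. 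Finally, Lemma~\ref{lem:jb_weak:goodphi} applies to $\Psi$, and every constant it produces --- in $t\varphi'(t) \sim \varphi(t)$, in $\varphi^*(\varphi'(t)) \sim \varphi(t)$, and in Young's inequality $ab \le \delta \vfi(a) + C(\delta)\vfi^*(b)$ --- depends only on $\Delta_2(\Psi)$; hence all of them are majorized by $C(G(\vfi'))$, uniformly in $a$, which is exactly the assertion of the Corollary.

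I do not expect any genuine obstacle here: the proof is a direct chaining of Lemma~\ref{lem:app:shifted} and Proposition~\ref{prop:jb_structure:'giveD}. The only point requiring a little care is that the hypothesis of Lemma~\ref{lem:jb_weak:goodphi} demands a common $\Delta_2$ bound for the \emph{union} of the shifted functions and their conjugates, since $\Delta_2(\vfi_a)$ alone would not imply $\Delta_2((\vfi_a)^*)$; this is precisely the role of Proposition~\ref{prop:jb_structure:'giveD}, which upgrades the good $\vfi'$ property of $\vfi_a$ into a two-sided $\Delta_2$ control, and it is there that the $a$-uniformity of $G(\vfi'_a)$ from Lemma~\ref{lem:app:shifted} gets transferred into $a$-uniformity of all the constants.
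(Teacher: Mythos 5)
Your proposal is correct and follows essentially the same route the paper intends: the paper leaves the corollary without an explicit proof, deriving it from Lemma \ref{lem:app:shifted} (uniform good $\vfi'_a$ property of the shifts) combined with Proposition \ref{prop:jb_structure:'giveD} and the remark following it, which is exactly your chain of reasoning. Your explicit observation that the two-sided $\Delta_2$ control for $\vfi_a$ \emph{and} $(\vfi_a)^*$ is the point needing care is precisely the content of that remark.
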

We will need also 
\begin{prop}\label{prop:jb_weak:shiftedO}
For an $\eN$-function $\vfi$ with the good $\vfi'$ property and $\vfi''$ almost increasing, we have the implication \[a \le b \implies \vfi_a (t) \le C (G (\vfi'))  \vfi_b (t)\]
\end{prop}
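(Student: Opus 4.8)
The plan is to reduce the asserted inequality between $\vfi_a$ and $\vfi_b$ to a \emph{pointwise} comparison of their derivatives and then integrate. By Definition~\ref{def:jb_weak:shifted},
\[
\vfi_a(t)=\int_0^t \vfi'(a+s)\,\frac{s}{a+s}\,ds,\qquad \vfi_b(t)=\int_0^t \vfi'(b+s)\,\frac{s}{b+s}\,ds,
\]
so it suffices to show, for every $s\ge 0$, that $\vfi'(a+s)\,\frac{s}{a+s}\le C(G(\vfi'))\,\vfi'(b+s)\,\frac{s}{b+s}$. The case $s=0$ is trivial (both sides vanish), so from now on I take $s>0$.

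The key point is that the factor $s/(a+s)$ is \emph{decreasing} in the shift, so monotonicity of $\vfi'$ alone does not suffice; one must first trade the $\vfi'$-factor for a $\vfi''$-factor by means of the good $\vfi'$ property. Concretely, $\vfi'(a+s)\sim (a+s)\vfi''(a+s)$ and $\vfi'(b+s)\sim (b+s)\vfi''(b+s)$ with constants controlled by $G(\vfi')$ (Definition~\ref{def:jb_weak:goodphi'}), whence $\vfi'(a+s)\frac{s}{a+s}\sim s\,\vfi''(a+s)$ and $\vfi'(b+s)\frac{s}{b+s}\sim s\,\vfi''(b+s)$, again up to constants depending only on $G(\vfi')$. (Equivalently one could invoke $\vfi_a''(s)\sim\vfi''(a+s)$ from Lemma~\ref{lem:app:shifted} together with the identity $\vfi_a(t)=\int_0^t(t-s)\vfi_a''(s)\,ds$, which holds since $\vfi_a$ is an $\eN$-function of class $C^2$ on $(0,\infty)$ with $\vfi_a(0)=\vfi_a'(0)=0$.) Then I would use the extra hypothesis: since $a\le b$ we have $a+s\le b+s$, and almost-increasingness of $\vfi''$ gives $\vfi''(a+s)\le C_{\mathrm{ai}}\,\vfi''(b+s)$, where $C_{\mathrm{ai}}$ is the almost-increasingness constant of $\vfi''$. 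Chaining the relations,
\[
\vfi'(a+s)\,\frac{s}{a+s}\ \le\ C(G(\vfi'))\,s\,\vfi''(a+s)\ \le\ C(G(\vfi'))\,s\,\vfi''(b+s)\ \le\ C(G(\vfi'))\,\vfi'(b+s)\,\frac{s}{b+s},
\]
the constant absorbing $C_{\mathrm{ai}}$ and the relevant powers of $G(\vfi')$. Integrating this pointwise bound over $s\in(0,t)$ and recalling the integral formulas above yields $\vfi_a(t)\le C(G(\vfi'))\,\vfi_b(t)$.

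I do not anticipate a genuine obstacle; the only delicate point is exactly the one highlighted — that $s/(a+s)$ moves in the ``wrong'' direction, forcing one to pass through the good $\vfi'$ property so that the comparison becomes a statement about monotonicity of $\vfi''$, which is precisely the almost-increasingness hypothesis. One should also note that all equivalence constants arising from the good $\vfi'$ property are shift-uniform (Lemma~\ref{lem:app:shifted}), so nothing degenerates as $a\to 0$ or as the shift grows.
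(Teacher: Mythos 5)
Your proof is correct and follows essentially the same route as the paper: write out the integrands of $\vfi_a$ and $\vfi_b$, use the good $\vfi'$ property in both directions to pass between $\vfi'(\cdot+s)/(\cdot+s)$ and $\vfi''(\cdot+s)$, apply almost-increasingness of $\vfi''$ in the middle, and integrate. Your remark pinpointing why $s/(a+s)$ forces the detour through $\vfi''$ is exactly the point the paper's one-line chain is silently handling.
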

\begin{proof}
The Definition \ref{def:jb_weak:shifted} of a shifted $\eN$-function, the good $\vfi'$ property and the fact that $\vfi''$ is almost increasing give for  $ a \le b$
  \begin{multline*}
\vfi_a (t) = \int_0^t \frac{ \vfi' (a+s )}{a+s} s ds \le C (G (\vfi')) \int_0^t \vfi'' (a+s ) s ds \le \\
C (G (\vfi')) \int_0^t \vfi'' (b+s ) s ds  \le C (G (\vfi')) \int_0^t \frac{ \vfi' (b+s )}{b+s} s ds = \vfi_b (t)
  \end{multline*}

\end{proof}

The following results show how one can interchangeably use $\A, \V $ and $\vfi_a $. 
\begin{prop}[Equivalent notions of monotonicity] \label{lem:jb_weak:mon_equiv}
Assume that an $\eN$-function $\vfi$ enjoys the good $\vfi'$ property. Then for the tensor $\V$ given by the Definition \ref{def:jb_weak:V} holds for any $P, Q
 \in Sym^{d \times d}$ \begin{equation}\label{eq:lem:jb_weak:mon_equiv1}
 \vfi'' (|P| + |Q|) |P-Q|^2   \sim  \vfi_{|P|} ( |P-Q|)  \sim | \V(P) - \V(Q) |^2
\end{equation}
Moreover, if the tensor $\A$ satisfies Assumption \ref{ass:jb_weak:growth_gen} with our  $\vfi$,  the above expressions  are comparable to the monotonicity formula, i.e.
\begin{equation}\label{eq:lem:jb_weak:mon_equiv2}
(\A(P) - \A(Q) ) \!:\! (P- Q)  \sim  \vfi'' (|P| + |Q|) |P-Q|^2   \sim  \vfi_{|P|} ( |P-Q|)  \sim | \V(P) - \V(Q) |^2
\end{equation}
for any $P, Q
 \in \er^{d \times d}$. All constants depend only on $ G (\vfi')$.
\end{prop}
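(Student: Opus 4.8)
The plan is to chain three elementary equivalences, each handed to us by a result already in the Appendix, and then to append the $\A$-comparison at the end.

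First I would dispose of the degenerate case $P=Q$ (both sides vanish) and assume $|P|+|Q|>0$. The equivalence $\vfi''(|P|+|Q|)|P-Q|^2 \sim \vfi_{|P|}(|P-Q|)$ is then exactly the second line of \eqref{eq:app:shifted:eq} in Lemma \ref{lem:app:shifted}, read with $S:=P$, $T:=Q$; the constants there depend only on $G(\vfi')$.

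The central step is $\vfi_{|P|}(|P-Q|) \sim |\V(P)-\V(Q)|^2$. Here the key observation is that $\V=\partial_Q\Vfi(|Q|)$ is itself a potential-type tensor in the sense of Assumption \ref{ass:jb_weak:growth_pot}, with potential the ``square root'' $\eN$-function $\Vfi$, which by Lemma \ref{lem:jb_weak_sqrtvfi} again enjoys the good $\vfi'$ property (with $G(\Vfi')\le C(G(\vfi'))$) and satisfies $\Vfi''(t)\sim\sqrt{\vfi''(t)}$. Hence Corollary \ref{prop:jb_structure:mon_potV} (equivalently Proposition \ref{prop:jb_structure:mon_potT} with $\T:=\V$) applies. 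Squaring its growth bound $|\V(P)-\V(Q)|\le C(G(\vfi'))\Vfi''(|P|+|Q|)|P-Q|$ and using $\Vfi''(t)^2\sim\vfi''(t)$ gives the upper estimate $|\V(P)-\V(Q)|^2 \lesssim \vfi''(|P|+|Q|)|P-Q|^2$. For the matching lower estimate I would combine the monotonicity bound $\Vfi''(|P|+|Q|)|P-Q|^2 \lesssim (\V(P)-\V(Q)):(P-Q)$ with Cauchy--Schwarz, $(\V(P)-\V(Q)):(P-Q)\le|\V(P)-\V(Q)|\,|P-Q|$, then divide by $|P-Q|$ and square; together with the first step this yields the full chain, with all constants controlled by $G(\vfi')$.

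Finally, if $\A$ obeys Assumption \ref{ass:jb_weak:growth_gen}, its first line gives $(\A(P)-\A(Q)):(P-Q)\ge c\,\vfi''(|P|+|Q|)|P-Q|^2$, and its second line together with Cauchy--Schwarz gives $(\A(P)-\A(Q)):(P-Q)\le|\A(P)-\A(Q)|\,|P-Q|\le C\,\vfi''(|P|+|Q|)|P-Q|^2$; concatenating this with the previous equivalences finishes the proof. I do not anticipate a genuine obstacle: the only points needing a little care are recognising $\V$ as a potential tensor for $\Vfi$ so that Corollary \ref{prop:jb_structure:mon_potV} is available, and checking that each constant introduced depends only on $G(\vfi')$ (and not on $\Delta_2(\vfi^*)$ or similar), which is guaranteed by Proposition \ref{prop:jb_structure:'giveD} together with the uniform bounds in Lemmas \ref{lem:app:shifted} and \ref{lem:jb_weak_sqrtvfi}.
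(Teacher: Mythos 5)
Your proof is correct and follows essentially the same route as the paper's (terse) argument: left $\sim$ from Lemma~\ref{lem:app:shifted}, right $\sim$ from Corollary~\ref{prop:jb_structure:mon_potV} combined with $\Vfi''\sim\sqrt{\vfi''}$ of Lemma~\ref{lem:jb_weak_sqrtvfi}, and the $\A$-comparison directly from Assumption~\ref{ass:jb_weak:growth_gen}. You have merely fleshed out the squaring and Cauchy--Schwarz steps that the paper leaves implicit, and your tracking of the $G(\vfi')$-dependence of the constants is exactly right.
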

\begin{proof}
The left $\sim$ of \eqref{eq:lem:jb_weak:mon_equiv1} comes from \eqref{eq:app:shifted:eq} of Lemma \ref{lem:app:shifted}. The right $\sim$ of \eqref{eq:lem:jb_weak:mon_equiv1} follows from Corollary \ref{prop:jb_structure:mon_potV} and formula $\Vfi'' (t) \sim  \sqrt{ \vfi'' (t)} $ of Lemma \ref{lem:jb_weak_sqrtvfi}. Now \eqref{eq:lem:jb_weak:mon_equiv2} follows from Assumption \ref{ass:jb_weak:growth_gen}.
\end{proof}
From the Proposition \ref{lem:jb_weak:mon_equiv} we see that for $P : \er \to  Sym^{d \times d}$ and $\V \circ P : \er \to  Sym^{d \times d}$  differentiable at $s$ we have
\begin{equation}\label{eq:jb_structure:diffa15}
 \vfi'' (|P| ) |\partial_s P|^2  \sim  \left|\partial_s\! \left(\V (P)\right) \right|^2
\end{equation}

This gives for  $\A \circ P : \er \to  Sym^{d \times d}$  differentiable at $s$
\begin{equation}\label{eq:jb_structure:diffa2}
| \partial_s (\A(P)) | \le  C (G (\vfi'))  \vfi'' (|P|) |\partial_s P| \le  C (G (\vfi'))  \left( |\partial_s \V (P)|^2 + \vfi'' (|P|) \right) 
\end{equation}
where we use in the first estimate inequality \eqref{eq:jb_structure:diffa1}.
This in turn gives
\begin{equation}\label{eq:jb_structure:diffa2'}
| \partial_s (\A(P)) | |\partial_s P| \le C \vfi'' (|P|) |\partial_s P|^2  \le   C (G (\vfi'))  \left|\partial_s \left(\V (P)\right) \right|^2
\end{equation}

Moreover, for an $\eN$-function $\vfi$ as in Definition \ref{def:jb_weak:goodphi'}, we see from Proposition \ref{lem:jb_weak:mon_equiv} that 
\begin{equation}\label{eq:jb_structure:diffa3}
 |\partial_s P|^2 \le \frac{C (G (\vfi'))}{\vfi'' (0)}  | \partial_s (\V(P)) |^2
\end{equation}
because, by definition, $\vfi$ has second derivatives almost increasing and $\vfi'' (0) > 0$.

\subsection{Korn's inequality} \label{subs:Korn}
\begin{lem}\label{lem:korn}
Let $f \in W^{1,\vfi}_x (\O)$, where $\vfi$ has good $\vfi'$ property. Then there exists $C (\Delta_2 (\vfi, \vfi^*))$ such that
\begin{equation}\label{eq:korn}
\int_{B_r} \vfi (|\nabla u|) \le C (G (\vfi')) \int_{B_r} \vfi (|\D u|) +  \vfi \left( \frac{|u - (u)|}{r} \right)
\end{equation}
\end{lem}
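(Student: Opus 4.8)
The plan is to reduce \eqref{eq:korn}, by rescaling, to the unit ball, there to split off the infinitesimal–rigid–motion part of $u$, so that the essential content is the \emph{homogeneous} Orlicz--Korn inequality $\int_{B_1}\vfi(|\nabla w|)\le C\int_{B_1}\vfi(|\D w|)$ for $w$ orthogonal to rigid motions, and to obtain that by interpolating the classical $L^p$–Korn estimate into the Orlicz scale. First I would rescale: putting $v(x):=r^{-1}u(rx)$ one checks $\nabla v(x)=\nabla u(rx)$, $\D v(x)=\D u(rx)$ and $v-(v)_{B_1}=r^{-1}\big(u(r\cdot)-(u)_{B_r}\big)$, so each of the three integrals in \eqref{eq:korn} is $r^{-d}$ times the corresponding integral over $B_r$, and it suffices to treat $r=1$. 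For $r=1$, let $\pi u$ be the $L^2(B_1)$–orthogonal projection of $u$ onto the finite–dimensional space $\mathcal{RM}$ of rigid motions (affine maps with skew gradient) and set $w:=u-\pi u$; since $\pi u$ is smooth on $\overline{B_1}$ we have $w\in W^{1,\vfi}_x(B_1)$, $\D w=\D u$, and $w\perp_{L^2}\mathcal{RM}$.

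The core step is the estimate $\int_{B_1}\vfi(|\nabla w|)\le C(G(\vfi'))\int_{B_1}\vfi(|\D w|)$. I would start from the classical Korn inequality $\|\nabla w\|_{L^p(B_1)}\le C_p\|\D w\|_{L^p(B_1)}$ for $1<p<\infty$ and $w\perp_{L^2}\mathcal{RM}$, which is realised through bounded operators of Calder\'on--Zygmund (and, for the passage from a ball, Bogovskii/chain) type, hence is a bounded linear map on every $L^p(B_1)$, $1<p<\infty$. By Proposition~\ref{prop:jb_structure:'giveD} both $\vfi$ and $\vfi^*$ satisfy $\Delta_2$, equivalently (Proposition~\ref{prop:jb_structure:polyGrowth}) $L^\vfi(B_1)$ has Boyd indices $1<q_1\le q_2<\infty$ governed by $G(\vfi')$; thus $L^\vfi(B_1)$ is an interpolation space of the couple $\big(L^{p_0}(B_1),L^{p_1}(B_1)\big)$ for $p_0<q_1\le q_2<p_1$, and the Korn operator is bounded on $L^\vfi(B_1)$ with norm $\le C(G(\vfi'))$. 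Passing from this Luxemburg bound to the modular one is routine: normalising by $\lambda:=\int_{B_1}\vfi(|\D w|)$, the Boyd estimates give $\|\D w\|_\vfi\le C\lambda^{1/q_i}$, hence $\|\nabla w\|_\vfi\le C(G(\vfi'))\lambda^{1/q_i}$, and one more application of Proposition~\ref{prop:jb_structure:polyGrowth} turns this back into $\int_{B_1}\vfi(|\nabla w|)\le C(G(\vfi'))\lambda$. The identical mechanism (the Poincar\'e operator, again of Bogovskii/Riesz–potential type, interpolated into $L^\vfi$) yields the Orlicz--Poincar\'e estimate $\int_{B_1}\vfi(|w-(w)_{B_1}|)\le C(G(\vfi'))\int_{B_1}\vfi(|\nabla w|)$.

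To finish, I would control the rigid part. By convexity and $\Delta_2$, $\int_{B_1}\vfi(|\nabla u|)\le C(G(\vfi'))\big(\int_{B_1}\vfi(|\nabla w|)+\int_{B_1}\vfi(|\nabla(\pi u)|)\big)$. As $\nabla(\pi u)$ is a constant matrix, $\int_{B_1}\vfi(|\nabla(\pi u)|)=|B_1|\,\vfi(|\nabla(\pi u)|)$; on the finite–dimensional space $\mathcal{RM}$ there is a purely dimensional $\kappa_d$ with $|\nabla\rho|\le\kappa_d\,\dashint_{B_1}|\rho-(\rho)_{B_1}|$ for all $\rho\in\mathcal{RM}$, so combining this with Proposition~\ref{prop:jb_structure:polyGrowth} (to absorb $\kappa_d$) and Jensen's inequality gives $\int_{B_1}\vfi(|\nabla(\pi u)|)\le C(G(\vfi'),d)\int_{B_1}\vfi(|\pi u-(\pi u)_{B_1}|)$. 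Since on $B_1$ one has $\pi u-(\pi u)_{B_1}=(u-(u)_{B_1})-(w-(w)_{B_1})$, convexity, $\Delta_2$, the Orlicz--Poincar\'e estimate, the core step and $\D w=\D u$ give $\int_{B_1}\vfi(|\pi u-(\pi u)_{B_1}|)\le C(G(\vfi'))\big(\int_{B_1}\vfi(|u-(u)_{B_1}|)+\int_{B_1}\vfi(|\D u|)\big)$. Assembling these displays with the core step and undoing the scaling produces \eqref{eq:korn} (with $\int_{B_r}\vfi\big(|u-(u)_{B_r}|/r\big)$ on the right).

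The one genuinely nontrivial ingredient is the homogeneous Korn inequality on the ball in the Orlicz scale with constant depending only on $G(\vfi')$: on $\er^d$ it is immediate from the Calder\'on--Zygmund representation of $\nabla w$ through $\D w$, while the passage to a bounded domain rests on the classical but technical John–domain/Bogovskii decomposition; once the underlying operators are identified, their $L^\vfi$–boundedness is a soft consequence of $\vfi,\vfi^*\in\Delta_2$ via interpolation, which is precisely what renders the $G(\vfi')$–dependence explicit.
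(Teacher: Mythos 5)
Your proposal is correct in outline but takes a genuinely different route. The paper splits $\nabla u$ into its oscillation $\nabla u-(\nabla u)$ and its ball--average $(\nabla u)$; the oscillation is handled by citing the Orlicz Korn inequality for oscillations (Theorem~6.13 of \cite{DieRuzSch10}), while the real work is a hands--on estimate of $\vfi(|(\nabla u)|)$: one introduces $h(t):=\vfi(t^{1/q_0})$, which the good $\vfi'$ property makes convex for $q_0$ close to $1$, and then a Jensen argument for $h$ together with the classical $L^{q_0}$--Korn inequality turns the average of $|\nabla u|$ into the desired modular bound. The paper does this precisely to avoid extrapolation, which it singles out as a technique it does not want to use. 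Your route instead decomposes $u=w+\pi u$ with $w\perp_{L^2}\mathcal{RM}$ and obtains the Orlicz boundedness of Korn-- and Poincar\'e--type operators from their $L^p$ boundedness by Boyd interpolation. That works, but it is a close cousin of the extrapolation the authors deliberately sidestep: it imports the Bogovskii/negative--norm machinery needed to realize Korn as an honest bounded linear operator, plus Boyd's interpolation theorem, whereas the paper's proof is self--contained once $L^{q_0}$--Korn is granted. Neither is strictly stronger; yours is softer and more modular, theirs is more elementary and keeps the dependence on $G(\vfi')$ transparent.

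Two steps in your writeup need repair. First, the passage from $\|\nabla w\|_\vfi\le C\|\D w\|_\vfi$ to $\int\vfi(|\nabla w|)\le C\int\vfi(|\D w|)$ is not correctly accomplished by normalising the modular to $\lambda$ and writing $\|\D w\|_\vfi\le C\lambda^{1/q_i}$: when you convert the norm bound for $\nabla w$ back to a modular, the exponent from Proposition~\ref{prop:jb_structure:polyGrowth} that you are forced to use generally differs from the one you used going in, and you land on $\lambda^{q_2/q_1}$ rather than $\lambda$. The clean fix is to renormalise the $\eN$--function itself: set $\psi:=\vfi/\lambda$, note $G(\psi')=G(\vfi')$, observe $\int\psi(|\D w|)=1$ so $\|\D w\|_{\psi}\le1$, apply the operator bound in $L^\psi$, and unravel via Proposition~\ref{prop:jb_structure:polyGrowth}. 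Second, ``the Korn operator'' is not literally a bounded map $\D w\mapsto\nabla w$ on $L^p$ of matrix fields, since $\D w$ alone does not determine $\nabla w$; to invoke interpolation you need an actual operator, which requires the Ne\v{c}as negative--norm estimate or the Bogovskii construction, and this should be spelled out before the Boyd step is applied.
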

For Orlicz and weighted $L^p$ spaces, the standard source of Korn's inequality is paper by Diening, R\r u\v zi\v cka \& Schumacher \cite{DieRuzSch10}. It contains the needed by us inequality \eqref{eq:korn} in the case of weighted $L^p$ spaces (see formula (5.19) in Theorem 5.17 there), but lacks its Orlicz version (see Theorem 6.13 there). In \cite{DieRuzSch10}, the step from weighted $L^p$ spaces to Orlicz growths follows from the (miraculous) extrapolation technique that originates from \cite{RdF84} by Rub\'io de Francia. In the Orlicz context it says, roughly speaking, that if, for a certain family $\FF$ of pairs $(f;g) \in \FF$,  an inequality holds  in a weighted $L^p$ space for any weight in a Muckenhoupt class $A_p$, then it holds for  $(f;g) \in \FF$ also for (sufficiently regular) Orlicz growths\footnote{Since \emph{there are no Banach function spaces, only weighted $L^2$}, compare Cruz-Uribe, Martell \& Perez \cite{C-UMarPer11}, p. 14}, see \cite{DieRuzSch10}, Proposition 6.1. Hence, the extrapolation-based step from weighted-$L^p$ formula (5.19) in Theorem 5.17 to its Orlicz counterpart would basically consist in choosing  $\FF$ appropriately. Below, we present a proof that does not involve extrapolation.
\begin{proof}
Using monotonicity, convexity and $\Delta_2$ condition, we have
\begin{equation}\label{eq:korn1}
\int_{B_r} \vfi (|\nabla u|) \le C (\Delta_2 (\vfi)) \int_{B_r} \vfi (|\nabla u - (\nabla u)|) ) + \vfi (| (\nabla u)|)  \le C (\Delta_2 (\vfi, \vfi^*)) \int_{B_r} ( \vfi (|\D u - (\D u)|)   + \vfi (| (\nabla u)|), 
\end{equation}
where the second inequality follows from Korn's inequality for oscillations, see Theorem 6.13 of  \cite{DieRuzSch10}. Now it suffices to deal with $ \vfi (| (\nabla u)|)$. To this end let us define $h$ as
\[
h (t) := \vfi (t^\frac{1}{q}).
\]
We need two facts on $h$. 
The first  one is that we can find such $q_0 > 1$ that $h$ is convex, because for $s = t^\frac{1}{q}$
\[
h'' (t) = q^{-2}  \vfi'' (s)  s^{2-2q} + q^{-1}  (q^{-1} -1)  \vfi' (s)  s^{1-2q} \ge \vfi'' (s)  s^{2-2q} \left[  q^{-2}   + C (G (\vfi'))  q^{-1}  (q^{-1} -1)  \right]
\]
and the term in the square brackets above can be made positive by choosing $q = q_0$ sufficiently close to $1$.
The second one is that 
\begin{equation}\label{eq:korn2}
t = \vfi ((h^{-1} (t))^\frac{1}{q} )
\end{equation}
since $s = \vfi ( \vfi^{-1} (s)) = h ( (\vfi^{-1})^q (s))$.
We are ready to deal with $ \vfi (| (\nabla u)|)$ on r.h.s. of \eqref{eq:korn1}
\[
 \vfi (| (\nabla u)|) \le  \vfi \left( \left( \dashint_{B_r} |\nabla u|^{q_0} \right)^\frac{1}{q_0} \right) \le   \vfi  \left( C \left( \dashint_{B_r} \left(  |\D u|^{q_0} +   \frac{|u - (u)|^{q_0}}{r^{q_0}} \right) \right)^\frac{1}{q_0}  \right) =: I
\]
The first inequality above follows from Jensen's inequality and monotonicity of $\vfi$, the second one from the Korn's inequality for Lebesgue spaces (see for instance formula (5.19) in Theorem 5.17 with $w \equiv 1$). Next, we plug $h^{-1} \circ h \equiv 1$ into $I$ and use convexity of $h$ to get
\begin{multline*}
I =   \vfi  \left( C \left(h^{-1} \circ h \left(  \dashint_{B_r} \left(  |\D u|^{q_0} +   \frac{|u - (u)|^{q_0}}{r^{q_0}} \right) \right) \right)^\frac{1}{q_0}  \right)  \le  C \vfi  \left(  \left(h^{-1} \left(  \dashint_{B_r} h \left(   |\D u|^{q_0} +   \frac{|u - (u)|^{q_0}}{r^{q_0}} \right) \right) \right)^\frac{1}{q_0}  \right) \\ 
= C \dashint_{B_r} h \left(   |\D u|^{q_0} +   \frac{|u - (u)|^{q_0}}{r^{q_0}} \right) \le C \dashint_{B_r} h \left(   |\D u|^{q_0} \right)  +h \left(     \frac{|u - (u)|^{q_0}}{r^{q_0}} \right),
\end{multline*}
the second equality is given by \eqref{eq:korn2} and the last inequality follows from convexity and validity of $\Delta_2$ condition for $h$ (see its definition). Putting together the estimates involving $I$ we have via definition of $h$
\[
 \vfi (| (\nabla u)|) \le C \dashint_{B_r} h \left(   |\D u|^{q_0} \right)  +h \left(     \frac{|u - (u)|^{q_0}}{r^{q_0}} \right) = C \dashint_{B_r} \vfi \left(   |\D u| \right)  + \vfi \left(     \frac{|u - (u)|}{r} \right),
\]
with $C$ depending only on $G (\vfi')$. The above estimate used in  \eqref{eq:korn1} implies thesis.
\end{proof}

\subsection{Examples of admissible growths} \label{subs:disc}
\[
 \A^1 (Q) := (\mu + |Q|^{p-2} ) Q,  \qquad  \A^2 (Q) := (\mu + |Q|^2)^\frac{p-2}{2} Q
\]
with $\mu \ge 0$, $p > 1$,  provide us with the (symmetric) $p$-Laplacian prototypes. Tensors $\A^1, \A^2$ are given by the following $p$-potentials 
\[
\vfi^1 (t) = \int_0^t \! (\mu + s^{p-2}) \, s \,ds,  \qquad  \vfi^2 (t) = \int_0^t \!(\mu + s^2)^\frac{p-2}{2} \, s \,ds
\] 
The respective square root tensors $\V$ read
\[ 
\V^1 (Q) = \sqrt{\mu + |Q|^{p-2}} Q , \quad   \V^2 (Q) = (\mu + |Q|^2)^\frac{p-2}{4} Q
\]
An example of an  admissible potential connected with  a non-polynomial growth reads
\[
   \vfi^3 (t) = \int_0^t (\mu +s)^{p-2} s \ln (e+s) ds
   \]
where the  associated tensors are given by
\[
 \A^3 (Q)= (\mu + |Q|)^{p-2} \ln (e+|Q|) Q ,  \quad \V^3 (Q) = (\mu + |Q|)^\frac{p-2}{2}  \sqrt{ \ln (e+|Q|)} Q.
   \]
Unfortunately, already the $\Delta_2$-condition, widely used in the regularity theory for systems with generalized growths, excludes exponential or $L\log{L}$ growths, that are of some interest from the perspective of applications.

\subsection*{Acknowledgement}
JB is partially supported by the National Science Centre (NCN) grant no. 2011/01/N/ST1/05411.



\bibliographystyle{abbrv}

\begin{thebibliography}{10}
 





\bibitem{Vei09a} Beir\~ao  da Veiga, H., 
 {\em Navier-Stokes equations with shear-thickening viscosity. Regularity up to the boundary.} J. Math. Fluid Mech. 11 (2009), no. 2, 233 -- 257.

\bibitem{Vei09b} Beir\~ao  da Veiga, H., 
 {\em Navier-Stokes equations with shear thinning viscosity. Regularity up to the boundary}. J. Math. Fluid Mech. 11 (2009), no. 2, 258 -- 273

 \bibitem{BdV}
\newblock  Beir\~ao da Veiga, H.,
\newblock {\em  On the global $W^{2,q}$ regularity for nonlinear $N$-systems of the $p$-Laplacian type in $n$ space variables},
\newblock Nonlinear Anal. 75, no. 11, 2012, 4346 -- 4354.


\bibitem{daVCri13dcds}
Beir\~ao  da Veiga, H.,  Crispo, F., {\em On the global regularity for nonlinear systems of the
$p$-Laplacian type},  Discrete Contin. Dyn. Syst. Ser. S 6, no. 5, 2013, 1173 -- 1191.





\bibitem{VeiKapRuz11}
Beir\~ao  da Veiga, H., Kaplick\'y, P., R\r u\v zi\v cka, M., 
{\em  Boundary regularity of shear thickening flows}, J. Math. Fluid Mech. 13, no. 3, 2011, 387 -- 404.


\bibitem{Burphd}
\newblock Burczak, J.,
\newblock  \emph{Regularity of solutions to nonlinear non-diagonal evolutionary systems},
\newblock PhD dissertation, Warsaw, 2015.


  \bibitem{Bur14}
  \newblock Burczak, J.,
 \newblock \emph{ Almost everywhere H\"older continuity of gradients to non-diagonal parabolic systems},
   \newblock manuscripta math. 144, no. 1-2, 2014, 51-- 90.



\bibitem{CriGri08}
Crispo, F., Grisanti, C., {\em On the existence, uniqueness and $C^{1, \gamma} (\Omega) \cap W^{2,2} (\Omega)$ regularity for a class of shear-thinning fluids}, J. Math. Fluid Mech. 10, no. 4, 2008, 455 -- 487.

\bibitem{C-UMarPer11}
   Cruz-Uribe, D., Martell, J., P\'erez, C., {\em Weights, extrapolation and the theory of Rubio de Francia}, Operator Theory: Advances and Applications, 215. BirkhŠuser/Springer Basel AG, Basel, 2011
   
   
\bibitem{DieEtt08}
\newblock Diening, L., Ettwein F.,
\newblock \emph{Fractional estimates for non-differentiable elliptic systems with general growth},
\newblock  Forum Math. 20, no. 3, 2008,  523 -- 556.


\bibitem{DiB93}
\newblock DiBenedetto, E.,
\newblock \emph{Degenerate parabolic systems},
\newblock  Springer 1993

\bibitem{DieKap13}
\newblock Diening, L., Kaplick\'y, P.,
\newblock \emph{$L^q$ theory for a generalized Stokes system},
\newblock  manuscripta math. 141, no. 1-2, 2013, 333 -- 361.

\bibitem{DieKapSch12}
\newblock Diening, L., Kaplick\'y, P., Schwarzacher, S.,
\newblock \emph{BMO estimates for the $p$-Laplacian},
\newblock Nonlinear Analysis TMA 75, 2, 2012, 637 -- 650



\bibitem{[DSV]}  Diening, L., Stroffolini, B., Verde A.,  {\em The $\phi$-harmonic approximation and the regularity of $\phi$--harmonic maps}, J. Differential Equations 253, no. 7, 2012, 1943 -- 1958



\bibitem{DieRuzSch10}
\newblock Diening, L., R\r u\v zi\v cka, M., Schumacher K.,
\newblock {\em A decomposition technique for John domains},
\newblock  Ann. Acad. Sci. Fenn. Math. 35, 2010, 87 -- 114
  

\bibitem{drw}
\newblock Diening, L.,  R\r u\v zi\v cka, M., Wolf J.,
\newblock \emph{Existence of weak solutions for unsteady motion of generalized Newtonian fluids},
\newblock \emph{Ann. Scuola Norm. Sup. Pisa.} 5, 9, no. 1, 2010, 1 -- 46

\bibitem{Don}
\newblock Donaldson, T.,
\newblock \emph{Inhomogeneous {O}rlicz-{S}obolev spaces and nonlinear parabolic initial value problems},
\newblock J. Differential Equations 16, 201 -- 256, 1974



\bibitem{ElmMes05}
\newblock Elmahi, A., Meskine, D.,
\newblock \emph{Parabolic equations in Orlicz spaces},
\newblock J. London Math. Soc. 72, 2, 2005, 410 -- 428

\bibitem{FreSchARXIV} Frehse, J., Scharzacher, S.,
 \emph{On regularity of the time derivative for degenerate parabolic systems}, arXiv:1501.04876 



\bibitem{FucSer}
Fuchs, M., Seregin, G., \emph{Global nonlinear evolution problem for generalized Newtonian fluids: local initial regularity of the strong solution}, Comput. Math. Appl. 53 (2007), no. 3-4, 509 -- 520. 


\bibitem{GiaSurVes10}
Gianazza, U., Surnachev, M., Vespri, V.,
\emph{A new proof of the Hšlder continuity of solutions to p-Laplace type parabolic equations}, 
Adv. Calc. Var. 3 (2010), no. 3, 263 -- 278. 

\bibitem{KufJohFuc77}
\newblock John, O., Kufner, A.,  Fu\v cik, S.,
\newblock Function spaces,
\newblock \emph{Academia} Prague, 1977.


  

\bibitem{Kap05}
\newblock Kaplick\'y, P.,
\newblock \emph{Regularity of flows of a non-Newtonian fluid subject to Dirichlet boundary conditions},
\newblock . Z. Anal. Anwendungen 24, no. 3, 2005, 467 -- 486.




\bibitem{Kap08}
Kaplick\'y, P., {\em Regularity of flow of anisotropic fluid}, J. Math. Fluid Mech. 10, no. 1, 2008, 71 -- 88.


\bibitem{KapMalSta02}
\newblock Kaplick\'y, P., M\'alek, J., Star\'a, J.,
\newblock \emph{Global-in-time Hšlder continuity of the velocity gradients for fluids with shear-dependent viscosities},
\newblock  NoDEA  9, no. 2, 2002, 175 -- 195






\bibitem{LadSolUra68}
\newblock  Ladyzhenskaya, O., Solonnikov, V., Ural'tseva, N.,
\newblock \emph{Linear and quasilinear equations of parabolic type},
\newblock  American Mathematical Society, 1968


\bibitem{MNRR}
\newblock M\'alek J., Ne\v cas, J., Rokyta, M., R\r u\v zi\v cka, M.,
\newblock \emph{Weak and measure-valued solutions to evolutionary {PDE}s}.
\newblock Chapman \& Hall, London, 1996




\bibitem{MalNecRuz01}  
M\'alek J., Ne\v cas, J., R\r u\v zi\v cka, M.,
\newblock \emph{ On weak solutions to a class of non-Newtonian incompressible fluids in bounded three-dimensional domains: the case $p \ge 2$},
\newblock  Adv. Differential Equations 6, no. 3, 2001, 257 -- 302

 \bibitem{Mus83} Musielak, J.,  {\em  Orlicz spaces and modular spaces}
Spinger, 1983



   
   \bibitem{NauWolWol98} Naumann, J.,  Wolf, J.,  Wolff, M., {\em On the Hšlder continuity of weak solutions to nonlinear parabolic systems in two space dimensions.},  Comment. Math. Univ. Carolin. 39, no. 2, 1998. 237 -- 255
   
      \bibitem{RdF84}
   Rubio de Francia, J., {\em Factorization theory and $A_p$ weights}. - Amer. J. Math. 106:3, 1984, 533 -- 547.
   

 \bibitem{RaoRen91} Rao, M., Ren, Z., {\em  Theory of Orlicz spaces}, Monographs and Textbooks in Pure and Applied Mathematics 146, Marcel Dekker, 1991

  
 \bibitem{RaoRen02} Rao, M., Ren, Z., {\em  Applications of Orlicz spaces}, Monographs and Textbooks in Pure and Applied Mathematics,250. Marcel Dekker, New York, 2002
 
  

\bibitem{[SL]}  Seregin, G.,  Ladyzhenskaya O., {\em On partial regularity of suitable weak solutions to the three-dimensional Navier-Stokes equations}, J. Math. Fluid Mech. 1, no. 4,  1999, 356 -- 387

\bibitem{SwG2014} \'Swierczewska-Gwiazda, A. {\em Anisotropic parabolic problems with slowly or rapidly growing terms}, Colloq. Math. 134 (2014), no. 1, 113 -- 130.




\bibitem{Tol83} Tolksdorf, P.,  {\em Everywhere-regularity for some quasilinear systems with a lack of ellipticity},  Ann. Mat. Pura Appl. 134, no. 4, 1983, 241 -- 266



\bibitem{Uhl77}  Uhlenbeck, K., {\em Regularity for a class of non-linear elliptic systems}, Acta Math. 138, no. 3-4, 1977, 219 -- 240







 \end{thebibliography}

\end{document}